\documentclass[a4paper]{amsart}
\setlength{\marginparwidth}{1.2in}
\usepackage{amssymb,amsfonts}
\usepackage[all,arc]{xy}
\usepackage{enumerate}
\usepackage{mathrsfs}
\usepackage{tikz}
\usetikzlibrary{arrows,snakes,backgrounds}
\usepackage{color}   
\usepackage{marginnote}
\usepackage{tikz-cd}
\usetikzlibrary{positioning}
\usepackage{enumitem}
\usepackage{hyperref}
\usepackage{MnSymbol} 
\hypersetup{
    colorlinks=true, 
    linktoc=all,     
    citecolor=black,
    filecolor=black,
    linkcolor=blue,
    urlcolor=black
}
\usepackage{cleveref}
\usetikzlibrary{arrows}

\listfiles
\newtheorem{thm}{Theorem}[section]
\newtheorem{cor}[thm]{Corollary}
\newtheorem{prop}[thm]{Proposition}
\newtheorem{lem}[thm]{Lemma}

\newtheorem{quest}[thm]{Question}

\newtheorem*{openproblem*}{Problem}
\newtheorem*{quest*}{Question}
\newtheorem*{problem*}{Problem}

\theoremstyle{definition}
\newtheorem{defn}[thm]{Definition}

\theoremstyle{remark}
\newtheorem{rem}[thm]{Remark}

\newcommand{\bR}{\mathbb{R}}

\newcommand{\bZ}{\mathbb{Z}}

\newcommand\Diff{\mathrm{Diff}}

\newcommand{\hcoker}{/\!\!/}

\newcommand{\tH}{\text{\textnormal{Homeo}}}
\newcommand{\BH}{\mathrm{B}\text{\textnormal{Homeo}}}

\newcommand{\tdH}{\text{Homeo}^{\delta}}

\newcommand{\BdH}{\mathrm{B}\text{\textnormal{Homeo}}^{\delta}}

%

\addtocontents{toc}{\protect\setcounter{tocdepth}{1}}
\makeatletter
\let\c@equation\c@thm
\makeatother
\numberwithin{equation}{section}
\bibliographystyle{plain}
\title{A local to global argument on low dimensional manifolds}

\author{Sam Nariman}
\email{sam@math.northwestern.edu}
\address{Department of Mathematics\\
  Northwestern University\\
2033 Sheridan Road\\
Evanston, IL  60208}

\begin{document}

\begin{abstract}
For   an oriented manifold $M$ whose dimension is less than $4$, we use the contractibility of certain complexes associated to its submanifolds to cut $M$ into simpler pieces in order to do  local to global arguments. In particular, in these dimensions, we give a different proof of a deep theorem of Thurston in foliation theory  that says the natural map between classifying spaces $\BdH(M)\to \BH(M)$ induces a homology isomorphism where $\tdH(M)$ denotes the group of homeomorphisms of $M$ made discrete. Our proof shows that in low dimensions,  Thurston's theorem can be proved without using foliation theory. Finally, we show that this technique gives a new perspective on the homotopy type of homeomorphism groups in low dimensions. In particular, we give a different proof of Hacher's theorem that the homeomorphism groups of Haken $3$-manifolds with boundary are homotopically discrete without using his disjunction techniques.
\end{abstract}
\maketitle
\section{Introduction}
Often, in h-principle type theorems (e.g. Smale-Hirsch theory), it is easy to check that the statement holds for the open disks (local data) and then one wishes to glue them together to prove that the statement holds for closed compact manifolds (global statement). But there are cases where one has a local statement for a closed disk relative to the boundary. To use such local data to great effect, instead of covering the manifold by open disks, we use certain ``resolutions" associated to submanifolds (see \Cref{sec2}) to cut the manifold into closed disks.
\subsection{Thurston's h-principle theorem for $C^0$-foliated bundles}  The main example that led us to such a local to global argument comes from foliation theory. Let $\tH(D^n,\partial D^n)$ denote the group of compactly supported homeomorphisms of the interior of the disk $D^n$ with the compact-open topology. By the Alexander trick, we know that the group $\tH(D^n,\partial D^n)$  is  contractible for all $n$. Let $\tdH(D^n,\partial D^n)$ denote  the same group as $\tH(D^n,\partial D^n)$ but with the discrete topology. By an infinite repetition trick due to   Mather (\cite{MR0288777}), it is known that $\BdH(D^n,\partial D^n)$ is acyclic i.e. its reduced homology groups vanish. Therefore, the natural map
 \[
 \BdH(D^n,\partial D^n)\to \BH(D^n,\partial D^n)
 \]
 induced by the identity homomorphism is in particular a homology isomorphism. Thurston generalized Mather's work (\cite{MR0356085}) on foliation theory in \cite{thurston1974foliations} and as a corollary he obtained  the following surprising result.
   \begin{thm}[Thurston]\label{Main}
  For a compact closed  connected manifold $M$, the map 
  \[
  \eta\colon \BdH(M)\to \BH(M),
  \]
  induces an isomorphism on homology.
  \end{thm}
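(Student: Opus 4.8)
The plan is to prove the statement in the range $\dim M\le 3$ (with $M$ orientable) by a local‑to‑global induction whose base case is the disk relative to its boundary — where the map is already a homology isomorphism, since $\BdH(D^n,\partial D^n)$ is acyclic by Mather's infinite repetition trick and $\BH(D^n,\partial D^n)$ is contractible by the Alexander trick — and whose inductive step cuts $M$ along properly embedded disks (arcs when $\dim M=2$) using the contractible complexes of submanifolds from \Cref{sec2}. One first reduces to the case that $M$ is compact with non-empty boundary (for closed $M$, the same cutting scheme applied with codimension‑one submanifolds first produces bordered pieces), and then inducts on a complexity invariant $c(M)$ — for instance built from a handle decomposition, or $-\chi$ in the surface case — chosen so that cutting along one essential properly embedded disk strictly decreases $c$; after finitely many such cuts every piece is a disjoint union of disks, for which the statement holds by the base case together with the K\"unneth theorem.

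For the inductive step, let $\mathcal{R}_\bullet(M)$ be the semi‑simplicial space of systems of disjoint essential properly embedded disks in $M$ supplied by \Cref{sec2}; its key property is that the augmentation $|\mathcal{R}_\bullet(M)|\to\ast$ is a weak equivalence, which is exactly the contractibility of the associated complex of submanifolds and is where the hypothesis $\dim M<4$ is used. Both $\tH(M)$ and $\tdH(M)$ act on $\mathcal{R}_\bullet(M)$, compatibly with the identity homomorphism $\tdH(M)\to\tH(M)$. Since $|\mathcal{R}_\bullet(M)|$ is weakly contractible, its Borel construction along any group action is weakly equivalent to the classifying space of that group, so we obtain weak equivalences $\BH(M)\simeq |\mathcal{R}_\bullet(M)|\hcoker\tH(M)$ and $\BdH(M)\simeq |\mathcal{R}_\bullet(M)|\hcoker\tdH(M)$ under which $\iota$ becomes the evident map between them. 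Hence it suffices to show this latter map is a homology isomorphism, and for that we compare the two semi‑simplicial spaces $\mathcal{R}_\bullet(M)\hcoker\tH(M)$ and $\mathcal{R}_\bullet(M)\hcoker\tdH(M)$ degreewise.

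In simplicial degree $p$, transitivity of the $\tH(M)$‑action on each isotopy type of $(p+1)$‑disk system — which follows from the isotopy extension theorem and its $C^0$ counterpart, exhibiting a fibration over the space of such configurations — gives a decomposition $\mathcal{R}_p(M)\hcoker\tH(M)\simeq \coprod_{[\sigma]}\BH(M_\sigma)$, the union over isotopy classes $\sigma$ of $(p+1)$‑disk systems, where $M_\sigma$ is $M$ cut open along $\sigma$: a disjoint union of compact manifolds each of complexity $<c(M)$, with the new faces designated as "boundary" in the rel‑boundary sense. The analogous decomposition $\mathcal{R}_p(M)\hcoker\tdH(M)\simeq\coprod_{[\sigma]}\BdH(M_\sigma)$ holds with the discrete topology, and the degree‑$p$ comparison map is $\coprod_{[\sigma]}\bigl(\BdH(M_\sigma)\to\BH(M_\sigma)\bigr)$. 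By the inductive hypothesis each summand is a homology isomorphism, so the map of semi‑simplicial spaces is a degreewise homology isomorphism; since these are proper ("good") semi‑simplicial spaces, the spectral sequence of a semi‑simplicial space shows that the induced map on geometric realizations is a homology isomorphism, completing the induction.

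The main obstacle is supplying and then exploiting the input of \Cref{sec2}: one needs, for every compact $M$ with $\dim M\le 3$ and non-empty boundary, a semi‑simplicial space of disjoint essential properly embedded disks with weakly contractible realization, on which the homeomorphism group acts transitively within each isotopy type and with \emph{point-set} stabilizers that are genuinely the homeomorphism group of the cut‑open manifold relative to the new boundary. Getting the stabilizers right means handling collars and the fibration property of the restriction map, and in particular replacing "fix the disk system setwise" by "fix it pointwise" — typically by decorating the disks with parametrizations, or via a secondary resolution — while the contractibility statement itself genuinely fails once $\dim M\ge 4$, so the argument cannot be pushed beyond the range in which the relevant arc/disk complexes are contractible. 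A further, more bookkeeping‑level point is arranging the complexity invariant so that one essential cut strictly decreases it, keeping the induction well‑founded.
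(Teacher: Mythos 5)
Your strategy for surfaces is essentially the paper's: resolve $\BH_0$ and $\BdH_0$ by a semisimplicial space of handles/arcs with weakly contractible realization, identify the degreewise pieces with classifying spaces of the cut-open manifold, and run the comparison spectral sequence down to Mather's theorem for the disk. Two points, one fatal for dimension $3$ as you have written it, one a gap in the comparison step.

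The fatal point: you propose to cut every compact bordered $3$-manifold along \emph{essential properly embedded disks} until only balls remain. Such disks exist only when $\partial M$ is compressible; for an irreducible $3$-manifold with incompressible boundary (a knot exterior, or a generic piece of the Haken hierarchy) your complex $\mathcal{R}_\bullet(M)$ is empty and the induction cannot start. The paper's actual route through dimension $3$ is unavoidable in some form: first reduce to connected sums of irreducible pieces (cutting solid tori out of $S^1\times S^2$ summands, where codimension $2$ makes transversality give disjointness for free), then cut along a system of separating spheres realizing the prime decomposition, then reduce sphere boundary components via $1$-handles, and only then run the Haken hierarchy by cutting along \emph{incompressible surfaces of arbitrary genus}. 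Contractibility of these complexes is the hard content: the sphere-system case needs irreducibility of the summands to produce the ball one surgers across, and the incompressible-surface case needs Laudenbach's covering-space argument producing a trivial h-cobordism in place of a Whitney ball. None of this is subsumed by a disk complex.

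The second point: you write the degreewise comparison as a direct map $\coprod_{[\sigma]}(\BdH(M_\sigma)\to\BH(M_\sigma))$. In the paper there is no direct semisimplicial map from the discrete resolution to the topological one: the discrete complexes must be built from \emph{germs} of handles around their cores (disjointness of cores, not of whole handles, is all that the transversality-and-surgery argument can arrange), and the stabilizer of a germ under $\tdH_0(M)$ is the group of homeomorphisms compactly supported in the complement of the cores, not $\tdH_0(M_\sigma,\partial M_\sigma)$. Bridging these requires the zig-zag through the auxiliary complex of honest embeddings with the discrete topology together with the collar-pushing homology isomorphism of \cite[Corollary 2.3]{nariman2014homologicalstability}; your ``decorate with parametrizations or use a secondary resolution'' gestures at the topological side but not at this discrete-side issue, which is where an actual input beyond formal manipulation is consumed. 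Finally, two smaller corrections: the paper reduces closed $M$ to the bordered case by removing codimension-zero disks ($0$-handles), not codimension-one submanifolds; and in dimensions $\geq 5$ the handle complexes are in fact contractible below the middle dimension (the obstruction is that the surgery argument is not known above it), so it is not accurate to say contractibility ``genuinely fails'' for $\dim M\ge 4$.
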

In this paper, we give a proof of this theorem when $\text{dim}(M)\leq 3$. Our proof is inspired by Jekel's calculation of the group homology of $\tdH(S^1)$ in \cite[Theorem 4]{MR2871163}. The first  proof of Thurston's theorem in the literature in all dimensions was given by McDuff following Segal's program in  foliation theory (see \cite{mcduff1980homology, segal1978classifying}). Thurston in fact proved a more general homology h-principle theorem for foliations such that \Cref{Main} is just its consequence for $C^0$-foliations. 

Mather and Thurston used foliation theory to study the homotopy fiber of $\eta$. To briefly explain their point of view, let us recall the notion of Haefliger groupoid. Haefliger defined a topological groupoid $\Gamma_q^r$ whose space of objects is $\bR^q$ with the usual topology and the space of morphisms between two points is given by germs of $C^r$-diffeomorphisms sending $x$ to $y$ (see \cite[Section 1]{haefliger1971homotopy} for more details). The homotopy type of the classifying space of this groupoid, $\mathrm{B}\Gamma_q^r$, plays an important role in the classification of $C^r$-foliations (see \cite{MR0370619} and \cite{MR0425985}). One of Thurston's deep theorem in  foliation theory relates the homotopy type of $\mathrm{B}\Gamma_q^r$ to the group homology of $C^r$-diffeomorphism groups made discrete. For $r=0$, he first uses Mather's theorem (\cite{MR0288777}) to show that $\mathrm{B}\Gamma_q^0$ is weakly equivalent to the classifying space of rank $q$ microbundles, $\mathrm{B} \text{Top}(q)$, and as a consequence he deduces that the map $\eta$ in \Cref{Main} is in fact acyclic, in particular, its homotopy fiber has vanishing reduced homology groups. 

In fact there are general h-prinicple theorems in all dimensions that identify the homotopy fiber of $\eta$ as  a subspace of certain section spaces of a bundle associated to the manifold $M$. Our goal in this paper is to show that in low dimensions one can directly study the maps between classifying spaces like $\eta$ instead of their homotopy fibers. To do so, we provide the strategy in detail for \Cref{Main} in low dimensions that does not use any foliation theory.

\subsection{On homotopy type of $\tH_0(M)$} Using this technique, we also give a different proof of the contractibility of the identity component of homeomorphism groups in low dimensions. 
\begin{thm}[Earle-Eells-Schatz, Hatcher] The identity components of homeomorphism groups of hyperbolic surfaces (see \cite{MR0277000, earle1969fibre}) and Haken manifolds with boundary (see \cite{MR0420620}) are contractible.
\end{thm}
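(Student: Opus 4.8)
The plan is to induct on the complexity of $M$ --- for a surface with boundary, the quantity $-\chi(M)$; for a Haken $3$--manifold with boundary, the length of a Haken hierarchy, formed after first splitting along the $2$--spheres of a prime decomposition. It is enough to treat the group $\Diff(M,\partial M)$ of diffeomorphisms that are the identity near $\partial M$, since $\Diff(M)$ is recovered by combining this with the restriction fibration $\Diff(M,\partial M)\to\Diff(M)\to\Diff(\partial M)$ and the known low--dimensional homotopy type of $\Diff(\partial M)$; and it in turn suffices to show that $\Diff(M,\partial M)$ is homotopically discrete, i.e.\ that $\Diff(M,\partial M)\to\pi_0\Diff(M,\partial M)$ is a weak equivalence, since (the components of a topological group being mutually homeomorphic) this is equivalent to the contractibility of the identity component. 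The base of the induction is the disk $D^n$ for $n=2,3$, where $\Diff(D^n,\partial D^n)$ is contractible by Smale and by Hatcher respectively --- facts we are permitted to assume.

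For the inductive step, fix $M$ of positive complexity and a type $\tau$ of cutting submanifold that strictly decreases complexity: a non-separating properly embedded arc, or a complexity--reducing arc between two boundary components, when $M$ is a surface; an essential embedded $2$--sphere at the prime--decomposition stage, and an incompressible, boundary--incompressible surface drawn from the hierarchy, when $M$ is Haken. Assemble these into a semi-simplicial space $R_\bullet$ with $R_p$ the space of $(p+1)$--tuples of pairwise disjoint submanifolds of type $\tau$, with faces given by deletion and with the evident action of $\Diff(M,\partial M)$. The input supplied by \Cref{sec2} is that the realization $\|R_\bullet\|$ is weakly contractible --- this is the \emph{contractibility of the complex associated to the submanifolds}: contractibility of the arc complex for surfaces, and of the sphere complex together with the appropriate complex of incompressible surfaces for $3$--manifolds. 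Because $\|R_\bullet\|$ is contractible, its homotopy quotient by $\Diff(M,\partial M)$ is weakly equivalent to $B\Diff(M,\partial M)$, and realizing this quotient level by level exhibits $B\Diff(M,\partial M)$ as the realization of $[p]\mapsto R_p\hcoker\Diff(M,\partial M)$. By the isotopy extension theorem each level is, up to homotopy, a disjoint union over $\Diff(M,\partial M)$--orbits of $p$--simplices $\sigma$ of the classifying spaces $B\Diff(M_\sigma,\partial M_\sigma)$, where $M_\sigma$ denotes $M$ cut along $\sigma$ --- a manifold of strictly smaller complexity.

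By the inductive hypothesis each $\Diff(M_\sigma,\partial M_\sigma)$ is homotopically discrete, so $B\Diff(M,\partial M)$ becomes the realization of a semi-simplicial space whose $p$-th level is a disjoint union of classifying spaces of the mapping class groups $\pi_0\Diff(M_\sigma,\partial M_\sigma)$. Comparing this resolution with the analogous one built from $\pi_0\Diff(M,\partial M)$ acting on the corresponding combinatorial complex --- whose realization is contractible for the same reason as in \Cref{sec2} --- identifies $B\Diff(M,\partial M)$ with $B\pi_0\Diff(M,\partial M)$, which closes the induction. Equivalently, one may run the argument through the isotopy--extension fibration $\Diff(M_N,\partial M_N)\simeq\Diff(M,\partial M;N)\to\Diff(M,\partial M)\to\Emb^{\tau}(M)$, using $\|R_\bullet\|$ to compute the component of $\Emb^{\tau}(M)$ containing $N$.

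The step I expect to be the real obstacle is this comparison at the end: the stabilizers occurring in the resolution are the \emph{full} diffeomorphism groups of the cut pieces, not their identity components, so one must track the mapping class groups along the face maps carefully and verify that the contractibility of $\|R_\bullet\|$ is strong enough to annihilate all of the higher homotopy of $\Diff(M,\partial M)$ and not merely its reduced homology. If only the homological statement is obtained it still suffices, because $B\Diff_0(M,\partial M)$ is simply connected (being the classifying space of a connected group), so acyclicity upgrades to contractibility by the Hurewicz and Whitehead theorems. In low dimensions the contractibility results of \Cref{sec2} are exactly what makes the strengthening go through: for surfaces the spaces $\Emb^{\tau}(M)$ are classical and visibly aspherical with the expected fundamental group, whereas for Haken $3$--manifolds it is precisely here that the prime decomposition and the full force of the hierarchy must be used.
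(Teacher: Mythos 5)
Your outline is essentially the paper's argument: a semisimplicial resolution of $\BDiff_0(M,\partial M)$ by tuples of disjoint cutting submanifolds (arcs/handle cores for surfaces, incompressible surfaces from the hierarchy for Haken manifolds), contractibility of the realization of the complex, identification of the levels via isotopy extension, induction on complexity down to the disk, and the Hurewicz--Whitehead upgrade from acyclicity to contractibility. However, the step you defer as ``the real obstacle'' is exactly where the proof needs a concrete input, and you have not supplied it. The stabilizer of a $p$-simplex $\sigma$ under the \emph{identity component} $\Diff_0(M,\partial M)$ is an extension of $\Diff_0(M_\sigma,\partial M_\sigma)$ by the kernel of $\pi_0\Diff(M_\sigma,\partial M_\sigma)\to\pi_0\Diff(M,\partial M)$; if that kernel were nontrivial, the level $R_p\hcoker\Diff_0(M,\partial M)$ would not be a disjoint union of acyclic classifying spaces and the induction would not close. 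The paper closes this with a specific injectivity statement: for surfaces, the map \ref{map} on mapping class groups induced by cutting along a non-separating arc is injective, and for Haken manifolds it invokes Laudenbach's theorem that $\pi_1(\Emb_{\partial}(\sigma_p,M))$ is trivial, which via the isotopy extension fibration forces $\pi_0(\Diff(M\backslash\sigma_p,\partial(M\backslash\sigma_p)))\hookrightarrow\pi_0(\Diff(M,\partial M))$. Your remark that the embedding spaces are ``visibly aspherical with the expected fundamental group'' gestures at this, but it is precisely the statement that must be proved or cited, not a consequence of the contractibility results of \Cref{sec2}.

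Two smaller points. First, once the stabilizers are known to be connected and acyclic, the $E^1$-page of the spectral sequence of the resolution collapses to the chain complex $\bZ[\text{orbits of $p$-simplices}]$ with the alternating-sum differential; since the action of $\Diff_0(M,\partial M)$ on the complex is \emph{not} transitive, the levels are not single classifying spaces, so your proposed comparison with a resolution of $\mathrm{B}\pi_0\Diff(M,\partial M)$ needs care. The paper instead identifies this orbit complex with the ordered configuration complex $\text{Conf}(\bullet)$ of points in an interval and proves its acyclicity directly via Moore's theorem and the contractibility of $|\text{Conf}(\bullet)|$; your route amounts to the same computation. Second, Haken manifolds are irreducible by definition, so no sphere/prime-decomposition stage enters this theorem; the spheres are only needed for the Thurston-theorem part of the paper on general $3$-manifolds.
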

\begin{rem}
The same statement holds for diffeomorphism groups. In dimensions less than $4$, it is known that for a compact manifold $M$, the group  $\tH_0(M)$ with the compact-open topology and $\Diff_0(M)$ with the $C^{\infty}$-topology have the same homotopy type. In $\text{dim}(M)=2$, it is a theorem of Hamstrom \cite{hamstrom1974homotopy} and in  $\text{dim}(M)=3$ it is a deep theorem of Cerf \cite[Theorem 8]{cerf1961topologie}. 
\end{rem}
\begin{rem}
Recall that $\tH(D^n, \partial D^n)$ is contractible for all $n$ and $\Diff(D^n, \partial D^n)$ is contractible for $n\leq 3$ (see \cite{MR0112149, hatcher1983proof}). One could also use this local data and the technique of this paper to reprove Hamstrom and Cerf's theorem that $\tH_0(M)$ and $\Diff_0(M)$ have the same homotopy type.
\end{rem}
Instead of working with the homeomorphism groups, we work with their classifying spaces. Considering the delooping of these topological groups has the advantage that one can apply homological techniques to the classifying spaces to extract homotopical information about homeomorphism groups. 

The reason that we restrict ourselves to low dimensions is that for surfaces and $3$-manifolds, there is a procedure to split up  the manifold into disks. For the surfaces, this procedure is given by cutting along handles. For $3$-manifolds, however, it is more subtle to cut it into simpler pieces. In that case, we use the prime decomposition theorem and Haken's hierarchy to cut the manifold into disks. 

\subsection{Outline} The paper is organized as follows: in \Cref{sec2}, we discuss the main idea and give a different model for the map $\eta$ which will be technically more convenient. In \Cref{disk}, we discuss the case where $M$ is a surface and  semi-simplicial resolutions for the classifying spaces of homeomorphisms of surfaces. In \Cref{balls},  We will treat the case of  $3$-manifolds. In \Cref{sec4}, we give a different proof of the contractibility of the identity component of the homeomorphism groups for certain low dimensional manifolds.

\subsection*{Acknowledgment} I am indebted to Allen Hatcher for his careful  reading and many helpful comments on different versions of this paper. I first aimed to prove Thurston's theorem without using foliation theory in all dimensions but Hatcher pointed out a flaw in my argument for manifolds of dimension larger than $4$ which made me focus only on low dimensional manifolds.  I would also thank Sander Kupers for his comments on  transversality issues for topological manifolds and answering my questions about topological embeddings. Finally, I am also grateful to S\o ren Galatius for his comments on the first draft of this paper, to Kathryn Mann for her comment on sphere systems, to Elmar Vogt and Reimer Backhaus for their interest in this paper and the referee for his/her careful reading and many helpful comments and critics that improved the exposition of the paper. The author was partially supported by NSF DMS-1810644.  

\section{Resolving classifying spaces by embedded submanifolds}\label{sec2}
Let us first sketch the idea for \Cref{Main}. Let $M$ be a smooth oriented closed manifold and let $\tH_0(M)$ denote  the identity component of the topological group $\tH(M)$. Since the group $\tH(M)$ is locally path connected and in fact it is locally contractible (see \cite{MR0259925}),  the group of connected components $\pi_0(\tH(M))$ is a discrete group and sits in a short exact sequence
\[
1\to \tH_0(M)\to \tH(M)\to \pi_0(\tH(M))\to 1. 
\]
Using the Serre spectral sequence, one could reduce  \Cref{Main} to proving that  the map $$\eta\colon\BdH_0(M)\to \BH_0(M),$$ induces a homology isomorphism. To prove this version, we want to inductively reduce \Cref{Main} to the case of a simpler manifold. Such simpler manifolds are obtained from $M$  by cutting along its submanifolds. 

Let $\phi$ be an embedding of a manifold into $M$. To cut along this embedding, we construct a  semisimplicial space $A_{\bullet}(M,\phi)$ on which the topological group $\tH_0(M)$ acts (see \cite{ebert2017semi} or \cite[Section 2]{randal2009resolutions} for definitions of (augmented) semisimplicial objects and their realizations\footnote{We shall use the same notation for the realizations of semisimplicial spaces and simplicial spaces in this paper.}). Similarly we construct a semisimplicial set $A^{\delta}_{\bullet}(M,\phi)$ as  the underlying semisimplicial set of the semisimplicial space $A_{\bullet}(M,\phi)$ on which the group $\tdH_0(M)$ acts. 
These semisimplicial spaces are constructed so that their realizations are weakly contractible. Therefore, we obtain semisimplicial resolutions \footnote{For a topological group $G$ acting on a topological space $X$, the homotopy quotient is denoted by $X\hcoker G$ and is given by $X\times_G \mathrm{E}G$ where $\mathrm{E}G$ is a contractible space on which $G$ acts freely.}
\[
|A^{\delta}_{\bullet}(M,\phi)\hcoker \tdH_0(M)|\xrightarrow{\simeq}\BdH_0(M),
\]
\[
|A_{\bullet}(M,\phi)\hcoker \tdH_0(M)|\xrightarrow{\simeq}\BH_0(M).
\] We then construct a zig-zag of maps from the space $A^{\delta}_{\bullet}(M,\phi)\hcoker \tdH_0(M)$ to the space $A_{\bullet}(M,\phi)\hcoker \tH_0(M)$ which induces a  commutative diagram 
 \[
 \begin{tikzpicture}[node distance=5.6cm, auto]
  \node (B)  {$H_*(|A^{\delta}_{\bullet}(M,\phi)\hcoker \tdH_0(M)|;\bZ)$};
  \node (C) [below of= B, node distance=1.6cm ] {$H_*(\BdH_0(M);\bZ)$};  
    \node (E) [right of= B] {$ H_*(|A_{\bullet}(M,\phi)\hcoker \tH_0(M)|;\bZ)$};
  \node (F) [right of= C ] {$H_*(\BH_0(M);\bZ).$};  
  \draw [->] (B) to node {$f_*$}(E);
  \draw [->] (C) to node {$\eta_*$}(F);
  \draw [->] (B) to node {$\cong$} (C);
  \draw [->] (E) to node {$\cong$} (F);

\end{tikzpicture}
\]
 Therefore, it is enough to prove that $f_*$ is an  isomorphism.  As we shall see in \Cref{high}, proving that $f_*$ induces a homology isomorphism is equivalent to the statement of \Cref{Main} for a manifold that is obtained from $M$ by cutting it along $\phi$. Then, by induction we can reduce \Cref{Main} to the case of a disk relative to its boundary that
\[
\BdH(D^n,\partial D^n)\to \BH(D^n,\partial D^n),
\]
induces a homology isomorphism (\cite{MR0288777}).

 We restricted ourselves to the case of closed oriented manifold $M$ of dimension less than $4$, because we still do not know how to make a certain surgery argument in \Cref{arc} work in  dimensions higher than $3$. 

To prove \Cref{Main} for closed manifolds, we need to work with manifolds with boundary, we first fix two notations to deal with homeomorphisms that are relative to the boundary.
\begin{defn}For an oriented manifold $M$ with (possibly non empty) boundary, we let $\tH(M,\partial M)$ and $\tH_{\partial}(M)$ be respectively the group of compactly supported orientation preserving homeomorphisms of $\text{int}(M)$, interior of $M$,  and  the group of  orientation preserving homeomorphisms that are the identity on the boundary with the compact open topology. 
\end{defn}

It is, however, technically more convenient to work with simplicial groups to avoid subtleties of working with  the topological group $\tH(M)$.  We shall define the corresponding simplicial groups.
\begin{defn}
  The set of the $p$-simplices of the simplicial group $S_{\bullet}(\tH_{\partial}(M))$, namely the singular  complex of $\tH(M)$, can be described as the commutative diagrams
    \[
     \begin{tikzpicture}[node distance=2cm, auto]
  \node (A) {$\Delta^p\times M$};
  \node (B) [right of=A] {$$};
  \node (C) [right of=B] {$\Delta^p\times M$};  
  \node (D) [below of=B, node distance=1.3cm] {$\Delta^p,$};
  \draw[->] (C) to node {$pr_1$} (D);
  \draw [<-] (D) to node {$pr_1$} (A);
    \draw [->] (A) to node {$\phi$} (C);
\end{tikzpicture}
    \]
where $\phi$ is  a homeomorphism which is the identity on $\Delta^p\times {\partial M}$. Similarly, one does define $S_{\bullet}(\tH_{0,\partial}(M))$ and $S_{\bullet}(\tH_0(M, \partial M))$.
\end{defn}
Using the theorem of Milnor (\cite{MR0084138}), we know that the geometric realization $|S_{\bullet}(\tH_{0,\partial}(M))|$ is a topological group which is weakly equivalent to $\tH_{0,\partial}(M)$. The composite of the maps
\[
\tdH_{0,\partial}(M)\to |S_{\bullet}(\tH_{0,\partial}(M))|\xrightarrow{\text{ev}}  \tH_{0,\partial}(M),
\]
where the first one is induced by the inclusion of the $0$-simplices and the second map is induced by the evaluation map $S_{\bullet}(\tH_{0,\partial}(M))\times \Delta^{\bullet}\to \tH_{0,\partial}(M)$ is the identity homomorphism. Therefore, the map $\eta$ is factored as 
\[
\BdH_{0,\partial}(M)\to \mathrm{B} |S_{\bullet}(\tH_{0,\partial}(M))|\xrightarrow{\simeq} \BH_{0,\partial}(M).
\]
So we reformulate \Cref{Main} as follows.
\begin{thm}\label{main}
For a compact oriented smooth manifold $M$ whose dimension is less than $4$, the map
\[
\eta\colon \BdH_{0,\partial}(M)\to \mathrm{B} |S_{\bullet}(\tH_{0,\partial}(M))|,
\]
induces a homology isomorphism. 
\end{thm}
\begin{rem}
The same statement holds for $\tH_{0,\partial}(M)$. Using the pushing collar technique (\cite[Corollary 2.3]{nariman2014homologicalstability} \footnote{This corollary that says certain pushing collar maps between diffeomorphism groups induce homology isomorphisms also works for homeomorphism groups.}), one can show that the map 
\[
\BdH_{0}(M,\partial M)\to \BdH_{0,\partial}(M),
\]
induces a homology isomorphism. On the other hand, since the space of collars for topological manifolds is contractible (see \cite{MR0266221}), the inclusion map $\tH_0(M,\partial M)\hookrightarrow \tH_{0,\partial}(M)$ is a weak equivalence (see \cite[Section 4.3]{kupers2015proving} for a similar discussion). Hence, \Cref{main} also implies that the map
\[
\eta\colon \BdH_{0}(M,\partial M)\to \mathrm{B} |S_{\bullet}(\tH_{0}(M,\partial M))|,
\]
induces a homology isomorphism. 
\end{rem}
 We want to cut up $M$ into disks in a ``contractible space of choices" (e.g. see \Cref{claim} and \Cref{arc}). As we shall see in \Cref{Jekel}, the easiest case is when $M$ is homeomorphic to a circle (see also \cite[Theorem 4]{MR2871163}). For $M$ being a surface, we define a certain space of handles to cut the surface along them. Finally if $M$ is a three manifold, we first reduce to the case of irreducible three manifolds and we cut it along incompressible surfaces in a ``contractible space of choices". For this reason, we consider the case of three manifolds separately. To cut along submanifolds, we need to consider ``nicely" embedded submanifolds. This is more essential in dimension higher than $2$. So let us recall the definition of locally flat embeddings.
 \begin{defn} The $k$ simplices of the simplicial set of {\it locally flat embeddings} $\text{Emb}^{\text{lf}}_{\bullet}(N,M)$ is given by the commutative diagram 
   \[
     \begin{tikzpicture}[node distance=2cm, auto]
  \node (A) {$\Delta^k\times N$};
  \node (B) [right of=A] {$$};
  \node (C) [right of=B] {$\Delta^k\times M$};  
  \node (D) [below of=B, node distance=1.3cm] {$\Delta^k,$};
  \draw[->] (C) to node {$pr_1$} (D);
  \draw [<-] (D) to node {$pr_1$} (A);
    \draw [->] (A) to node {$f$} (C);
\end{tikzpicture}
    \]
where $f$ is a homeomorphism onto its image which is also locally flat. To recall the condition of being locally flat,  let the codimension of the map $f$ be $p$. Then for all $(t, n) \in \Delta^k \times N$ there exist open neighborhoods $U$ and $V$ around $t$ and $n$ respectively such that there is a map $U\times V \times \bR^{p}\to \Delta^k\times N$ over $\Delta^k$ which extends $f|_{U\times V}$ and is a homeomorphism onto its image. If $N$ has a boundary, we consider those embeddings that restrict to locally flat embeddings of the interior and locally flat embeddings of the boundary.
 \end{defn} 
 \begin{rem} To cut  codimension $0$ submanifolds with boundary and obtain a manifold, we need to have a bicollared boundary. This is guaranteed by Brown's result \cite[Theorem 3]{brown1962locally} that locally flat two sided codimension 1 submanifolds are bicollared.  
 \end{rem}
\begin{rem}\label{emb}We can also consider the space of locally flat embeddings $\textit{Emb}^{\text{lf}}(N,M)$ as a subspace of embeddings of $N$ into $M$ with the compact-open topology. In codimension $0$ as in codimension $3$ and higher (see \cite[Appendix]{lashof1976} for the comparison between different versions of the embedding spaces), it is known that the realization of $\text{Emb}^{\text{lf}}_{\bullet}(N,M)$ has the same homotopy type as $\textit{Emb}^{\text{lf}}(N,M)$ and in fact in these cases $\text{Emb}^{\text{lf}}_{\bullet}(N,M)$ is equal to the singular set $S_{\bullet}(\textit{Emb}^{\text{lf}}(N,M))$.
\end{rem}
\section{Cutting surfaces into disks} \label{disk} In this section $M$ is an oriented surface with possibly nonempty boundary.
\subsection{$0$-handle resolutions}The first step is to reduce the statement of \Cref{main}  to the case of the surfaces with non-empty boundary so that one could  remove $1$-handles. Hence we first want to remove  disks ($0$-handles) from a closed surface $M$. To parametrize different choices of removing $0$-handles, we define the following semisimplicial spaces. 
\begin{defn} \label{def1}We first define the semisimplicial simplicial set on which the simplicial group $S_{\bullet}(\tH_{0,\partial}(M))$ acts.
\begin{itemize}[leftmargin=*]\item {\bf Topological versions:}
Let $[p]$ denote the set $\{0,1,...,p\}$ of $p+1$ ordered elements. 
\begin{itemize} 
\item Let $$ A_p(M)_{\bullet}=\text{\textnormal{Emb}}^{\text{lf}}_{\bullet}(\coprod_{[p]} D^2, M)$$ denote the simplicial set of locally flat embeddings  consisting of  orientation preserving  embeddings of $p$ disjoint closed unit $2$-disks into $M$.  The collection $A_{\bullet}(M)_{\bullet}$ is a semisimplicial simplicial set where the face maps in the semisimplicial direction are given by forgetting disks.  We shall write $A_p(M)$ for the realization of $ A_p(M)_{\bullet}$ in the simplicial direction. 

\item Let $A^{\bf t}_p(M)$  denote the space $\textit{Emb}^{\text{lf}}(\coprod_{[p]} D^2, M)$ equipped with the compact-open topology. By \Cref{emb}, the natural map $A_p(M)\to  A^{\bf t}_p(M)$ is a weak equivalence. Let $A^{{\bf t},\delta}_p(M)$ be the underlying set of the space $A^{\bf t}_p(M)$, in other words, we have $A^{{\bf t},\delta}_p(M)=A_p(M)_0$.

\item We also define an auxiliary semisimplicial simplicial set  $\overline{A}_{\bullet}(M)_k$ whose set of $0$-simplices in semisimplicial direction is the same as $A_0(M)_k$ but its $p$-simplices in semisimplicial direction have $k$-simplices  consisting of $(p+1)$-tuples $(\phi_0(t),\phi_1(t),\dots,\phi_p(t))$ of $k$-simplices of $A_0(M)_k$ where for all $t\in \Delta^k$ the centers of the embedded disks $\phi_i(t)$ are pairwise disjoint but the disks may overlap. We shall write $\overline{A}_p(M)$ for the realization of $ \overline{A}_p(M)_{\bullet}$ in the simplicial direction. 
\end{itemize}
\item {\bf Discrete version:} In the $0$-simplices  $ \text{\textnormal{Emb}}^{\text{lf}}_0(\coprod_{[p]} D^2, M)$  of the simplical set $A_p(M)_{\bullet}$, we say two embeddings $g_1$ and $g_2$  have the same germ if there exists an open neighborhood $U\subset D^2$ around the origin so that $g_1|_{\coprod_{[p]} U}= g_2|_{\coprod_{[p]} U}$. 
\begin{itemize}
\item Let
\[
A^{\delta}_{\bullet}(M)= \text{\textnormal{Emb}}_0^{\text{g}}(\coprod_{[\bullet]} D^2, M),
\]
denote the set of germs of embeddings of disjoint union of $p+1$ disks compatible with the orientation of $M$. 
\item We define an auxiliary semisimplicial set $\overline{A}^{\delta}_{\bullet}(M)$ which is given by $0$-simplices in the simplicial direction of the semisimplicial simplicial set $\overline{A}_{\bullet}(M)_{\bullet}$.
\end{itemize}
\end{itemize}
\end{defn}
The simplicial group $S_{\bullet}(\tH_{0, \partial}(M))$ acts on the simplical set $A_p(M)_{\bullet}$. To define the $0$-handle resolution of $\mathrm{B} |S_{\bullet}(\tH_{0, \partial}(M))|$, we need to consider the homotopy quotient of the action of $S_{\bullet}(\tH_{0,\partial}(M))$ on $A_p(M)$. To do so, we recall the two sided bar construction. Recall that for a group $G$ acting on a topological space $X$, the two sided bar construction $B_{\bullet}(X,G,*)=X\times G^{\bullet}$ is a simplicial space with the usual face maps and degeneracies. For a discrete group (or well-pointed topological group), the realization of this bar construction is a model for the homotopy quotient $X\hcoker G$. 
\begin{defn}\label{defn1}
We define the $0$-handle resolution $X_{\bullet, n,k}(M)$ to be an augmented semisimplicial bisimplicial set
\[
X_{\bullet, n,k}(M)=B_{n}(A_{\bullet}(M)_k,S_{k}(\tH_{0,\partial}(M)),*)\xrightarrow{\epsilon} B_{n}(*,S_{k}(\tH_{0,\partial}(M)),*).
\] 
We denote the realization of $X_{p, n,k}(M)$ in the bisimplicial directions by $X_p(M)$. The face maps of $X_p(M)$ are induced by the face maps of $A_p(M)$ in the semisimplicial direction. 
\end{defn}
Since in realizing a bisimplicial set, the order of realization in each direction, does not matter (\cite[Lemma, Page 10]{MR0338129}), the realization of the augmentation map $\epsilon$ is given by
\[
A_p(M)\hcoker |S_{\bullet}(\tH_{0,\partial}(M))|\xrightarrow{|\epsilon|} \mathrm{B}|S_{\bullet}(\tH_{0,\partial}(M))|\xrightarrow{\simeq}\BH_{0,\partial}(M).
\]
 The semisimplicial space $X_p(M)$ is called a semisimplicial resolution for  the classifying space $\BH_0(M,\partial M)$, because, as we shall see in \Cref{claim}, if we realize $X_p(M)$ in the semisimplicial direction we obtain a map
\[
||\epsilon||\colon |X_{\bullet}(M)|\to \mathrm{B}|S_{\bullet}(\tH_{0,\partial}(M))|,
\]
which turns out to be a weak equivalence. The fiber of the map $||\epsilon||$ is the realization $|A_{\bullet}(M)|$ of the semisimplicial simplicial set $A_{\bullet}(M)_{\bullet}$ which means first realizing in the simplicial direction to obtain a semisimplicial space and then realizing in the semisimplicial direction. Using Proposition 2.18 in \cite{kupers2015proving}, to prove that the map $||\epsilon||$ is a weak equivalence, it is enough to show that its fiber $|A_{\bullet}(M)|$ is contractible, as we shall prove in \Cref{claim1}.
\begin{rem}
A more geometric  model for this homotopy quotient is to consider the simplicial set $$\text{Emb}^{\text{lf}}_{\partial}(M;\bR^{\infty})\times_{S_{\bullet}(\tH_{0,\partial}(M))} A_p(M)_{\bullet},$$ but we do not need this model for our argument.
\end{rem}
On the other hand, the discrete group $\tdH_{0,\partial}(M)$ acts on $A_{\bullet}^{\delta}(M)$. So we define the semisimplicial resolution for $\BdH_{0,\partial}(M)$ as follows.
\begin{defn}\label{defn2}
The $0$-handle resolution for $\BdH_{0,\partial}(M)$ is the augmented semisimplicial space
\[
\theta\colon X_{\bullet}^{\delta}(M):= A_{\bullet}^{\delta}(M)\hcoker \tdH_{0,\partial}(M)\to \BdH_{0,\partial}(M).
\]
\end{defn}
Similarly, to prove that $|\theta|\colon |X_{\bullet}^{\delta}(M)|\to \BdH_0(M)$ is a weak equivalence, it is enough to show that its fiber, the realization $|A_{\bullet}^{\delta}(M)|$, is contractible, as we shall prove it in \Cref{claim}.

Note that there are natural maps $$A_{\bullet}(M)_{\bullet}\to \overline{A}_{\bullet}(M)_{\bullet}\leftarrow \overline{A}^{\delta}_{\bullet}(M)\to A^{\delta}_{\bullet}(M) ,$$ where the first map is the inclusion. By scaling the disks, it is easy to see that the first map induces a weak equivalence after realization in the simplicial directions. The second map is the inclusion to the $0$-simplices in the simplicial direction and the last map is induced by taking germs of embdeddings of disks at their centers.  

\subsubsection{The homotopy type of $X_p(M)$ and $X_p^{\delta}(M)$} For the semisimplical space $X_p(M)$, we have a spectral sequence
\[
E^1_{p,q}(X_{\bullet}(M))=H_q(X_p(M))\Rightarrow H_{p+q}(|X_{\bullet}(M)|),
\]
for any coefficient systems that pulls back from $|X_{\bullet}(M)|$ (see \cite[Section 1.4]{ebert2017semi}) so we often suppress the coefficients for brevity. Similarly, we have a spectral sequence that calculates $H_{p+q}(|X_{\bullet}^{\delta}(M)|)$. In order to be able to compare these spectral sequences, we need to compare the homotopy types of $X_p(M)$ and $X_p^{\delta}(M)$. The first step is to make sure that they have the same number of connected components. 

The set of the connected components for $X_p^{\delta}(M)$ which is the homotopy quotient $A_{p}^{\delta}(M)\hcoker \tdH_{0,\partial}(M)$, is in bijection with the set of the orbits of the action of $\tdH_{0,\partial}(M)$ on $A_p^{\delta}(M)$. On the other hand, since the map $A_p(M)\xrightarrow{\simeq} A^{\bf t}_p(M)$ is equivariant with respect to the homomorphism $|S_{\bullet}(\tH_{0,\partial}(M))|\xrightarrow{\simeq} \tH_{0,\partial}(M)$, we obtain a map 
\[
X_p(M)=A_p(M)\hcoker |S_{\bullet}(\tH_{0,\partial}(M))| \to A_p^{\bf t}(M)\hcoker  \tH_{0,\partial}(M),
\]
which is a weak equivalence by the comparison of the long exact sequences of homotopy groups for fibrations. Therefore, the set of the connected components for $X_p(M)$ is in bijection with the set of the orbits of the action of $\tH_{0,\partial}(M)$ on $A_p^{\bf t}(M)$.
\begin{lem}\label{disks}The set of the connected components of $X_p(M)$ is in bijection with that of $X_p^{\delta}(M)$ for all $p$.
\end{lem}
\begin{proof}From the above discussion, it is enough to show that the set of the orbits of the action of $\tH_{0,\partial}(M)$ on $A_p^{\bf t}(M)$ is in bijection with that of the action of $ \tdH_{0,\partial}(M)$ on $A_{p}^{\delta}(M)$. As we shall see, these actions are transitive, but what matters which will be useful later when the action is not transitive, is that the set of the orbits is determined by the action on the core of the handles. In other words, in this case, the orbits are determined by the action on the center of the disks (or rather germs of the disks at their centers). 

Suppose we have two embeddings $e_1$ and $e_2$ in $A_p(M)_0$. Each embedding gives a configuration of $p+1$ disjoint unparameterized disks  $e_i(M)\subset M$. Since we work with orientation preserving embeddings and homeomorphisms, if we show that we can find an element $f$ in $\tH_{0,\partial}(M)$ that sends the unparameterized $e_1(M)$ to $e_2(M)$, we can change $f$ up to isotopy to send $e_1$ to $e_2$. We first arrange $f$ to send the centers of the disks in $e_1(M)$ to the centers of the disk in $e_2(M)$. This is easy by the fact that the action of $\tH_{0,\partial}(M)$ is strongly $k$-transitive for all $k$ (see \cite[Lemma 2.1.10]{MR1445290}) which means that the action of $\tH_{0,\partial}(M)$ on set of $k$-tuples of points in $M$ is transitive. Then by scaling the disks, we shall change $f$ up to isotopy to send $e_1(M)$ into $e_2(M)$. Since the embeddings are locally flat the regions between the disks are homeomorphic to an annulus (\cite{MR0242165}, of course in low dimensions, we do not need the full force of the annulus theorem), we can change $f$ up to an isotopy to send $e_1(M)$ to $e_2(M)$. Similarly for the action of $\tdH_{0,\partial}(M)$ on $A_p^{\delta}(M)$, if we have two germs of embeddings $e_1^g$ and $e_2^g$ of disks, we first choose representatives of germs and proceed as before. Therefore, the set of the orbits of both actions depend on the centers of the disks (cores of the handles) and since the actions on the centers are transitive, there is a bijection between the set of the orbits.  
\end{proof}

To find the homotopy type of $X_p(M)$ and $X_p^{\delta}(M)$, let us first recall a version of Shapiro's lemma. Let $G$ be a discrete group acting on a set $X$. One could decompose $X$ as $$\coprod_{\alpha\in \text{orbits}} G/H_{\alpha},$$ where $H_{\alpha}<G$ is a stabilizer subgroup of an element in the orbit $\alpha$. Then, we have a map
\begin{equation}\label{shapiro}
\coprod_{\alpha\in \text{orbits}} \mathrm{B}H_{\alpha}\xrightarrow{\simeq} X\hcoker G = |\mathrm{B}_{\bullet}(X,G,*)|,
\end{equation}
which is a homotopy equivalence. 

 Now let $e_p\in A^{\bf t}_p(M)$ be an embedding of $p+1$ disjoint disks and let $[e_p]\in A_p^{\delta}(M)$ denote its germ at its center. Let $\text{Stab}(e_p)$ denote the stabilizer group of $e_p$ under  the action of $\tH_{0,\partial}(M)$ on $A_p(M)$. We denote the stabilizer of $[e_p]$ under the action of $\tdH_{0,\partial}(M)$ on $A_p^{\delta}(M)$ by $\text{Stab}^{\delta}([e_p])$. Let also $\text{Stab}([e_p])$ denote the same group but with the subspace topology as a subgroup of $\tH_{0,\partial}(M)$. 
 \begin{lem}\label{lem1} There is a map $ \mathrm{B}\text{\textnormal{Stab}}^{\delta}([e_p])\to X^{\delta}_p(M)$ which is a homotopy equivalence.
 \end{lem}
 \begin{proof}This is implied by Shapiro's lemma and the fact the action is transitive in this case.
 \end{proof}
 \begin{lem}\label{lem2}There is a zig-zag of weak equivalences between $ \mathrm{B}\text{\textnormal{Stab}}(e_p)$ and  $X_p(M)$.
 \end{lem}
 \begin{proof}
 By the parametrized isotopy extension theorem in the topological setting (\cite[page 19]{burghelea1974homotopy}), we know that the map
 \[
S_{\bullet}(\tH_{0,\partial}(M))\xrightarrow{\text{ev}} A_p(M)_{\bullet}, 
 \]
that is induced by the action on a fixed element, is a Kan fibration\footnote{Note that in this case since $A_p(M)_{\bullet}=S_{\bullet}(A_p^{\bf t}(M))$, the fact that the map $\text{ev}$ is a Kan fibration implies that the map $\tH_{0,\partial}(M)\to A_p^{\bf t}(M)$ is a Serre fibration}. The fiber of this map is $S_{\bullet}(\text{\textnormal{Stab}}(e_p))$. Thus, for each $k$, we have a bijection between the set $A_p(M)_k$ and the coset  $$S_k(\tH_{0,\partial}(M))/S_{k}(\text{\textnormal{Stab}}(e_p)).$$ So again by Shapiro's lemma, we have a simplicial map
\[
\mathrm{B}S_{k}(\text{\textnormal{Stab}}(e_p))\to A_p(M)_k\hcoker S_k(\tH_{0,\partial}(M))=|\mathrm{B}_{\bullet}(A_p(M)_k, S_k(\tH_{0,\partial}(M)), *)|,
\]
which is a weak equivalence for all $k$. So again by using  \cite[Lemma, Page 10]{MR0338129}, that the order of realizations for a bisimplicial set does not matter, if we realize in the $k$-direction, we obtain 
\[
 \mathrm{B}\text{\textnormal{Stab}}(e_p)\xleftarrow{\simeq}\mathrm{B}|S_{\bullet}(\text{\textnormal{Stab}}(e_p))|\to X_p(M)= A_p(M)\hcoker |S_{\bullet}(\tH_{0,\partial}(M))|,
\]
which is a weak equivalence. 
 \end{proof}
 Now we need a lemma from homotopy theory to show that the weak homotopy type of $|X_{\bullet}(M)|$ and $|X_{\bullet}^{\delta}(M)|$ are the same as  $\BH_{0,\partial}(M)$ and $\BdH_{0,\partial}(M)$ respectively. 
 
 \subsubsection{A lemma in homotopy theory}
Here the goal is to show that $ |\overline{A}^{\delta}_{\bullet}(M)|$ and $|A_{\bullet}(M)|$ are weakly contractible. Proving that the realization of the discrete version $ |\overline{A}^{\delta}_{\bullet}(M)|$ is contractible is easier. Using a  lemma  in homotopy theory, we show that the contractibility of $ |\overline{A}^{\delta}_{\bullet}(M)|$ implies the weak contractibility of  $|A_{\bullet}(M)|$. This technique is originally due to Segal (\cite[Appendix]{segal1978classifying}) and it is reformulated by Weiss in  \cite[Lemma 2.2]{weiss2005does}. In particular, in the setting of semi-simplicial spaces, we use an application of this technique (\cite[Proposition 2.8]{galatius2014homological}) due to Galatius and Randal-Williams.
\begin{prop}\label{claim}
The realizations $|A^{\delta}_{\bullet}(M)|$ and $ |\overline{A}^{\delta}_{\bullet}(M)|$ are weakly contractible.
\end{prop}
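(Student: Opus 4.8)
The plan is to reduce both assertions to the standard criterion that the (fat) realization of a semisimplicial set is weakly contractible as soon as every finite sub-semisimplicial-set is contained in one whose realization is contractible. Since the realization of a semisimplicial set is a CW complex, any continuous map $S^k\to|A^\delta_\bullet(M)|$ (or into $|\overline{A}^\delta_\bullet(M)|$) has image in $|Y_\bullet|$ for some subcomplex $Y_\bullet$ with finitely many simplices; so it is enough to produce, for each such $Y_\bullet$, a vertex $v$ that is \emph{joinable} to $Y_\bullet$, meaning that appending $v$ sends each $\sigma\in Y_p$ to a $(p+1)$-simplex whose last face is $\sigma$ and whose remaining faces are obtained by appending $v$ to the faces of $\sigma$ --- this last compatibility being automatic here, since the face maps merely forget one of the disks. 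For such a $v$ the join $v*Y_\bullet$ (the simplices of $Y_\bullet$, the vertex $v$, and the simplices $v*\sigma$ for $\sigma\in Y_\bullet$) is a sub-semisimplicial-set of the ambient one whose realization is the cone on $|Y_\bullet|$, hence contractible, so the given map is null-homotopic in the ambient realization. This is standard; compare \cite{ebert2017semi}.

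Next I would verify joinability, where passing to germs, resp.\ to underlying sets, collapses the problem to the combinatorics of the centers. A finite subcomplex of $\overline{A}^\delta_\bullet(M)$ involves only finitely many embedded disks, hence only finitely many centers $x_1,\dots,x_m\in M$; since $M$ is a manifold of positive dimension it has infinitely many points, so we may choose $x\in M\setminus\{x_1,\dots,x_m\}$ and an orientation-preserving embedded disk $\psi\colon D^n\hookrightarrow M$ centered at $x$ and admitting an external collar. Appending $\psi$ to any tuple occurring in the subcomplex keeps all centers pairwise distinct, so $\psi$ is joinable and $|\overline{A}^\delta_\bullet(M)|$ is weakly contractible. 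For $A^\delta_\bullet(M)$, a finite subcomplex involves finitely many germs with finitely many centers $x_1,\dots,x_m$; choose $x\notin\{x_1,\dots,x_m\}$, pairwise disjoint open neighborhoods $W_1,\dots,W_m,W$ of $x_1,\dots,x_m,x$, and a small closed disk $B\subset W$. Representing each germ occurring in the subcomplex on a neighborhood of its centers small enough that its image lies in $W_1\cup\dots\cup W_m$, and letting $\psi$ be the germ at $x$ of an orientation-preserving embedded disk with image in $B$, one checks that for every simplex $\sigma$ the tuple obtained by appending $\psi$ is represented by an embedding of a disjoint union of disks; hence $\psi$ is joinable and $|A^\delta_\bullet(M)|$ is weakly contractible as well. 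Non-emptiness and connectedness of both realizations come out of the same cone construction.

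I do not expect a genuine obstacle: the argument is formal once the cone criterion is in place, and the only step needing a moment's care is the one for $A^\delta_\bullet(M)$, namely that a germ of an embedded disk at a point remembers only an arbitrarily small neighborhood of that point, so that after shrinking representatives the images become disjoint as soon as the centers are distinct. This is exactly the feature absent from the topological semisimplicial space $A_\bullet(M)$, where disjointness of the disks themselves, and not merely of their centers, is a real constraint; for that reason the weak contractibility of $|A_\bullet(M)|$ is not proved directly but is deduced afterwards from the contractibility of $|\overline{A}^\delta_\bullet(M)|$ by the Segal--Weiss-type argument of \cite[Proposition 2.8]{galatius2014homological}.
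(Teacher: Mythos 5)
Your argument is correct and is essentially the paper's own proof: both use that a map from a compact sphere lands in finitely many simplices of the CW complex $|A^{\delta}_{\bullet}(M)|$, hence involves only finitely many centers of germs, and then cone the image off by appending the germ of a disk centered at a point away from those centers. You make explicit the shrinking-of-representatives step (so that appending the new germ really yields simplices), which the paper leaves implicit, but the route is the same.
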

\begin{proof}
We give a proof for weak contractibility of $|A^{\delta}_{\bullet}(M)|$, the case of $ |\overline{A}^{\delta}_{\bullet}(M)|$ is similar. Let $ f: S^k\to |A^{\delta}_{\bullet}(M)|$ be an element in the $k$-th homotopy group of $|A^{\delta}_{\bullet}(M)|$. Since $|A^{\delta}_{\bullet}(M)|$ is a CW-complex and $S^k$ is compact, the map $f$ hits finitely many simplices of $|A^{\delta}_{\bullet}(M)|$. Hence, there exists a point   ${\bf p}$ and an embedded disk $e(D^2)$ around it such that as an element of $A^{\delta}_{0}(M)$ is not hit by the map $f$. Thus, we have $f(S^k)\subset |A^{\delta}_{\bullet}(M\backslash e(D^2))|$. Adding the germ of $e$ at ${\bf p}$ to the list of germs of embeddings of disks in $M\backslash e(D^2)$ gives a semisimplicial null-homotopy for the inclusion $A^{\delta}_{\bullet}(M\backslash e(D^2))\hookrightarrow A^{\delta}_{\bullet}(M)$. Therefore, the element $f(S^k)$ can be coned off inside $|A^{\delta}_{\bullet}(M)|$.
\end{proof}
\begin{rem}
Note that because $|A^{\delta}_{\bullet}(M)|$ and $ |\overline{A}^{\delta}_{\bullet}(M)|$ have CW structures, so they are in fact contractible.
\end{rem}
Using Proposition 2.18 in \cite{kupers2015proving}, to prove that the maps 
\begin{align}\label{eee}
|\theta|\colon & |X_{\bullet}^{\delta}(M)|\xrightarrow{} \BdH_0(M),\\
||\epsilon||\colon & |X_{\bullet}(M)|\xrightarrow{}\mathrm{B}|S_{\bullet}(\tH_{0,\partial}(M))|.
\end{align}
 are  weak equivalences, it is enough to show that their fibers $|A^{\delta}_{\bullet}(M)|$ and $|A_{\bullet}(M)|$ respectively are contractible.

 Therefore, by \Cref{claim} the first map $|X_{\bullet}^{\delta}(M)|\xrightarrow{\simeq} \BdH_0(M)$ is a weak homotopy equivalence. To prove that the second map is also a weak homotopy equivalence, we need to show that $|A_{\bullet}(M)|$ is weakly contractible. To do so, we use the bisimplicial technique due to Quillen \cite[Proof of Theorem A]{MR0338129}. First note that since the map
\[
A_{\bullet}(M)\xrightarrow{\simeq} \overline{A}_{\bullet}(M)
\]
is a levelwise weak equivalence, it induces a weak homotopy equivalence between the realizations in semisimplicial directions. Hence, to show that $| A_{\bullet}(M)|$ is weakly contractible, it is enough to show that in the zig-zag
\begin{equation}\label{eq:3}
A_{\bullet}(M)\xrightarrow{\simeq} \overline{A}_{\bullet}(M)\xleftarrow{\beta} \overline{A}^{\delta}_{\bullet}(M)
\end{equation}
the second map $\beta$ induces a weak homotopy equivalence after realizations in semisimplicial directions.  

\begin{defn}\label{biss}
Let $A_{\bullet,\bullet}(M)_k$ be the bisemisimplicial simplicial set such that $A_{p,q}(M)_k$ is the subset of  $\overline{A}^{\delta}_{p}(M)\times  \overline{A}_{q}(M)_k$ consisting of those $(p+q+2)$-tuples $$(a_0,\dots,a_p,c_0,\dots,c_q),$$ where the centers of the disks $a_i$ and the disks $c_j(t)$ are pairwise disjoint for all $t\in \Delta^k$.
\end{defn}
The bisemisimplicial simplicial set $A_{\bullet,\bullet}(M)_k$ is augmented in two different semisimplicial directions
\[
\epsilon_{p,k}\colon A_{p,\bullet}(M)_k\to \overline{A}^{\delta}_{p}(M),
\]
\[
\delta_{q,k}\colon  A_{\bullet,q}(M)_k\to \overline{A}_{q}(M)_k.
\]
Let $A_{\bullet,\bullet}(M)$ be the bisemisimplicial space obtained by realizing $A_{\bullet,\bullet}(M)_k$ in the simplicial direction. Similar to \cite[Lemma 5.8]{galatius2014homological}, one can show  that the following diagram is homotopy commutative
 \begin{equation}\label{commdiagram}\begin{gathered}
     \begin{tikzpicture}[node distance=2cm, auto]
  \node (A) {$|\overline{A}^{\delta}_{\bullet}(M)|$};
  \node (B) [right of=A] {$$};
  \node (C) [right of=B] {$|\overline{A}_{\bullet}(M)|$};  
  \node (D) [below of=B, node distance=1.3cm] {$|A_{\bullet,\bullet}(M)|.$};
  \draw[<-] (C) to node {$\delta$} (D);
  \draw [->] (D) to node {$\epsilon$} (A);
    \draw [->] (A) to node {$$} (C);
\end{tikzpicture}
\end{gathered}
\end{equation}

\begin{prop}\label{claim1}
The realization $|A_{\bullet}(M)|$ is weakly contractible.
\end{prop}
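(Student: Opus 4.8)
The plan is to prove the equivalent statement that $|\overline{A}_\bullet(M)|$ is weakly contractible — equivalent because the levelwise weak equivalence $A_\bullet(M)\xrightarrow{\simeq}\overline{A}_\bullet(M)$ induces a weak equivalence on fat realizations — and to do this through the bisemisimplicial space $A_{\bullet,\bullet}(M)$ and the homotopy commutative triangle \ref{commdiagram}. By \Cref{claim} we already know $|\overline{A}^\delta_\bullet(M)|\simeq *$. So it suffices to show that the augmentation $\delta\colon |A_{\bullet,\bullet}(M)|\to |\overline{A}_\bullet(M)|$ is a weak homotopy equivalence. Granting this, the triangle \ref{commdiagram} exhibits $\delta$ (up to homotopy) as the composite $|A_{\bullet,\bullet}(M)|\xrightarrow{\epsilon}|\overline{A}^\delta_\bullet(M)|\to|\overline{A}_\bullet(M)|$, which factors through the weakly contractible space $|\overline{A}^\delta_\bullet(M)|$; hence $\delta$ induces the trivial map on all homotopy groups, and since it is also a weak equivalence, every homotopy group of $|A_{\bullet,\bullet}(M)|$ and of $|\overline{A}_\bullet(M)|$ vanishes (and $\pi_0$ of each is a single point). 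Thus $|\overline{A}_\bullet(M)|\simeq *$, and therefore $|A_\bullet(M)|\simeq *$.

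To prove that $\delta$ is a weak equivalence it is enough, since fat realization of semisimplicial spaces sends levelwise weak equivalences to weak equivalences (see \cite{ebert2017semi}), to show that for every $q\ge 0$ the augmentation $\delta_q\colon |A_{\bullet,q}(M)|\to \overline{A}_q(M)$ is a weak homotopy equivalence. In the $p$-direction, $A_{\bullet,q}(M)\to \overline{A}_q(M)$ is an augmented topological flag complex: a tuple $(a_0,\dots,a_p,c_0,\dots,c_q)$ lies in $A_{p,q}(M)$ precisely when the centers of the $a_i$ are pairwise distinct and distinct from the centers of the $c_j$, a fibrewise open condition cut out pairwise. I would then verify the hypotheses of \cite[Proposition 2.8]{galatius2014homological}: given $\bar c=(c_0,\dots,c_q)\in\overline{A}_q(M)$ the images of the disks $c_j$ form a compact subset of $M$, so one can choose a collared embedded $n$-disk centered at a point of $M$ outside these images; and given finitely many germs $a_0,\dots,a_p$ whose centers avoid the centers of $\bar c$, one can further choose such a disk whose center avoids the finitely many centers of $a_0,\dots,a_p$ as well. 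Because ``adding a small disk around a chosen point away from a compact set'' is a local operation, all these choices can be made continuously in a neighbourhood of any given $\bar c$, which supplies the local lifts and local joinability required by the criterion. (Equivalently, one may observe that $\delta_q$ is a microfibration: the germ coordinates of a point of $|A_{\bullet,q}(M)|$ are locally constant, and disjointness of a finite set of germ-centers from a continuously varying finite family of disk-centers is an open — hence, by compactness of the parameter spaces, uniform — condition, so partial lifts extend for a short time; the fibre over $\bar c$ is $|\overline{A}^\delta_\bullet(M\setminus P_{\bar c})|$ with $P_{\bar c}$ the centers of $\bar c$, which is weakly contractible by the coning argument of \Cref{claim} applied to the configurations whose centers avoid $P_{\bar c}$.)

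The main obstacle is exactly this last point: upgrading the ``add a disjoint disk'' null-homotopy of \Cref{claim} to something performed \emph{continuously in the parameter $\bar c$}, i.e.\ checking the hypotheses of \cite[Proposition 2.8]{galatius2014homological} (or the microfibration property) for the augmentation $A_{\bullet,q}(M)\to\overline{A}_q(M)$. The reason this goes through — and the reason germs are used — is that the relevant data is local: a germ of an embedded disk at a point $\mathbf p$ has center disjoint from any finite configuration of disks as soon as $\mathbf p$ avoids their finitely many centers, and this persists under small perturbations of the configuration, so compactness of the parameter spaces turns pointwise choices into continuous ones. Once $\delta$ is known to be a weak equivalence the remainder of the argument is formal, as above, and completes the proof that $|A_\bullet(M)|$ is weakly contractible.
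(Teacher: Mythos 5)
Your proposal is correct and follows essentially the same route as the paper: reduce to $|\overline{A}_\bullet(M)|$, use the homotopy commutative triangle through $|A_{\bullet,\bullet}(M)|$ together with the contractibility of $|\overline{A}^\delta_\bullet(M)|$ from \Cref{claim}, and show $\delta$ is a weak equivalence levelwise by identifying the fibers of $\delta_q$ as $|\overline{A}^\delta_\bullet(M\setminus\text{centers})|$ and applying \cite[Proposition 2.8]{galatius2014homological}. Your extra detail on verifying the flag-complex/microfibration hypotheses is a reasonable elaboration of what the paper leaves implicit.
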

\begin{proof}
Since $|A_{\bullet}(M)|\xrightarrow{\simeq}|\overline{A}_{\bullet}(M)|$, we instead show that $|\overline{A}_{\bullet}(M)|$ is weakly contractible. Because the diagram (\ref{commdiagram}) is homotopy commutative and $|\overline{A}^{\delta}_{\bullet}(M)|$ is weakly contractible, if we show that the map $\delta$ is a weak homotopy equivalence, we then deduce that $|\overline{A}_{\bullet}(M)|$ is also weakly contractible. To do so, it suffices to prove that 
\[
|\delta_q|\colon |A_{\bullet,q}(M)|\to \overline{A}_{q}(M),
\] 
is a weak equivalence. The idea is to show that $|\delta_q|$ is a microfibration with a contractible fiber. But since $\overline{A}_{q}(M)$ is the realization of a simplicial set, we shall apply simplicial approximation similar to \cite[Proposition 2.44]{kupers2015proving} to exhibit the map $|\delta_q|$ as the realization of a map between simplicial sets.  Note that $A_{\bullet,q}(M)_k$ is a semisimplicial set for a fixed $q$ and $k$. We can freely add all degeneracies (see \cite[Section 1]{ebert2017semi}) to obtain a bisimplicial set $EA_{\bullet,q}(M)_{\bullet}$ whose realization is homotopy equivalent to $|A_{\bullet,q}(M)|$. More concretely, we have 
\[
EA_{p,q}(M)_{k}= \coprod_{[p]\twoheadrightarrow [p']}A_{p',q}(M)_{k}.
\]
The realization of the bisimplicial set $EA_{\bullet,q}(M)_{\bullet}$ is homeomorphic to the realization of its diagonal $\text{diag}(EA_{\bullet,q}(M)_{\bullet})$. Therefore, it is enough to show that the augmentation map 
\[
\delta_{q,\bullet}\colon \text{diag}(EA_{\bullet,q}(M)_{\bullet})\to \overline{A}_{q}(M)_{\bullet},
\]
induces a weak equivalence  after realization. By the simplicial approximation, it is enough to show that for each pair $(K,\partial K)$ of simplicial sets where $(|K|, |\partial K|)\cong (D^i, S^{i-1})$ and each diagram
 \[
 \begin{tikzpicture}[node distance=2.6cm, auto]
  \node (B)  {$\partial K$};
  \node (C) [below of= B, node distance=1.6cm ] {$K$};  
  \node (E) [right of= B] {$\text{diag}(EA_{\bullet,q}(M)_{\bullet})$};
  \node (F) [right of= C ] {$\overline{A}_{q}(M)_{\bullet}$};  
  \draw [->] (B) to node {$g$}(E);
  \draw [->] (C) to node {$G$}(F);
  \draw [->] (B) to node {$$} (C);
  \draw [->] (E) to node {$\delta_{q,\bullet}$} (F);

\end{tikzpicture}
\]
we have a lift $\tilde{G}\colon |K|\to |\text{diag}(EA_{\bullet,q}(M)_{\bullet})|$ so that $\tilde{G}|_{|\partial K|}=|g|$. The map $G$ can be represented by a locally flat immersion $f\colon D^i\times \coprod_{[q]} D^2\to D^i\times M$ over $D^i$ so that the centers of the embedded disks are disjoint. 

On the other hand, by \Cref{biss}, the map $|g|$ gives a map $h\colon S^{i-1}\to |\overline{A}^{\delta}_{\bullet}(M)|$ where for each $t \in S^{i-1}$ the center of $g(t)$ and the centers ${\bf c}_t$ of the embedded disks $f|_t$ are disjoint. We want to show that $h$ can be extended to a map $\tilde{h}\colon D^i\to |\overline{A}^{\delta}_{\bullet}(M)|$ where for each $t \in D^{i}$ the center of $g(t)$ and ${\bf c}_t$ are disjoint. Note that $h$ gives an element of the homotopy group of the space of pairs $$X=\{(t,x)\in D^i\times |\overline{A}^{\delta}_{\bullet}(M)| | \text{ the center of $x$ and ${\bf c}_t$ are disjoint} \}.$$
Hence, it suffices to show that $X$ is contractible. Note that the projection $X\to D^i$ is a microfibration by the openness of the condition of centers being disjoint. And the fiber over $t$ is homeomorphic to $|\overline{A}^{\delta}_{\bullet}(M\backslash {\bf c}_t)|$ which is contractible by \Cref{claim}. Therefore, the projection is a fibration (see \cite[Lemma 2.2]{weiss2005does} or \cite[Proposition 2.6]{galatius2014homological}) with a contractible fiber so $X$ is contractible. Hence, since $|\delta_q|$ is a weak equivalence for all $q$, so is $\delta$.
\end{proof}
\subsubsection{Reducing \Cref{main} to the case of manifolds with boundary}\label{redbdry} Recall the goal is to compare the spectral sequences for the semisimplicial spaces $X_{\bullet}(M)$ and $X_{\bullet}^{\delta}(M)$. For these $0$-handle resolutions (unlike the $1$-handle resolutions as we shall see later), there is no direct semisimplicial map from $X_{\bullet}^{\delta}(M)$ to $X_{\bullet}(M)$. But we shall find a zig-zag of semisimplicial maps between them and show that our zig-zag of map induces a map between their spectral sequences.

Since $A^{{\bf t},\delta}_{\bullet}(M)=A_{\bullet}(M)_0$, the inclusion  $A^{{\bf t},\delta}_{\bullet}(M)\to A_{\bullet}(M)_{\bullet}$ is equivariant with respect to the map $\tdH_{0,\partial}(M)\to S_{\bullet}(\tH_{0,\partial}(M))$. Therefore, we have an induced map between homotopy quotients
\begin{equation}\label{eq:4}
\alpha_{\bullet}\colon A^{{\bf t},\delta}_{\bullet}(M)\hcoker \tdH_0(M) \to X_{\bullet}(M). 
\end{equation}
On the other hand, the map $A^{{\bf t},\delta}_{\bullet}(M)\to A^{\delta}_{\bullet}(M)$ is equivariant with respect to the action $\tdH_{0,\partial}(M)$. Therefore, we have an induced map between homotopy quotients
\begin{equation}\label{eq:4'}
 A^{{\bf t},\delta}_{\bullet}(M)\hcoker \tdH_{0,\partial}(M)\to X_{\bullet}^{\delta}(M).
\end{equation}
So we have the following homotopy commutative diagram
{\small
\begin{equation}\label{e'}
\begin{gathered}
\begin{tikzcd}
X_p^{\delta}(M)& A^{{\bf t},\delta}_p(M)\hcoker \tdH_{0,\partial}(M)\arrow{l}\arrow[""]{r}&X_{p}(M) \\ \mathrm{B}\text{\textnormal{Stab}}^{\delta}([e_p])\arrow["\simeq"]{u}&\mathrm{B}\text{\textnormal{Stab}}^{\delta}(e_p)\arrow[""]{l}\arrow["\simeq"]{u}\arrow[""]{r}& \mathrm{B}|S_{\bullet}(\text{\textnormal{Stab}}(e_p))|\arrow["\simeq"]{u},
\end{tikzcd}
 \end{gathered}
 \end{equation}}
\noindent where the first and the last weak equivalences are given by \Cref{lem1} and \Cref{lem2} and the middle weak equivalence is deduced from the transitivity of the action of $\tdH_{0,\partial}(M)$ on $A^{{\bf t},\delta}_{\bullet}(M)$ and the Shapiro's lemma. 
 \begin{lem}\label{lem3}
 The map $\mathrm{B}\text{\textnormal{Stab}}^{\delta}(e_p)\to \mathrm{B}\text{\textnormal{Stab}}^{\delta}([e_p])$ induces a homology isomorphism.
 \end{lem}
Note that \Cref{lem3} implies that the spectral sequences of the semisimplicial spaces $X_{\bullet}^{\delta}(M)$ and $A^{{\bf t},\delta}_{\bullet}(M)\hcoker \tdH_{0,\partial}(M)$ are isomorphic as the map (\ref{eq:4'}) induces an isomorphism on $E^1$-page. On the other hand $\alpha_{\bullet}$ induces a map between spectral sequences for $A^{{\bf t},\delta}_{\bullet}(M)\hcoker \tdH_{0,\partial}(M)$ and $X_{\bullet}(M)$. With abuse of notation, let $\alpha_*$ denote the induced map between spectral sequences for $X_{\bullet}^{\delta}(M)$ and $X_{\bullet}(M)$. 
\begin{equation}\label{eq:5}
\begin{gathered}
\begin{tikzcd}
H_q(X_{p}^{\delta}(M))\arrow["\alpha_*"]{r}\arrow[Rightarrow]{d}&H_q(X_{p}(M))\arrow[Rightarrow]{d}\\H_{p+q}(|X_{\bullet}^{\delta}(M)|)\arrow[""]{r}\arrow["\cong"]{d}& H_{p+q}(|X_{\bullet}(M)|)\arrow["\cong"]{d}\\ H_{p+q}(\BdH_{0,\partial}(M))\arrow[""]{r}& H_{p+q}(\mathrm{B}|S_{\bullet}(\tH_{0,\partial}(M))|).
\end{tikzcd}
\end{gathered}
\end{equation}
Before proving \Cref{lem3}, let us show that this comparison of spectral sequences reduces \Cref{main} from the case of closed surfaces to  surfaces with non-empty boundary. 
\begin{defn}\label{punctures}
Let $e_p$ be an element in $A_p(M)$. Let $M\backslash e_p$ denote the manifold obtained from $M$ by removing the interior of the embedded disks in $M$ given by $e_p$. Also let $M\backslash c(e_p)$ denote the punctured manifold obtained from $M$ by removing the centers of the embedded disks given by $e_p$. 
\end{defn}
\begin{prop}\label{reduction1}Suppose \Cref{main} holds for $M\backslash e_p$ for all $e_p\in A_p(M)$ and all $p$. Then it also holds for $M$. 
\end{prop}
\begin{proof}
Given the spectral sequence (\ref{eq:5}), it suffices to prove that $\alpha_*$ induces an isomorphism between $E^1$-pages. Using the commutative diagram (\ref{e'}) and \Cref{lem3}, it is enough to show that the hypothesis of the proposition implies that the map
\[
\mathrm{B}\text{\textnormal{Stab}}^{\delta}(e_p)\to \mathrm{B}\text{\textnormal{Stab}}(e_p),
\]
induces a homology isomorphism. Note that the identity component of $\text{\textnormal{Stab}}(e_p)$ is $\tH_{0,\partial}(M\backslash e_p)$, so we have a short exact sequence of groups
\[
1\to \tH_{0,\partial}(M\backslash e_p)\to \text{\textnormal{Stab}}(e_p)\to \pi_0(\text{\textnormal{Stab}}(e_p))\to 1.
\]
From this short exact sequence, we obtain a homotopy commutative diagram between two fibrations
 \begin{equation}\label{ee}
 \begin{tikzcd}
 \BdH_{0,\partial}(M\backslash e_p)\arrow[" "]{r}\arrow[rightarrow]{d}& \BH_{0,\partial}(M\backslash e_p)\arrow[rightarrow]{d}\\\mathrm{B}\text{\textnormal{Stab}}^{\delta}(e_p)\arrow[""]{r}\arrow{d}& \mathrm{B}\text{\textnormal{Stab}}(e_p)\arrow{d}\\ \mathrm{B}\pi_0(\text{\textnormal{Stab}}(e_p))\arrow["\cong"]{r}& \mathrm{B}\pi_0(\text{\textnormal{Stab}}(e_p)).
\end{tikzcd}
 \end{equation}
 
 Now by the hypothesis, the map between fibers induces a homology isomorphism. Since the bases are the same, using the Serre spectral sequence, we conclude that the map between total spaces induces a homology isomorphism.
\end{proof}
\begin{proof}[Proof of \Cref{lem3}] Let us first consider $\text{\textnormal{Stab}}(e_p)$ and $\text{\textnormal{Stab}}([e_p])$ as  subgroups of $\tH_{0,\partial}(M)$ with the subspace topology. The identity components are respectively $\tH_{0,\partial}(M\backslash e_p)$ and $\tH_{0,c}(M\backslash c(e_p))$ where the latter is the identity component of $\tH_{c}(M\backslash c(e_p))$, the compactly supported homeomorphisms of the punctured surface $M\backslash c(e_p)$. Hence, we have a map between two fibrations
 \[
 \begin{tikzcd}
 \BdH_{0,\partial}(M\backslash e_p)\arrow[" "]{r}\arrow[rightarrow]{d}& \BdH_{0,c}(M\backslash c(e_p))\arrow[rightarrow]{d}\\\mathrm{B}\text{\textnormal{Stab}}^{\delta}(e_p)\arrow[""]{r}\arrow{d}& \mathrm{B}\text{\textnormal{Stab}}^{\delta}([e_p])\arrow{d}\\ \mathrm{B}\pi_0(\text{\textnormal{Stab}}(e_p))\arrow[""]{r}& \mathrm{B}\pi_0(\text{\textnormal{Stab}}([e_p])).
\end{tikzcd}
 \]
 The pushing collar lemma in \cite[Corollary 2.3]{nariman2014homologicalstability}  implies that the map between fibers induces a homology isomorphism. So if we show that the map between bases induces a weak equivalence, the lemma follows from a Serre spectral sequence argument again. 
 
 Let us first show that the map $\pi_0(\text{\textnormal{Stab}}(e_p))\to \pi_0(\text{\textnormal{Stab}}([e_p]))$ is surjective. Let $f\in \text{\textnormal{Stab}}([e_p])<\tH_{c}(M\backslash c(e_p))$, we shall change $f$ up to isotopy so that it fixes the disks $e_p(\coprod_{[p]}D^2)$. Since $f$ is supported away from the punctures $c(e_p)$, there exists a small disk $D^2_{\epsilon}\subset D^2$ of radius $\epsilon$ so that $f$ is supported away from $e_p(\coprod_{[p]}D^2_{\epsilon})$. Let $M_{\epsilon}$ denote the manifold obtained by removng the interior of $e_p(\coprod_{[p]}D^2_{\epsilon})$ from $M$. Then there is a self-embedding $p$ of $M_{\epsilon}$ isotopic to the identity such that $p(M_{\epsilon})=M\backslash e_p$. Consider the homeomorphism 
 \[
h_{\epsilon}(f)(x):= \begin{cases}
p\circ f\circ p^{-1}(x) & \text{if } x\in M\backslash e_p\\ \text{id} &\text{if }x\in e_p(\coprod_{[p]}D^2).
\end{cases}
\]
Since  $h_{\epsilon}(f)(x)$ is isotopic to $f$ and is the identity on $e_p(\coprod_{[p]}D^2)$, we have $h_{\epsilon}(f)(x)\in \text{\textnormal{Stab}}(e_p)$.

Proving that $\pi_0(\text{\textnormal{Stab}}(e_p))\to \pi_0(\text{\textnormal{Stab}}([e_p]))$ is injective is also the same. If we have $f_0$ and $f_1$ in $\text{\textnormal{Stab}}(e_p)$ that are isotopic in $\text{\textnormal{Stab}}([e_p])$, then the isotopy $f_t$ is supported away from $e_p(\coprod_{[p]}D^2_{\epsilon})$ for some small $\epsilon$. Then similar argument as the surjectivity case would imply that $f_0$ and $f_1$ are isotopic with an isotopy which is the identity on $e_p(\coprod_{[p]}D^2)$. 
\end{proof}
\begin{rem}\label{Jekel}
Note that in dimension $1$, by Mather's theorem (\cite{MR0288777}), we know that \Cref{main} holds for the intervals. Using the $0$-handle resolution for $S^1$ and \Cref{reduction1}, we deduce the Thurston theorem \ref{main} for $M=S^1$ (see \cite[Theorem 4]{MR2871163} for a similar idea). 
\end{rem}
\subsection{$1$-handle resolutions}\label{high} Using \Cref{reduction1}, to prove \Cref{main}, we can assume that $M$ is a surface with non-empty boundary. Now, we want to inductively reduce to the case of a simpler surface by removing $1$-handles from $M$. Similar to the previous section, to do this reduction, we need to define augmented semisimplicial sets whose realizations are contractible. 
\begin{defn}\label{def2} Let $\phi\colon D^1\times \bR\hookrightarrow M$ be a fixed $1$-handle so that $\phi(D^1\times \bR)\cap  \partial M=\phi(S^0\times \bR)$ and the core of the handle is the arc $\phi(D^1\times \{0\})$ in $M$. We shall write $\vec{e}$ for a unit basis vector in $\bR$.
\begin{figure}[ht]

\begin{tikzpicture}[scale=.5]

%
%
%
 \draw[line width=1.05pt] [dashed] (0,.-2.5) arc (-90:90:0.5 and 2.5);
\draw [line width=1.05pt] (0,2.5) arc (90:270:0.5 and 2.5);
\draw [line width=1.05pt] (0,2.5)--(13,2.5);
\draw [line width=1.05pt] (13,-2.5) arc (-90:90:2.5);
\draw [line width=1.05pt] (0,-2.5)--(13,-2.5);
\draw [line width=1.1pt] (5.1,.0) arc (250:295:2.5 and 6.75);
\draw   [line width=1.7pt] (6.87, 0.13) arc (48:150:1.2 and 0.9);

\begin{scope}[shift={(6,0)}]
\draw [line width=1.05pt] (5.1,.0) arc (250:295:2.5 and 6.75);
\draw   [line width=1.7pt] (6.87, 0.13) arc (48:145:1.2 and 0.9);
\end{scope}

\draw  [line width=2mm, gray] (-0.5,0) to [out= -20, in=-140] (5.1,.0);
\draw  [line width=2mm, gray] (0.5,1) to [out= 20, in=140] (5.1,.0);

\draw  [] (-0.5,0) to [out= -20, in=-140] (5.1,.0);
\draw  [dashed] (0.5,1) to [out= 20, in=140] (5.1,.0);
\end{tikzpicture}  
  \caption{The $1$-handle $\phi(D^1\times \bR)$. }
    \label{core}

 \end{figure}

\noindent{\bf Topological version:} We define a semisimplicial simplicial set $B_{\bullet}(M, \phi)_{\bullet}$  and a semisimplicial space $B^{\bf t}_{\bullet}(M,\phi)$ on which $S_{\bullet}(\tH_{0,\partial}(M))$ and $\tH_{0,\partial}(M)$ act respectively.
\begin{itemize}
\item  We first define the $0$-simplices in the semisimplicial direction, $B_0(M,\phi)_{\bullet}$, to be the simplicial set given by pairs $(t, f)$ where $f\in \text{Emb}_{\bullet}^{\text{lf}}(D^1, M)$, $t\in \bR$ and for all $s\in \Delta^{\bullet}$, the embedded arc $f(s)$ satisfies
\begin{equation}\label{cond1}
f(s)|_{S^0}=\phi|_{S^0\times \{t \,\vec{e}\}},
\end{equation}
and the embedded arc $f(s)$ is isotopic to the arc $\phi(D^1\times \{t\,\vec{e}\})$ relative to the boundary.

The set of $p$-simplices, $B_p(M,\phi)_{\bullet}$, in the semisimplicial direction is given by $(p+1)$-tuples  $((t_0,f_0), (t_1, f_1),\dots, (t_p, f_p))$ so that $t_0<t_1<\cdots<t_p$ and for each $s\in \Delta^{\bullet}$, the arcs $f_i(s)$ are disjoint for all $i$. The face maps in the semisimplicial direction are given by forgetting the pairs $(t_i,f_i)$. We shall write $ B_{\bullet}(M,\phi)$ for the semisimplicial space obtained by realizing in the simplicial direction.
\item Let $B^{\bf t}_0(M,\phi)$ be the space of pairs $(t,f)$ where $t$ is a real number and $f\in \textit{Emb}^{\text{lf}}(D^1,M)$ such that $f$ satisfies the equation (\ref{cond1}). The space of such pairs are topologized as a subspace of $\bR\times  \textit{Emb}^{\text{lf}}(D^1,M)$. The space of $p$-simplices $B^{\bf t}_0(M,\phi)$ is given by $(p+1)$-tuples  $((t_0,f_0), (t_1, f_1),\dots, (t_p, f_p))$ so that $t_0<t_1<\cdots<t_p$ and the arcs $f_i$'s are disjoint for all $i$. The face maps for the semisimplicial space $B^{\bf t}_{\bullet}(M,\phi)$ are similarly given by forgetting the pairs $(t_i,f_i)$.
\end{itemize}

\noindent{\bf Discrete version:} Let $B^{\delta}_{\bullet}(M,\phi)$ be the semisimplicial set $B_{\bullet}(M,\phi)_{0}$ on which the discrete group $\tdH_{0,\partial}(M)$ acts. 
\end{defn} 
\begin{rem}
Since we want to emphasize on the methods for the possible applications in higher dimensions, we work with the locally flat embeddings, but in fact in dimension $2$ all embedded arcs are locally flat by the Schoenflies theorem.
\end{rem}
Note that by definition, for every pair $(t,f)\in B_0(M,\phi)_{\bullet}$, the real number $t$ is uniquely determined by $f$. We denote this $t$-coordinate by $t_{f}$. Moreover each simplex in $|B^{\delta}_{\bullet}(M,\phi)|$ has a canonical ordering induced by the condition $t_0<t_1<\cdots<t_p$. Therefore, $|B^{\delta}_{\bullet}(M,\phi)|$ has a simplicial complex structure with a natural ordering on each simplex. 

\begin{defn}\label{defn3}Similar to \Cref{defn1}, we define $Y_{\bullet, n,k}(M,\phi)$ to be an augmented semisimplicial bisimplicial set
\[
Y_{\bullet, n,k}(M,\phi):=B_{n}(B_{\bullet}(M,\phi)_k,S_{k}(\tH_{0,\partial}(M)),*)\xrightarrow{\epsilon} B_{n}(*,S_{k}(\tH_{0,\partial}(M)),*).
\] 
We denote the realization of $Y_{p, n,k}(M,\phi)$ in the bisimplicial directions by $Y_p(M,\phi)$. If we realize in the semisimplicial direction, we obtain the map
\[
||\epsilon||\colon |Y_{\bullet}(M,\phi)|\to \mathrm{B}|S_{\bullet}(\tH_{0,\partial}(M))|.
\]
\end{defn}
\begin{defn}\label{defn4} Similar to \Cref{defn2}, we define the $1$-handle  resolution associated to $\phi$ for $\BdH_{0,\partial}(M)$ is the augmented semisimplicial space
\[
\theta\colon Y_{\bullet}^{\delta}(M,\phi):= B_{\bullet}^{\delta}(M,\phi)\hcoker \tdH_{0,\partial}(M)\to \BdH_{0,\partial}(M).
\]
If we realize in the semisimplicial direction, we obtain the map
\[
|\theta|: |Y_{\bullet}^{\delta}(M,\phi)|\to \BdH_{0,\partial}(M).
\]
To show that the maps $||\epsilon||$ and $|\theta|$ are weak equivalences as before, we need to show that $|B^{\delta}_{\bullet}(M,\phi)|$ and $|B_{\bullet}(M,\phi)|$ are weakly contractible. 
\end{defn}
\begin{lem}\label{arc}
The realizations $|B^{\delta}_{\bullet}(M,\phi)|$ and $|B_{\bullet}(M,\phi)|$ are weakly contractible. 
\end{lem}
\begin{proof}
It is enough to show that $|B^{\delta}_{\bullet}(M,\phi)|$ is contractible since the weak contractibility of $|B_{\bullet}(M,\phi)|$ is deduced from that of $|B^{\delta}_{\bullet}(M,\phi)|$ by the same argument as in \Cref{claim1}.  To show that the simplicial complex $|B^{\delta}_{\bullet}(M,\phi)|$ is contractible, we prove that  all continuous maps $f:S^k\to |B^{\delta}_{\bullet}(M,\phi)|$ are nullhomotopic for all $k$. Without loss of generality, we can assume that $f$ is a PL map with respect to a triangulation $K$ of $S^k$. To show that $f$ is nullhomotopic, we show that there exists $\alpha\in  B^{\delta}_{\bullet}(M,\phi)$ so that one can change $f$ up to homotopy so that  the image $f(K)$ lies in $\text{Star}(\alpha)$.

First we argue that we can assume the vertices of $f(K)$ are pairwise transverse after changing $f$ up to homotopy. For transversality in the topological category, we need the data of the normal microbundle (see \cite[Essay 3, section 1]{MR0645390}), but by the Schoenflies theorem all embedded arcs are bicollared so in this dimension, the embedded arcs have a unique normal microbundle data. Therefore, we did not need to add the microbundle data (germ of the cocore around the core of the handle) to the definition of $B^{\delta}_{\bullet}(M,\phi)$.

\noindent{\bf Claim:} After changing $f$ up to homotopy, we can assume that all vertices in $f(K)$ as arcs in $M$ are pairwise transverse. 

Doing this in the smooth category is easier, because transversality is an open condition in $C^{\infty}$-topology and one can change an arc up to small isotopy so that it becomes simultaneously transverse to several other arcs. But in the topological category one needs to do it inductively. We give an argument in a way that can be generalized to the higher dimensions. 

\noindent{\it Proof of the claim:}
Since the arcs are bicollared, by a parallel copy of an arc $\psi$ we mean an embedded arc close to $\psi$ in its collar neighborhood that is disjoint from $\psi$ and satisfies equation (\ref{cond1}) for some $t$. Let us enumerate   the vertices of $f(K)$ by $\psi_1, \psi_2,\dots, \psi_m$. First we choose a parallel copy of $\psi_2$ and perturb it by a small isotopy to obtain $\psi'_2$ so that it  becomes transverse to  $\psi_1$. If the isotopy is small enough  $\psi'_2$ is disjoint from  $\psi_2$ and  all vertices in $f(K)$ that $\psi_2$ was disjoint from. Therefore, there is a homotopy replacing $\psi_2$ with $\psi'_2$ and fixing the image of other vertices. Thus we may assume that $\psi_1$ and $\psi_2$ are transverse.  Hence their intersection  is a set of points. 

Now we move on to $\psi_3$. Similarly by choosing a nearby copy of $\psi_3$ and a small perturbation $\psi'_3$ of this nearby copy, we obtain an arc that is disjoint from the points in the intersection of the previous two arcs $\psi_1$ and $\psi_2$.  Hence we can choose a small neighborhood $U$ of the intersection of  $\psi_1$ and $\psi_2$ such that  $\psi'_3$ is also disjoint from $U$. Now by Quinn's transversality (\cite{quinn1988topological}), we can find a small isotopy whose support is away from $U$ and we obtain an arc $\psi''_3$ that is transverse to the submanifold $(\psi_1(D^1)\cup \psi_2(D^1))\backslash U$. If we choose the isotopy small enough the arc $\psi''_3$ is disjoint from  $\psi_3$ and all the vertices of $f(K)$ that $\psi_3$ was disjoint from. Hence by a homotopy of the map $f$, we can replace $\psi_3$ with $\psi''_3$. Therefore, we may assume that $\psi_3$ is transverse to  $\psi_1$ and $\psi_2$. By continuing this process we can change $f$ up to homotopy to make  $\psi_i$ transverse to $\psi_j$ for $j<i$. Hence, we may assume that  the vertices of $f(K)$ are pairwise transverse to each other and the $t$-coordinates are different. $\blacksquare$

Similarly, we choose a vertex $\psi\in B^{\delta}_{\bullet}(M,\phi)$ that is transverse to all the vertices in $f(K)$. If a vertex $v\in f(K)$ intersects the arc $\psi$, because of the condition (\ref{cond1}) it intersects in even number of points $\{p_1, p_2, \dots, p_{2i_v}\}$. So the consecutive points of the intersection $\{p_{2i-1}, p_{2i}\}$ give union of disjoint subintervals on $\psi$. Let $I_v$ denote these subintervals on $\psi$ associated to its intersection with $v$.  The simplicial complex $f(K)$ has finitely many vertices. From the set of intervals $\{I_v\}_{v\in f(K)}$ on $\psi$, we choose a maximal family of  subintervals that are either disjoint or one includes the other. Such family has an innermost subinterval $D$ on the arc $\psi$. Suppose that this innermost subinterval is in $I_{\psi_1}$. Since the arcs in $\text{Star}(\psi_1)$ are disjoint from $\psi_1$ and $D$ is an innermost subinterval in a maximal family, $D$ is disjoint from the arcs in $\text{Star}(\psi_1)$. The two ends $\partial D$ on $\psi$ also lies on $\psi_1$ and they bound a subinterval $D'$ on $\psi_1$. 

 Since $\psi_1$ is isotopic to a parallel copy of $\psi$, by \cite[Proposition 1.7]{farb2011primer} there is a Whitney disk $N$ (or bigon in the context of surgery of arcs on surfaces) that bounds $D\cup D'$. Hence, by doing the Whitney trick, we can choose an arc $\psi'_1$ so that it is disjoint from $\psi_1$ and all arcs in $\text{Star}(\psi_1)$ and also it intersects $\psi$ in fewer points. Therefore, there is a simplicial homotopy of $f$ that replaces $\psi_1$ with $\psi'_1$. By continuing this process, we can homotope $f$ to the star of the arc $\psi$. Hence, $f$ is nullhomotopic.
\end{proof}

\begin{cor}\label{cor1}
The maps $||\epsilon||$ and $|\theta|$ in \Cref{defn3} and \Cref{defn4} are weak homotopy equivalence. 
\end{cor}
Note that unlike the $0$-handle case, we have a semisimplicial map 
\[
 B^{\delta}_{\bullet}(M,\phi)\to  B_{\bullet}(M,\phi)_{\bullet},
\]
which is the inclusion to the $0$-simplices in the simplicial direction and is equivariant with respect to the homomorphism $\tdH_{0,\partial}(M)\to S_{\bullet}(\tH_{0,\partial}(M))$. Therefore, we obtain a semisimplicial map between $1$-handle resolutions
\begin{equation}\label{eq:7}
\alpha_{\bullet}\colon Y_{\bullet}^{\delta}(M,\phi)\to Y_{\bullet}(M,\phi).
\end{equation}
By comparing the spectral sequences for $Y_{\bullet}^{\delta}(M,\phi)$ and $Y_{\bullet}(M,\phi)$, we want to show that \Cref{main} holds true for $M$ if it is true for a surface $M\backslash \phi$ that is obtained from $M$ by removing the $1$-handle $\phi$. 

\begin{defn}For a $p$-simplex $\sigma_p\in B^{\delta}_{\bullet}(M,\phi)$. Let  $M\backslash \sigma_p$ denote the surface  that we obtain by cutting $M$ along the arcs in $\sigma_p$.
\end{defn}
\begin{prop}\label{reduction2}Suppose \Cref{main} holds for $M\backslash \phi$. Then it also holds for $M$. 
\end{prop}
\begin{proof}The map $\alpha_{\bullet}$ in (\ref{eq:7}) induces a map of spectral sequences
\begin{equation}
\begin{gathered}
\begin{tikzcd}
H_q(Y_{p}^{\delta}(M,\phi))\arrow["\alpha_*"]{r}\arrow[Rightarrow]{d}&H_q(Y_{p}(M,\phi))\arrow[Rightarrow]{d}\\H_{p+q}(|Y_{\bullet}^{\delta}(M,\phi)|)\arrow[""]{r}\arrow["\cong"]{d}& H_{p+q}(|Y_{\bullet}(M,\phi)|)\arrow["\cong"]{d}\\ H_{p+q}(\BdH_{0,\partial}(M))\arrow[""]{r}& H_{p+q}(\mathrm{B}|S_{\bullet}(\tH_{0,\partial}(M))|).
\end{tikzcd}
\end{gathered}
\end{equation}
So if we show that the hypothesis implies that $\alpha_*$ is an isomorphism, then we have the isomorphism on the $E^{\infty}$-page which concludes \Cref{main} for $M$. 

First, we shall observe that  that the set of connected components of $Y_{p}^{\delta}(M,\phi)$ is the same as that of $Y_{p}(M,\phi)$. Note that the former set is the same as the set of the orbits of the action of $\tdH_{0,\partial}(M)$ on $B^{\delta}_{p}(M,\phi)$\footnote{In fact, as we shall also use it in \Cref{surface}, the orbit of a $p$-simplex $$((t_0,f_0), (t_1, f_1),\dots, (t_p, f_p))$$ is uniquely determined by the real numbers $t_i$.}. Hence, by the isotopy extension theorem, this set of orbits can identified with $\pi_0(B^{\bf t}_{p}(M,\phi))$ which is the isotopy classes of $p$-simplices relative to the boundary. On the other hand, similar to \Cref{lem2}, we have a map
\[
Y_{p}(M,\phi)\to B^{\bf t}_{p}(M,\phi)\hcoker \tH_{0,\partial}(M),
\]
which is a weak equivalence. Therefore, they have isomorphic set of connected components. But since $\pi_1(\mathrm{B}\tH_{0,\partial}(M))=0$, the long exact sequence for the Borel construction implies that 
\[
\pi_0(B^{\bf t}_{p}(M,\phi))\xrightarrow{\cong}\pi_0(B^{\bf t}_{p}(M,\phi)\hcoker \tH_{0,\partial}(M)).
\]
Therefore, we have $\pi_0(Y_{p}^{\delta}(M,\phi))\cong \pi_0(Y_{p}(M,\phi))$.

For a $p$-simplex $\sigma_p\in B^{\bf t}_{p}(M,\phi)$, let $\text{Stab}(\sigma_p)$ be its stabilizer as a subgroup of $\tH_{0,\partial}(M)$ and let $\text{Stab}^{\delta}(\sigma_p)$ denote the same group with the discrete topology. Similar to \Cref{lem1} and \Cref{lem2}, we have a homotopy commutative diagram
\begin{equation}\label{e}
\begin{gathered}
\begin{tikzcd}
Y_p^{\delta}(M,\phi)\arrow[""]{r}&Y_{p}(M,\phi) \\ \coprod_{\text{orbits}}\mathrm{B}\text{\textnormal{Stab}}^{\delta}(\sigma_p)\arrow["\simeq"]{u}\arrow[""]{r}& \coprod_{\text{orbits}}\mathrm{B}|S_{\bullet}(\text{\textnormal{Stab}}(\sigma_p))|\arrow["\simeq"]{u}.
\end{tikzcd}
 \end{gathered}
 \end{equation}
 Note that if we remove the arcs in $\sigma_p$ from $M$, we obtain a surface with $p$ components such that $p-1$ of them are homeomorphic to disks and one of them is homeomorphic to $M\backslash \phi$. Therefore the hypothesis and  Mather's theorem (\cite{MR0288777}) for disks imply that the map
 \[
 \BdH_{0,\partial}(M\backslash \sigma_p)\to \BH_{0,\partial}(M\backslash \sigma_p),
 \]
 induces a homology isomorphism for all $p$ and all $\sigma_p$. Since $\tH_{0,\partial}(M\backslash \sigma_p)$ is the identity component of $\text{\textnormal{Stab}}(\sigma_p)$, the comparison of fibration similar to the diagram (\ref{ee}) implies that the map 
 \[
 \mathrm{B}\text{\textnormal{Stab}}^{\delta}(\sigma_p)\to \mathrm{B}\text{\textnormal{Stab}}(\sigma_p),
 \]
 induces a homology isomorphism. Therefore, $\alpha_*$ is an isomorphism. 
\end{proof}

\begin{proof}[Proof \Cref{main} for surfaces] Using \Cref{reduction1}, we know that in order to prove \Cref{main} for a closed surface $\Sigma_g$ of genus $g$, it is enough to prove it for a surface $\Sigma_{g,k}$ of genus $g$ and $k$ boundary components for all positive integer $k$. We induct on $-\chi(\Sigma_{g,k})=2g+k-2$. The base case is when $g=0$ and $k=1$ which is homeomorphic to a disk and is given by Mather's theorem (\cite{MR0288777}). In general, if $g>0$, we can choose the $1$-handle $\phi$ so that its two ends lie on the same boundary component and $\Sigma_{g,k}\backslash \phi$ is homeomorphic to $\Sigma_{g-1,k+1}$. Since $-\chi(\Sigma_{g-1,k+1})<-\chi(\Sigma_{g,k})$, our induction hypothesis and \Cref{reduction2} imply that \Cref{main} holds for $\Sigma_{g,k}$. 

So now we suppose that $g=0$ and $k>1$. To reduce the number of boundary components, we can choose the $1$-handle $\phi$ so that its two ends lie on different boundary components and $\Sigma_{0,k}\backslash \phi$ is homeomorphic to $\Sigma_{0,k-1}$. Since $-\chi(\Sigma_{0,k-1})<-\chi(\Sigma_{0,k})$, the induction hypothesis and \Cref{reduction2} imply that \Cref{main} also holds for $\Sigma_{0,k}$ for all $k$. Therefore, it holds for $\Sigma_{g,k}$ for all $g$ and $k$. 
\end{proof}
\begin{rem}
In fact using handle resolutions whose cores have dimensions less than half of the dimension of $M$, one can show that Thurston's theorem for a manifold $M$ whose dimension is larger than $4$ is equivalent to Thurston's theorem for a trivial bordism $N\times D^1$ where $N$ is a manifold whose dimension is $\text{dim}(M)-1$. But it is not known to the author whether for handle of dimension $\text{dim}(M)/2$ or higher, a similar contractibility statement as \Cref{arc} holds.
\end{rem}
\section{Cutting three manifolds into disks} \label{balls}To do exactly similar argument as the case of surfaces, we need to find contractible semi-simplicial spaces that cut the manifold into union of $3$-disks. In this section, disks are $3$-dimensional unless mentioned otherwise. Doing an inductive process to cut a three manifold into disks, however, is harder than the case of surfaces. 

For certain types of three manifolds,  namely for Haken $3$-manifolds, this process of cutting into disks is well known. Recall that $M$ is Haken if it is irreducible and contains a properly embedded two sided incompressible surface. Being an irreducible $3$-manifold means that every embedded $2$-sphere bounds a disk. The existence of this disk allows us to do a similar surgery argument as we did for isotopic arcs in a surface. Recall that a compact connected surface $S$ that is not homeomorphic $S^2$, in $M$ is an incompressible surface, if it is properly embedded $S\cap \partial  M=\partial S$, and the normal bundle of $S$ is trivial and the inclusion $S\hookrightarrow M$ is $\pi_1$ injective. Given the Haken manifold theory, there is a finite sequence of incompressible surfaces so that as we cut a Haken manifold $M$ along those surfaces, we obtain disjoint union of disks. 

The idea is to  induct on the number of prime factors in a prime decomposition of $M$ to reduce Thurston's theorem to the case of Haken manifolds and then use the hierarchy of Haken manifolds to reduce it to the case of disks. 

Let $M\cong P_1\# P_2\#\cdots\# P_n$ be the connected sum of $n$ prime $3$-manifolds. We will define semi-simplicial spaces with contractible realizations that encode different ways of cutting $M$ into the union of its prime factors with certain number of disks removed. By the same argument as the previous section, a spectral sequence argument shows that Thurston's theorem holds for $M$ if it does for manifolds homeomorphic to $P_i$ with certain number of disks removed. We then reduce Thurston's theorem for  such manifolds to the case of the Haken manifolds.

\subsection{Cutting along  separating spheres} We want reduce \Cref{main} for $M$ to $3$-manifolds with fewer prime factors. To do so, we shall define semi-simplicial simplicial sets parametrizing {\it separating spheres} in $M$. By a separating sphere, we mean an embedded sphere that does not bound a disk in $M$.  If $M$ has a sphere boundary, a separating sphere could be isotopic to a sphere boundary component.
\begin{defn}
Let $S(M)$ be a simplicial complex whose set of vertices of $S(M)$ is given by locally flat embeddings of a $2$-sphere $\phi\in \text{Emb}^{\text{lf}}(S^2, M)$ so that its image is a separating sphere. A set of $p+1$ such embeddings constitutes a $p$-simplex if their images are disjoint.
\end{defn}
\begin{defn} We use $S(M)$ to define a semisimplicial set $S_{\bullet}^{\delta}(M)$ and a semisimplicial simplicial set $S_{\bullet}(M)_{\bullet}$ on which $\tH_{0,\partial}(M)$ and $S_{\bullet}(\tH_{0,\partial}(M))$ act respectively.
\begin{itemize}
\item\noindent{\bf Discrete version:} Let $S_0^{\delta}(M)$ be the set of the vertices of $S(M)$ and  the set of the $p$-simplices $S_p^{\delta}(M)$ be all different ways of ordering the $p$-simplices in $S(M)$. In other words, $S_p^{\delta}(M)$ is the set of $(p+1)$-tuples $(v_0,v_1,\dots,v_p)$ so that the set $\{v_0,v_1,\dots,v_p\}$ is a simplex in $S(M)$. 

\item\noindent{\bf Topological version:} Let $S_{\bullet}(M)_{\bullet}$ be a semisimplicial simplicial set whose $k$-simplices $S_{\bullet}(M)_{k}$ in the simplicial direction is given by  tuples of vertices $(v_0(t), v_1(t),\dots, v_{\bullet}(t))$ such that $\{v_0(t),v_1(t),\dots,v_{\bullet}(t)\}$ is a simplex in $S(M)$ for all $t\in \Delta^k$. 
\end{itemize}
\end{defn}
We shall first prove that $S(M)$ is contractible when it is non-empty and then deduce that realizations of  $S_{\bullet}^{\delta}(M)$ and  $S_{\bullet}(M)_{\bullet}$ are contractible.
\begin{lem}\label{claim2}
If $M$ is not a prime manifold, the simplicial complex $S(M)$ is contractible.
\end{lem}
\begin{proof}
Similar to \Cref{claim1}, we want to show that for all $k$, any continuous map $f\colon S^k \to S(M)$ is nullhomotopic.  Without loss of generality, we can assume that for  a triangulation $K$ of $S^k$,  the map $f$ is PL. To find a nullhomotopy for the map $f$, it is enough to homotope it so that its image lies in the star of a vertex in $S(M)$.

As the claim in proof of \Cref{arc}, we can homotope $f$ so that  the vertices in $f(K)$ are pairwise transverse. Let $v_1\in S(M)$ be an embedding whose image is transverse to the spheres represented by the set of vertices in $f(K)$.  To homotope $f$ so that its image lies in $ \text{Star}(v_1)$, we inductively remove the circles in the intersection of  $v_1$ and the spheres in $f(K)$. 
\begin{figure}[h] 

 \begin{tikzpicture}[node distance=6cm]
 \draw [line width=1.05pt] (0,0) circle (2cm);
  \draw [gray, line width=1.05pt] (2,0) circle (1.7cm);
\draw [dashed, line width=1.05pt] (0.8,1.4) arc (60:300:1.6cm);
\draw [dashed, line width=1.05pt] (0.8,1.4) arc (120:240:1.6cm);
\draw [dashed, line width=1.05pt] (1.2,1.2) arc (120:240:1.4cm);
\draw [dashed, line width=1.05pt] (1.2,1.2) arc (60:-60:1.4cm);
\node at (-2, 1.5) {$S$};
\node at (-0.6, 1) {$S'$};
\node at (1.3, 0.6) {$S''$};
\node at (3.8, 1.3) {$S_1$};
 \end{tikzpicture}
 \caption{Surgery on spheres in one dimension lower}\label{surgery} \end{figure} 

Let $S_1$ be the  embedded sphere given by the image of $v_1$. The intersection of the spheres  in $f(K)$ and $S_1$ form a finite number of circles. Among these circles, we choose a maximal family of disjoint circles on $S_1$. Let $C$ be an innermost circle in this family, which is given by the intersection of $S_1$ and a sphere $S$ given by the image of an embedding $f(x)=v\in f(K)$. The circle $C$ bounds a $2$-disk in $S_1$. We can cut $S$ along the circle $C$  and glue two copies of this $2$-disk to obtain two disjoint embedded spheres $S'$ and $S''$ (see \Cref{surgery}). By considering nearby parallel copies, we can assume that $S$, $S'$ and $S''$ are disjoint. Note that at least one of the spheres $S'$ and $S''$ is separating. We assume that $S'$ is separating. Now we shall replace $S$ by $S'$ as the image of the vertex $v$ as follows  (see \Cref{fig1}). We choose an embedding $v'$ whose image is $S'$. By choosing nearby parallel copies of the spheres, we can assume that the vertex $v'$ is connected to $v$ i.e. their corresponding spheres are disjoint.
\begin{figure}[h]
\[
\begin{tikzpicture}[scale=.3]

\draw[line width=1.05pt]  (5,0) arc (0:45: 5);
\draw[line width=1.05pt]  (1.29,4.83) arc (75:105: 5);

\draw[line width=1.05pt]  (5,0) arc (0:-108: 5);
\draw[line width=1.05pt]  (-5,0) arc (180:225: 5);
\draw[line width=1.05pt]  (-5,0) arc (180:135: 5);

\draw[line width=1.05pt] [dashed] (5,0) arc (10: 170: 5 and 1);
\draw[line width=1.05pt]  (5,0) arc (-10: -170: 5 and 1);

\draw[line width=1.05pt] (3,2.5).. controls (2,2) and (1.5, 2.5).. (1,3.2);
\draw[line width=1.05pt] [dashed] (3,2.5).. controls (2,3.2) and (1.5, 3.8).. (1,3.2);

\draw[line width=1.05pt] (-3,2.5).. controls (-2,2) and (-1.5, 2.5).. (-1,3.2);
\draw[line width=1.05pt] [dashed] (-3,2.5).. controls (-2,3.2) and (-1.5, 3.8).. (-1,3.2);

\draw[line width=1.05pt]  (-3,-2.5).. controls (-2,-2) and (-1.5, -2.5).. (-1,-3.2);
\draw[line width=1.05pt] [dashed] (-3,-2.5).. controls (-2,-3.2) and (-1.5, -3.8).. (-1,-3.2);

\draw [line width=1.05pt] (3,2.5) .. controls (7,10) and (4,12)  .. (1,3.2);
\draw [line width=1.05pt] (-3,2.5) .. controls (-7,10) and (-4,12)  .. (-1,3.2);
\draw [line width=1.05pt] (-3,-2.5) .. controls (-7,-10) and (-4,-12)  .. (-1,-3.2);

\begin{scope}[shift={(-6.8,7)}, scale=0.5]
\draw [line width=1.05pt] (4.6,.5) arc (240:300:2.5 and 6.75);
\draw   [line width=1.7pt] (6.87, 0.13) arc (48:150:1.2 and 0.9);
\end{scope}
\begin{scope}[shift={(-6.5,6)}, scale=0.5]
\draw [line width=1.05pt] (4.6,.5) arc (240:300:2.5 and 6.75);
\draw   [line width=1.7pt] (6.87, 0.13) arc (48:150:1.2 and 0.9);
\end{scope}
\begin{scope}[shift={(-6.1,5)}, scale=0.5]
\draw [line width=1.05pt] (4.6,.5) arc (240:300:2.5 and 6.75);
\draw   [line width=1.7pt] (6.87, 0.13) arc (48:150:1.2 and 0.9);
\end{scope}

\begin{scope}[shift={(0.3,6)}, scale=0.5]
\draw [line width=1.05pt] (4.6,.5) arc (240:300:2.5 and 6.75);
\draw   [line width=1.7pt] (6.87, 0.13) arc (48:150:1.2 and 0.9);
\end{scope}

\begin{scope}[shift={(-6.8,-7)}, scale=0.5]
\draw [line width=1.05pt] (4.6,.5) arc (240:300:2.5 and 6.75);
\draw   [line width=1.7pt] (6.87, 0.13) arc (48:150:1.2 and 0.9);
\end{scope}
\begin{scope}[shift={(-6.5,-6)}, scale=0.5]
\draw [line width=1.05pt] (4.6,.5) arc (240:300:2.5 and 6.75);
\draw   [line width=1.7pt] (6.87, 0.13) arc (48:150:1.2 and 0.9);
\end{scope}

\node at (2,1.5) {$S$};
\node at (-2,1.5) {$S'$};
\node at (-2,-1.5) {$S''$};
\end{tikzpicture}
\]
\caption{Separating spheres $S$, $S'$ and $S''$ are depicted in one dimension lower. They bound a $3$-sphere with three disks removed.}\label{fig1}
\end{figure}

If we choose $S'$ sufficiently close to the $2$-disk in $S$ that bounds $C$, then any sphere $S_2$ in the star of $v$ which intersected $S'$ would also intersect this $2$-disk.  However, this cannot happen: since $S_2 \cap S_1 = \emptyset$, and $C$ was chosen to be an innermost circle among a maximal family of disjoint circles given by intersections with $S_1$, no disjoint sphere $S_2$ can intersect the $2$-disk bounded by $C$.  Thus, no vertex in the star of $v$ intersects $S'$, so our modified sphere as the image of $v'$ 
 remains disjoint from all the spheres that the image of $v$ is disjoint from.  In other words, $v'$ is connected to all vertices in the star of $v$. Therefore, we have a simplicial homotopy $F\colon K\times [0,1]\to S(M)$ such that $F(-,1)$ is the same as $F(-,0)$ on all vertices but $x$ and $F(x,1)=v'$. Note that the vertices in the image $F(-,1)\colon K\to S(M)$  have fewer circles in their intersection with $S_1$. By repeating this process, we could homotope the map $f$ to a map whose image lies in the star of $v_1$. Therefore, $f$ is nullhomotopic.
\end{proof}
To prove that realizations of  $S_{\bullet}^{\delta}(M)$ and  $S_{\bullet}(M)_{\bullet}$ are contractible, we need $S(M)$ to have a property that is called {\it weakly Cohen-Macaulay}.

\begin{defn}  A simplicial complex $K$ is called {\it weakly Cohen-Macaulay} of dimension at least $n$ and it is denoted by  $\it{w}CM(K)\geq n$ if it is $(n-1)$-connected and the link of any $p$-simplex is $(n-p-2)$-connected. 
 \end{defn}
For  a simplex $\sigma$ in the simplicial complex $S(M)$, let $M\backslash \sigma$ denote a manifold that is homeomorphic to the manifold obtained from $M$ by cutting it along the spheres in $\sigma$. Note that a link of $\sigma$ is homeomorphic to $S(M\backslash \sigma)$ which is again contractible by \Cref{claim2}. Therefore $S(M)$ is a weakly Cohen-Macaulay of dimension infinity.
\begin{prop}\label{CM}
If $M$ is not a prime manifold, the realizations of  $S_{\bullet}^{\delta}(M)$ and  $S_{\bullet}(M)_{\bullet}$ are contractible.
\end{prop}
\begin{proof}
Similar to \Cref{claim1}, it is enough to show that the realization of the semisimplicial set $S_{\bullet}^{\delta}(M)$ is contractible. But $S_{\bullet}^{\delta}(M)$ is obtained from $S(M)$ by considering all different orderings on simplices. It is a consequence of {\it generalized coloring lemma} (\cite[Theorem 2.4]{galatius2014homological}) that if $wCM(S(M))>n$ then $|S_{\bullet}^{\delta}(M)|$ is at least $(n-1)$-connected (see \cite[Theorem 3.9]{nariman2014homologicalstability} for a similar argument). Therefore, $|S_{\bullet}^{\delta}(M)|$ is contractible. 
\end{proof}
As the case of surfaces, we can use $S_{\bullet}^{\delta}(M)$ and  $S_{\bullet}(M)_{\bullet}$ to define semsimplicial resolutions for $\BdH_{0,\partial}(M)$ and $\mathrm{B}|S_{\bullet}(\tH_{0,\partial}(M))|$. Therefore, similar spectral sequence arguments as \Cref{reduction1} and \Cref{reduction2} imply the following reduction of \Cref{main}.
\begin{prop}\label{reduction3}
If \Cref{main} holds for $M\backslash \sigma$ for all simplices $\sigma$ in $S(M)$, then it also holds for $M$.
\end{prop}
By the uniqueness of the prime decomposition for three manifolds, it is easy to see that $M\backslash \sigma$ is a disjoint union of pieces that are homeomorphic  to either one of the $P_i$'s with certain number of disks removed or $S^3$ with certain number of disks removed. Hence, by repeating \Cref{reduction3} we conclude that having the Thurston theorem for  prime manifolds or $S^3$ with a nonzero number of disks removed, implies Thurston's theorem for all three manifolds that are not prime.

\subsection{Reducing  to the case of Haken manifolds} If $M$ is not prime, by cutting $M$ along sphere systems, we reduced to the case of prime manifolds with a number of disks removed and $S^3$ with a number of disks removed. To reduce these cases further to the case of the Haken manifold, we remove $1$-handles from these pieces. If $M$ is prime, we remove solid tori to reduce it to the case irreducible manifolds with torus boundary components which are known to be Haken.

First, we shall treat the case where $M$ is prime. Recall that the only prime manifold that is not irreducible is $S^1\times S^2$ (see \cite[Proposition 1.4]{hatcher2000notes}).  To remove solid tori, similar to the case of $0$-handles, we need to consider the germs of embeddings around the core of the solid torus.
\begin{defn}Let $f\colon S^1\times D^2\hookrightarrow M$ and $g\colon S^1\times D^2\hookrightarrow M$ be two locally flat embeddings. We say they have the same germ around the cores $f(S^1\times \{0\})$ and $g(S^1\times \{0\})$ if there exists some open neighborhood $U\subset D^2$ of the origin such that $f|_{S^1\times U}=g|_{S^1\times U}$. We shall denote the germ of $f$ around its core by $[f]$.
\end{defn}
\begin{defn}Let $\phi: S^1\times D^2\hookrightarrow M$ be a $\pi_1$-injective embedding. Let $T(M;\phi)$ be a simplicial complex whose vertices are germ of embeddings $f\colon S^1\times D^2\hookrightarrow M$ such that the core of $f$ is isotopic to the core of $\phi$. And $\{[f_0], [f_1], \dots, [f_p]\}$ constitutes a $p$-simplex if the cores of $[f_i]$ are disjoint. 
\end{defn}
\begin{lem}
The simplicial complex $T(M;\phi)$ is weakly Cohen-Macaulay of dimension infinity.
\end{lem}
\begin{proof}
We first show that $T(M;\phi)$ is contractible and then it becomes clear that the links of a simplex is contractible by the same argument. As before, we want to show that for all $k$, any continuous map $f\colon S^k\to T(M;\phi)$ is nullhomotopic. Without loss of generality, we can assume that $f$ is a PL map with respect to a triangulation $K$ on $S^k$. 

Note that since the codimension of the core of an embedded solid torus in $M$ is $2$, if two cores are transverse in $M$, they should be disjoint. Since disjointness is an open condition, we can change $f$ up to homotopy so that all vertices of $f(K)$ are disjoint. We choose another vertex $v\in  T(M;\phi)$ whose core is disjoint from the cores of the vertices in $f(K)$. Therefore, $f(K)\subset \text{Star}(v)$ which implies that $f$ is nullhomotopic. 
\end{proof}
\begin{defn} We define a semisimplicial set $T^{\delta}_{\bullet}(M;\phi)$ and a semisimplicial simplicial set $T_{\bullet}(M;\phi)_{\bullet}$ on which $\tH_{0,\partial}(M)$ and $S_{\bullet}(\tH_{0,\partial}(M))$ act respectively.
\begin{itemize}
\item\noindent{\bf Discrete version:} The set of the $p$-simpleces of  $T^{\delta}_{p}(M;\phi)$ is given by the set of all different orderings on the set of $p$-simplices  of the simplicial complex $T(M;\phi)$. In other words, the $(p+1)$-tuple $([f_0], [f_1], \dots, [f_p])$ of germs of embeddings of solid tori is a $p$-simplex if the cores of $[f_i]$'s are disjoint.  The $i$-th face maps is given by omitting $[f_i]$. 
\item{\bf Topological version} The set of $0$-simplices  $T_{0}(M;\phi)_{\bullet}$ in the semisimplicial direction is the subset of embeddings $f(t)$ in $\text{Emb}_{\bullet}^{\text{lf}}(S^1\times D^2, M)$ whose core $f(t)(S^1\times\{0\})$  is isotopic to the core of $\phi$ for all $t\in \Delta^{\bullet}$.  A $p$-simplex in the semisimplicial direction is given by $(p+1)$-tuples $(f_0(t), f_1(t), \dots, f_p(t))$ in $T_{0}(M;\phi)_{\bullet}^{p+1}$ so that $f_i(t)(S^1\times D^2)$ and $f_j(t)(S^1\times D^2)$ are disjoint for all $i,j$ and $t\in \Delta^{\bullet}$. The face maps in both directions are defined as usual. 
\end{itemize}
\end{defn}
\begin{prop}
The realizations of $T^{\delta}_{\bullet}(M;\phi)$ and $T_{\bullet}(M;\phi)_{\bullet}$ are contractible
\end{prop}
\begin{proof}
It is similar to \Cref{CM}.
\end{proof}
 For a $p$-simplex $\sigma$ in $T_{p}(M;\phi)_0$, let $M\backslash \sigma$ denote a manifold obtained from $M$ by cutting the interior of the solid tori in $\sigma$.
\begin{prop}\label{reduction4} If \Cref{main} holds for $M\backslash \sigma$ for all simplices $\sigma$ in $T_{p}(M;\phi)_0$, then it also holds for $M$. 
\end{prop}
\begin{proof}
Similar to the diagram \ref{e'} in \Cref{disk}, we can define a zig-zag of maps between $T^{\delta}_{\bullet}(M;\phi)$ and $T_{\bullet}(M;\phi)_{\bullet}$ that would lead to a map of spectral sequences as in \Cref{reduction1}. The hypothesis implies the isomorphism on the first page of the spectral sequences. Hence, as in \Cref{reduction1}, it implies \Cref{main} for $M$. 
\end{proof}
\begin{rem}\label{1handle}Note that the only thing we used about the solid torus $\phi$ was the dimension of its core is less than its codimension. Similarly, if  $M$ has boundary, we can define a $1$-handle $\phi\colon D^1\times D^2\to M$ so that its two ends $\phi(\{0\}\times D^2)$ and $\phi(\{1\}\times D^2)$ lie on the boundary of $M$. And then we could define $T^{\delta}_{\bullet}(M;\phi)$ and  $T_{\bullet}(M;\phi)_{\bullet}$ to reduce \Cref{main} to the case of $M$ with certain $1$-handles removed.
\end{rem}
Now we are ready to use \Cref{reduction4}, to treat the case when $M$ is prime. 
\begin{prop}\label{reduction5}
 \Cref{Main} holds for closed prime manifolds, if it holds for all irreducible $3$-manifolds whose boundary components are homeomorphic to a torus.
\end{prop}
\begin{proof}
We choose a $\pi_1$-injective locally flat embedding $\phi\colon S^1\times D^2\hookrightarrow M$. If  $M$ is a closed irreducible $3$-manifold, then for any $p$-simplex $\sigma$ in $T_{p}(M;\phi)_0$, the manifold $M\backslash \sigma$ is still irreducible. Therefore, this case is easily followed from \Cref{reduction4}.

The only prime manifold that is not irreducible is $S^1\times S^2$. Since all the essential embedded spheres in $S^1\times S^2$ are isotopic, all the embedded spheres in the manifold $S^1\times S^2\backslash \sigma$ bound a disk. Hence, by \Cref{reduction4}, the case of $S^1\times S^2$ is also reduced to the case of  irreducible $3$-manifolds with torus boundary components.
\end{proof}
Therefore, if $M$ is prime, we can reduce \Cref{main} to the case of Haken manifolds because irreducible $3$-manifolds with torus boundary are Haken. If $M$ is not prime,  using \Cref{reduction3}, \Cref{reduction4}, and \Cref{reduction5}, we could reduce \Cref{main} for $3$-manifolds to the case of $S^3\backslash \cup_{i=1}^k\text{int}(D^3)$ and $P_i\backslash \cup_{i=1}^k\text{int}(D^3)$ where $P_i$'s are irreducible. The goal is to further reduce these cases to the case for Haken manifolds. As the general strategy is to cut along submanifolds, we always get manifolds with boundary. Furthermore, an irreducible $3$-manifold with boundary is Haken. So to reduce to the Haken manifolds, we want to cut along submanifolds to get an irreducible $3$-manifold with boundary.

\begin{prop}
\Cref{Main} holds for $S^3$ with a number of disks removed if it holds for Haken $3$-manifolds.
\end{prop}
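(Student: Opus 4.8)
The plan is to cut $M=S^2\times[0,1]$ along \emph{vertical tubes} and feed the pieces into the $1$-handle machinery of \Cref{sec2}. Concretely, fix a standard $1$-handle $\phi^1\colon D^1\times D^2\hookrightarrow M$ whose core is an arc joining the two boundary spheres and isotopic rel endpoints to $\{\mathrm{pt}\}\times[0,1]$, and form the associated semisimplicial spaces $\mathcal{H}^{\delta}_\bullet(M,\phi^1)$ and $\mathcal{H}_\bullet(M,\phi^1)$ of \Cref{def2}, together with the augmentations $X^{\delta}_\bullet(M,\phi^1)\xrightarrow{g_{\phi^1}}\BdH_0(M,\partial M)$ and $X_\bullet(M,\phi^1)\xrightarrow{f_{\phi^1}}\BH_0(M,\partial M)$; no preliminary $0$-handle reduction is needed, as $M$ already has boundary.

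First I would check that these augmentations are weak equivalences. The core of a $1$-handle in a $3$-manifold has codimension $2$, so we are in the easy range $q<\dim(M)/2$ of \Cref{codim}: making cores transverse automatically makes them disjoint, and the argument of \Cref{claim} applies without the surgery step required for surfaces in \Cref{eq:7}. Explicitly, a PL map $f\colon S^k\to|\mathcal{H}^{\delta}_\bullet(M,\phi^1)|$ hits only finitely many simplices, so one can choose the germ $[\phi]$ of a vertical tube whose core is disjoint from the cores of all vertices in the image of $f$; inserting $[\phi]$ into every simplex gives a semisimplicial null-homotopy, so $f$ is coned off. Hence $|\mathcal{H}^{\delta}_\bullet(M,\phi^1)|$ is weakly contractible, and so is $|\mathcal{H}_\bullet(M,\phi^1)|$ by the argument of \Cref{claim1}; then \cite[Lemma 2.1]{randal2009resolutions} makes $g_{\phi^1}$ and $f_{\phi^1}$ weak equivalences.

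Next I would invoke the spectral-sequence comparison used to prove \Cref{reduction}, which is insensitive to the dimension: combining the homotopy-type analysis of the $q$-handle resolutions $X^{\delta}_\bullet(M,\phi^1)$ and $X_\bullet(M,\phi^1)$ with the homology isomorphism $\BdwH_0(M\backslash\sigma,\partial(M\backslash\sigma))\xrightarrow{H_*\text{-iso}}\BdwH_{0,c}(M([\sigma]))$, it reduces \Cref{Main} for $M$ to \Cref{Main} for $M\backslash\sigma$ for every simplex $\sigma$ of $\mathcal{H}_\bullet(M,\phi^1)$. If $\sigma$ is a $p$-simplex, then $M\backslash\sigma$ is $M$ with the interiors of $p+1$ disjoint vertical tubes removed; a general-position argument straightening the tubes simultaneously (each core is isotopic rel endpoints to a standard vertical arc, so after an isotopy the family is described by a path in $\mathrm{Conf}_{p+1}(S^2)$, which is trivialized by isotopy extension on $S^2$) identifies $M\backslash\sigma$ with $(S^2\backslash(p+1\text{ disks}))\times[0,1]$, a handlebody of genus $p$. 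For $p=0$ this is $D^3$, where \Cref{Main} holds by Mather's theorem \cite{MR0288777}; for $p\ge 1$ a positive-genus handlebody is irreducible and contains a non-boundary-parallel meridian disk, hence is Haken, so \Cref{Main} holds by hypothesis. Thus the comparison map is an isomorphism on $E^1$-pages and hence on $E^\infty$-pages, so $\iota_*$ is an isomorphism for $M$.

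The one step that needs genuine care is the identification of $M\backslash\sigma$ as a handlebody: a priori the individually unknotted vertical tubes could be ``braided'' in a way that changes the homeomorphism type of their complement, so one must justify the simultaneous straightening above. Everything else is a routine transcription of the $1$-handle arguments already developed for surfaces, which is exactly why this case is cheap once \Cref{Main} is available for Haken $3$-manifolds.
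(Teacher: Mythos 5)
Your argument is essentially the paper's proof: cut along the semisimplicial space of vertical $1$-handles joining the two boundary spheres, get contractibility of the realization from codimension-$2$ transversality (no surgery step needed), and run the spectral-sequence comparison of \Cref{reduction} to reduce to the complements $M\backslash\sigma$. As for the one step you flag as needing care, you can sidestep the simultaneous-straightening issue entirely: $M\backslash\sigma$ is irreducible (any embedded sphere there cannot separate the two boundary spheres, so it bounds a ball in $S^2\times[0,1]$, and that ball misses the tubes) with connected boundary of genus $p$, so for $p\geq 1$ it is Haken regardless of whether it is a standard handlebody, and for $p=0$ it is a ball.
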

\begin{proof}
Let $M=S^3\backslash \cup_{i=1}^k\text{int}(D^3)$. We can assume that $k>1$ because otherwise $M$ is homeomorphic to a $3$-disk and Thurston's theorem in this case is deduced from the Mather theorem (\cite{MR0288777}).  We choose a $1$-handle $\phi:D^1\times D^2\hookrightarrow M$ so that $\phi(\{0\}\times D^2)$ and $\phi(\{1\}\times D^2)$ are subsets of different sphere boundary components. Hence, by \Cref{1handle}, Thurston's theorem holds for $M$ if it holds for $M\backslash \sigma$ for all simplices $\sigma$ in $T_{\bullet}(M;\phi)_0$. But for a $p$-simplex $\sigma$, the manifold $M\backslash \sigma$ has fewer boundary components. By repeating this process, we can reduce the theorem for $M$ to a handlebody which has one boundary component. But a handlebody is Haken.
\end{proof}
Note that the sphere boundaries in $P_i\backslash \cup_{j=1}^{k}\text{int}(D^3)$ destroys the irreducibility. So to reduce Thurston's theorem for $P_i\backslash \cup_{j=1}^{k}\text{int}(D^3)$ to the case for Haken manifolds, we first cut along certain $1$-handles to reduce the number of sphere boundaries. But unlike the case of $S^3$ with a number of disks removed, we want to increase genus of each boundary component. Because it is not clear how to do the same procedure as for $S^3\backslash \cup_{i=1}^k\text{int}(D^3)$ to get a handlebody at the end, we first need to show that $P_i\backslash \cup_{j=1}^{k}\text{int}(D^3)$  are not simply connected.
\begin{lem}\label{poincare}
If a $3$-manifold $M$ with boundary is simply connected, it is obtained from $S^3$ by removing the interior of a union of disjoint  disks in $S^3$. 
\end{lem}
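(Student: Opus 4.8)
The plan is to cap off the boundary and invoke the Poincar\'e conjecture. Assume, as throughout the paper, that $M$ is compact and connected; note that since $\pi_1(M)=1$ the orientation character $\pi_1(M)\to\bZ/2$ is trivial, so $M$ is automatically orientable. The strategy has three steps: (i) show that every component of $\partial M$ is a $2$-sphere, (ii) glue a $3$-ball onto each such sphere to obtain a closed simply connected $3$-manifold, and (iii) apply the Poincar\'e conjecture to identify that closed manifold with $S^3$, so that $M$ is recovered by deleting the interiors of the (disjoint) capping balls.

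For step (i): since $M$ is simply connected, $H_1(M;\bQ)=0$, so the inclusion-induced map $H_1(\partial M;\bQ)\to H_1(M;\bQ)$ vanishes. By the standard ``half lives, half dies'' lemma for compact orientable $3$-manifolds with boundary, the image of this map has dimension exactly $\tfrac12\dim_{\bQ}H_1(\partial M;\bQ)$, so $H_1(\partial M;\bQ)=0$; hence each component of $\partial M$ is a closed orientable surface with trivial first rational homology, that is, a $2$-sphere. (Equivalently, Poincar\'e--Lefschetz duality gives $H_2(M,\partial M;\bZ)\cong H^1(M;\bZ)=0$, and then the long exact sequence of the pair $(M,\partial M)$ forces $H_1(\partial M;\bZ)=0$ directly.) Write $\partial M=S^2_1\sqcup\cdots\sqcup S^2_r$. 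For step (ii): let $\hat M$ be the closed orientable $3$-manifold obtained by gluing a $3$-ball $D^3_j$ onto $S^2_j$ for each $j$; an iterated application of van Kampen's theorem shows that capping off a spherical boundary component leaves the fundamental group unchanged, so $\pi_1(\hat M)\cong\pi_1(M)=1$. For step (iii): by the Poincar\'e conjecture $\hat M$ is homeomorphic to $S^3$, and since the balls $D^3_1,\dots,D^3_r$ are disjoint by construction, deleting their interiors from $\hat M\cong S^3$ returns $M$.

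This argument is essentially routine, so there is no serious obstacle: the one substantive input is the Poincar\'e conjecture, which is by now standard and of the same flavour as the prime decomposition and Haken hierarchy results already invoked in this section. The only point that deserves a moment's care is the identification of $\partial M$ in step (i), where one uses that $M$ is orientable (automatic here) together with the half-lives-half-dies lemma; and implicitly that $M$ is connected, which is forced by the phrasing of the conclusion since $S^3$ minus a nonempty disjoint union of open balls is connected.
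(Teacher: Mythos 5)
Your proposal is correct and follows essentially the same route as the paper: show each boundary component is a $2$-sphere (your parenthetical Poincar\'e--Lefschetz duality argument is exactly the one the paper uses), cap off with balls to get a simply connected closed $3$-manifold, and apply Perelman's resolution of the Poincar\'e conjecture. The extra care you take about orientability, connectedness, and the preservation of $\pi_1$ under capping is a welcome but minor elaboration of the same argument.
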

\begin{proof}
It is enough to show that the boundary $\partial M$ is homeomorphic to union of $S^2$'s. Because if we fill in the sphere boundaries by disks, we obtain a simply connected closed $3$-manifold which has to be homeomorphic to $S^3$ by Perelman's theorem (\cite{perelman2002entropy, perelman2003ricci}). Since $M$ is simply connected, we have $H_1(M)=0$, so by the Poincar\' e-Lefschetz duality, we also have $H_2(M,\partial M)\cong H^1(M)=0$. The homology long exact sequence for the pair $(M, \partial M)$ implies that $H_2(M,\partial M)\to H_1(\partial M)\to H_1(M)$ is exact. Therefore, $H_1(\partial M)=0$ which implies that $\partial M$ is homeomorphic to a union of $S^2$'s.
\end{proof}
Let $Q$ be the manifold obtained from $P$ by removing the interior of $m$ disjoint disks in $P$. To prove Thurston's theorem for $Q$, we want to cut $1$-handles from $Q$ to make it irreducible. Not that since $P$ is not simply connected and is not homeomorphic to sphere, so by \Cref{poincare}, the manifold $Q$ is not simply connected either. Let $\partial_iQ$ be the $i$-th boundary component. We choose  an arc $\gamma_i$ with the two ends on $\partial_iQ$ so that the arc $\gamma_i$ with a path between its two ends on the boundary is non-trivial in the fundamental group of $P$. 

Let $\phi_i: D^1\times D^2\hookrightarrow Q$ be a $1$-handle whose core is $\gamma_i$. Let us denote the manifold obtained from $Q$ by removing the interior of the handle $\phi_i$ by $Q\backslash \cup_{i=1}^m \phi_i$. 
\begin{lem}$Q\backslash \cup_{i=1}^m \phi_i$ is irreducible.
\end{lem}
\begin{proof}Given that $P$ is irreducible, every embedded sphere in $Q\backslash \cup_{i=1}^m \phi_i$ bounds a disk in $P$. If this disk contains any of the boundary components with a $1$-handle attached to it, then the core of the $1$-handle, union the path between the two ends of the core on the boundary, would be trivial in the fundamental group of $P$, which is a contradiction.
\end{proof}
\begin{prop}\label{irr}
\Cref{main} holds for $Q$, if it does for Haken manifolds.
\end{prop}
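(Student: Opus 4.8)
The plan is to cut the $1$-handles $\phi_1,\dots,\phi_m$ out of $Q$ and feed this into the spectral-sequence reduction of \Cref{reduction}. Fix an external collar for $\phi_1$ with $\phi_1(D^1\times D^2)\cap\partial Q=\phi_1(S^0\times D^2)$ and form the $1$-handle resolutions $X^{\delta}_{\bullet}(Q,\phi_1)$ and $X_{\bullet}(Q,\phi_1)$ of \Cref{def2}. Since the core of a $1$-handle has codimension $2$ in the $3$-manifold $Q$ and $1<\dim Q/2$, transversality of cores already forces disjointness, so exactly as in \Cref{codim} and as in the proof of \Cref{torus} the realizations $|\mathcal{H}^{\delta}_{\bullet}(Q,\phi_1)|$ and $|\overline{\mathcal{H}}^{\delta}_{\bullet}(Q,\phi_1)|$ are weakly contractible; then $|\mathcal{H}_{\bullet}(Q,\phi_1)|$ is weakly contractible by the bisemisimplicial argument of \Cref{claim1}. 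Consequently $|g_{\phi_1}|$ and $|f_{\phi_1}|$ have weakly contractible fibers, so $X^{\delta}_{\bullet}(Q,\phi_1)$ and $X_{\bullet}(Q,\phi_1)$ resolve $\BdH_0(Q,\partial Q)$ and $\BH_0(Q,\partial Q)$ respectively.

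Next I would run the comparison of \Cref{reduction}: the zig-zag $X^{\delta}_{\bullet}(Q,\phi_1)\leftarrow\overline{\mathcal{H}}^{\delta}_{\bullet}(Q,\phi_1)\hcoker\tdH_0(Q,\partial Q)\to X_{\bullet}(Q,\phi_1)$ corresponds, on each $p$-simplex $\sigma$, to the zig-zag $\BdwH_{0,c}(Q([\sigma]))\xleftarrow{H_*-\text{iso}}\BdwH_0(Q\backslash\sigma,\partial(Q\backslash\sigma))\to\BwH_0(Q\backslash\sigma,\partial(Q\backslash\sigma))$, so \Cref{Main} holds for $Q$ as soon as it holds for $Q\backslash\sigma$ for every simplex $\sigma$. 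For a $p$-simplex $\sigma$ the parallel copies of $\phi_1$ wall off $p$ balls inside $\phi_1(D^1\times D^2)$, so $Q\backslash\sigma$ is homeomorphic to the disjoint union of these $p$ handlebodies together with $Q\backslash\phi_1$. Handlebodies are Haken, so \Cref{Main} holds for them by hypothesis, and it remains to prove \Cref{Main} for $Q\backslash\phi_1$.

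Now I would iterate. The handles $\phi_2,\dots,\phi_m$ are disjoint from $\phi_1$ — their cores lie over distinct boundary spheres of $Q$ — and each $\gamma_j$, together with a path joining its endpoints on $\partial_j Q$, is still nontrivial in $\pi_1(P)$; so the discussion preceding this statement applies at each stage to show that $Q\backslash(\phi_1\cup\cdots\cup\phi_j)$ has one fewer sphere boundary component than its predecessor and that $Q\backslash\cup_{i=1}^{m}\phi_i$ is irreducible with nonempty boundary, hence Haken. Applying the first two paragraphs to $\phi_2$ inside $Q\backslash\phi_1$, then to $\phi_3$ inside $Q\backslash(\phi_1\cup\phi_2)$, and so on, reduces \Cref{Main} for $Q$ successively to \Cref{Main} for $Q\backslash\phi_1$, for $Q\backslash(\phi_1\cup\phi_2)$, $\dots$, and finally to \Cref{Main} for handlebodies and for the Haken manifold $Q\backslash\cup_{i=1}^{m}\phi_i$ — handlebodies being Haken, both are covered by the hypothesis. (Throughout, the passage between \Cref{Main} and its identity-component form $\BdH_0\to\BH_0$ is the elementary $\pi_0$ spectral-sequence argument at the start of \Cref{sec2}.) Equivalently one could cut all $m$ handles at once via a combined resolution $\mathcal{H}^{\delta}_{\bullet}(Q;\phi_1,\dots,\phi_m)$ in the style of \Cref{primedecomp}, which is contractible by the same codimension count.

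The only content beyond bookkeeping is geometric: that $Q\backslash\cup_{i=1}^{m}\phi_i$ is genuinely irreducible — this is exactly where \Cref{poincare} and the $\pi_1$-essential choice of the arcs $\gamma_i$ enter, and it is already established above — together with the elementary fact that the slabs cut off by parallel $1$-handles are handlebodies. The homotopy-theoretic input, contractibility of the $1$-handle resolutions, is the easy ``transversality implies disjointness'' case of the circle of ideas around \Cref{eq:7} and \Cref{codim}; it requires none of the Whitney-disk surgery needed for $1$-handles on surfaces, so I do not expect any obstacle there.
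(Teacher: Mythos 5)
Your homotopy-theoretic skeleton (contractibility of $|\mathcal{H}^{\delta}_{\bullet}(Q,\phi_i)|$ because transverse codimension-two cores are automatically disjoint, then the zig-zag and spectral-sequence comparison of \Cref{reduction}) matches the paper. The gap is geometric: your identification of $Q\backslash\sigma$, for a $p$-simplex $\sigma\in\mathcal{H}^{\delta}_{p}(Q,\phi_1)$, as ``$p$ handlebodies walled off inside $\phi_1(D^1\times D^2)$, disjoint union $Q\backslash\phi_1$'' is false. That is the surface picture, where the cores are codimension one and consecutive parallel arcs cut off disks. In the $3$-manifold $Q$ the cores are codimension-two arcs, so removing several disjoint tunnels never disconnects anything: $Q\backslash\sigma$ is a single connected manifold in which the sphere $\partial_1Q$ has been replaced by a closed surface of genus $p+1$ (and note the handles of a simplex need not even lie inside $\phi_1(D^1\times D^2)$ --- only their cores are required to be disjoint and isotopic rel boundary to parallel copies of the core). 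Since \Cref{reduction} demands Thurston's theorem for $Q\backslash\sigma$ for \emph{every} $p$ and every $\sigma$, your iteration, which reduces everything to the one fixed manifold $Q\backslash\phi_1$ plus handlebodies, does not cover the manifolds actually appearing in the $E^1$-page: they vary with $p$ and are neither handlebodies nor homeomorphic to $Q\backslash\phi_1$.

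The repair is the paper's actual induction, which is on the number of sphere boundary components rather than on the fixed list $\phi_1,\dots,\phi_m$. For any $p$-simplex $\sigma$ over $\phi_i$, the manifold $Q\backslash\sigma$ is again of the same type --- an irreducible manifold with balls removed and finitely many $\pi_1$-essential tunnels drilled --- and it has one fewer sphere boundary component, since the $i$-th boundary now has genus $p+1>0$; the irreducibility argument preceding the proposition (every embedded sphere bounds a ball in $P$, and that ball cannot contain a boundary sphere carrying an essential handle) applies verbatim to each such $Q\backslash\sigma$ after the remaining sphere boundaries are treated. Once no sphere boundary components are left, the resulting manifolds are irreducible with positive-genus boundary, hence Haken, and the hypothesis of the proposition applies; unwinding the induction then gives the statement for $Q$. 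With the claim about $Q\backslash\sigma$ replaced by this bookkeeping, the rest of your argument goes through and coincides with the paper's proof.
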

\begin{proof}
For any simplex $\sigma$ in $T_{\bullet}(M;\phi_i)_0$, the $i$-th boundary component of $Q\backslash \sigma$ is no longer sphere. Hence, by repeating \Cref{reduction4} for all boundary components, we could reduce \Cref{main} for $Q$ to the case of irreducible manifolds with boundary. 
\end{proof}

\subsection{Finishing \Cref{main}: the case of Haken $3$-manifolds with boundary}  By the theory of Haken manifolds (\cite{MR0160196}), we know that they have a hierarchy, where they can be split up into 3-disks along incompressible surfaces. Let $S$ be a surface with boundary and $\psi\colon S\times \bR\hookrightarrow M$ be a proper locally flat bicollared embedding of an incompressible surface. Given the case of Haken manifolds which are lower compared to $M$ in the Haken hierarchy, we inductively prove \Cref{main} for $M$ by cutting along incompressible surfaces.
\begin{defn} We define a semisimplicial set $K^{\delta}_{\bullet}(M,\psi)$ and a semisimplicial simplicial set $K_{\bullet}(M,\psi)_{\bullet}$ on which $\tH_{0,\partial}(M)$ and $S_{\bullet}(\tH_{0,\partial}(M))$ act respectively.
\begin{itemize}
\item\noindent{\bf Discrete version:} The $0$-simplices of $K^{\delta}_{\bullet}(M,\psi)$ are given by pairs $(t, \phi)$ such that $\phi\in \text{Emb}_0^{\text{lf}}(S,M)$ satisfying
\begin{equation}\label{bdry}
\phi(\partial S)=\psi(\partial S\times\{t\, \vec{e}\})),
\end{equation}
where $\vec{e}$ is the unit basis vector of $\bR$, and $\phi(S)$ is isotopic to the surface $\psi(S\times\{t\, \vec{e}\}))$.

The set of  $p$-simplices   $K^{\delta}_{p}(M,\psi)$, consists of $(p+1)$-tuples $$((t_0,\phi_0), (t_1,\phi_1),\dots, (t_p, \phi_p)),$$ in $K^{\delta}_{0}(M,\psi)^{p+1}$ so that $t_0<t_1<\cdots<t_p$ and  the embedded surfaces  $\phi_i(S)$ are disjoint. The face maps are given by forgetting the embeddings. Given that the $t$-coordinate in $(t,\phi)\in K^{\delta}_{0}(M,\psi)$ is uniquely determined by $\phi$, we shall write $\phi$ for a vertex and refer to its $t$-coordinate by $t_{\phi}$.  
\item\noindent{\bf Topological version:} The $0$-simplices $K_{\bullet}(M,\psi)_{0}$ in the simplicial direction is the same seimisimplicial set as $K^{\delta}_{\bullet}(M,\psi)$. Its $k$-simplices in the simplicial direction are given by tuples  $$((t_0,\phi_0(s)), (t_1,\phi_1(s)),\dots, (t_{\bullet}, \phi_{\bullet}(s))),$$ in $K^{\delta}_{0}(M,\psi)^{\bullet+1}$ for all $s\in \Delta^k$.

\end{itemize}
\end{defn}

\begin{prop}\label{cutincomp}
 Let $M$ be a Haken manifold with boundary. The realizations $|K^{\delta}_{\bullet}(M,\psi)|$ and $|K_{\bullet}(M,\psi)_{\bullet}|$ are contractible.
 \end{prop}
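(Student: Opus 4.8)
The plan is to mirror closely the proofs of \Cref{eq:7} and \Cref{primedecomp}. Given $k\geq 0$ and a class in $\pi_k(|\mathcal{K}^{\delta}_{\bullet}(M,\psi)|)$, fix a triangulation $K$ of $S^k$ and represent the class by a PL map $f\colon K\to |\mathcal{K}^{\delta}_{\bullet}(M,\psi)|$; since the fat realization carries a CW structure, showing that every such $f$ is null‑homotopic proves weak contractibility and hence contractibility, exactly as in the remark following \Cref{claim}. The goal is to produce a single vertex $v\in \mathcal{K}^{\delta}_{0}(M,\psi)$ whose core is disjoint from the cores of all the (finitely many) vertices occurring in $f(K)$, with $t$‑coordinate $t_v$ chosen larger than every $t$‑coordinate occurring in $f(K)$. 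Then $v$ can be appended as the top vertex to each simplex of $f(K)$, which produces a semi‑simplicial null‑homotopy of $f$ into $\text{Star}(v)$, precisely as in \Cref{claim}, \Cref{eq:7} and \Cref{primedecomp}. When $S$ is closed and $\psi$ lies in the interior the same scheme applies without any $t$‑coordinate.

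The first step is to perturb $f$ so that the cores of the vertices in $f(K)$ are pairwise transverse, by the same parallel‑copy‑and‑small‑isotopy procedure used in \Cref{eq:7}. Here the boundary of the core of a vertex $(t,[\phi])$ is $\psi(\partial S\times\{t\})$, so two vertices with distinct $t$‑coordinates have cores that already coincide with disjoint surfaces near $\partial M$; after a transversality perturbation supported in the interior, the intersection of any two cores is therefore a closed $1$‑manifold, i.e.\ a disjoint union of circles, with no arcs to worry about. (For closed $S$ this is automatic.) Next I choose a germ $v$ of an embedded parallel copy of $\psi$, with $t_v$ large, whose core is transverse to the cores of all vertices of $f(K)$.

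The core of the argument is to homotope $f$, changing the value on one vertex of $K$ at a time, so as to strictly decrease the total number of intersection circles between the core of $v$ and the cores of the vertices of $f(K)$. Pick an intersection circle $C$ that is innermost on the core of $v$, bounding a disk $D$ in the core of $v$ whose interior is disjoint from every core occurring in $f(K)$, and say $C$ lies on the core of a vertex $[\phi_0]$. Since $C$ bounds the disk $D$ in $M$ it is null‑homotopic in $M$, so by $\pi_1$‑injectivity of the incompressible surface it is null‑homotopic, hence bounds a disk $D'$, in the core of $[\phi_0]$. Then $D\cup D'$ is an embedded $2$‑sphere in $M$, and since $M$ is Haken, hence irreducible, it bounds a ball $B$. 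Surgering the core of $[\phi_0]$ by replacing $D'$ with a copy of $D$ pushed slightly off the core of $v$ produces a surface isotopic rel boundary to the core of $[\phi_0]$ — the isotopy sweeping $D'$ across $B$ — hence a legitimate vertex of $\mathcal{K}^{\delta}_{\bullet}(M,\psi)$, and, because $D$ is innermost, the new core meets the core of $v$ in strictly fewer circles (the circle $C$, together with any intersection circles inside $D'$, are removed, and the pushed‑off copy of $D$ contributes none). Because $D$ is innermost and $C$ is disjoint from the cores of the vertices adjacent to $[\phi_0]$, a small parallel copy of the surgered surface is disjoint from the core of $[\phi_0]$ and from the cores of all neighbours of $[\phi_0]$ in $f(K)$, so replacing the value of $f$ at the vertex mapping to $[\phi_0]$ by this new germ is a homotopy of $f$. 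Iterating removes all intersections with the core of $v$, so $f(K)\subset \text{Star}(v)$ and $f$ is null‑homotopic; this proves \Cref{cutincomp}, and with it \Cref{Main} for three manifolds via \Cref{reduction}.

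I expect the main obstacle to be the bookkeeping in this last step: checking that each surgery genuinely descends to a homotopy of the map $f$ — that the surgered germ is connected in $\mathcal{K}^{\delta}_{\bullet}(M,\psi)$ to the germs of its neighbours in $f(K)$ and differs from the old germ by a legitimate move — which is exactly where the innermost choice of $C$, the freedom to take parallel copies, and the two genuinely three‑dimensional inputs (irreducibility of $M$ to bound the sphere $D\cup D'$, incompressibility of $S$ to produce the disk $D'$ on the surface) all get used. Everything else is formal and parallels \Cref{eq:7}, \Cref{primedecomp}, \Cref{torus}, and the fiber‑bundle and spectral‑sequence machinery already in place for surfaces.
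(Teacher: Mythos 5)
Your overall scheme (transversality, then surgery on an innermost intersection circle to push $f$ into $\text{Star}(v)$) matches the paper's, and your treatment of the inessential circles is essentially the paper's Case~1. But there is a genuine gap at the very first step of your surgery argument: you \emph{pick an intersection circle $C$ that is innermost on the core of $v$, bounding a disk $D$ in the core of $v$}. An innermost circle on a surface of positive genus need not bound a disk at all: since $S$ is an arbitrary incompressible surface in the Haken hierarchy (typically of positive genus, closed or with boundary), two isotopic, transverse copies of it can intersect entirely in circles that are \emph{essential} on both surfaces, and no amount of choosing innermost curves produces a compressing disk $D$. Your subsequent chain of deductions ($C$ bounds $D$ in $M$, hence is null-homotopic, hence bounds $D'$ on the other core by incompressibility, hence $D\cup D'$ bounds a ball by irreducibility) therefore never gets started for such circles. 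This is not a bookkeeping issue: it is exactly the case your argument cannot reach.

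The paper handles this by splitting into two cases. Case~1 (null-homotopic intersection circles) is your argument, with the innermost choice made by minimal area among the null-homotopic circles. Case~2 (no null-homotopic circles remain) is the essential content you are missing: one passes to the covering $p:\tilde M\to M$ corresponding to $\pi_1(\phi(S))\subset\pi_1(M)$, where the preimage components of $\phi(S)$ separate $\tilde M$, and invokes Laudenbach's theorem that each component of a lifted core in a minimal complementary region cobounds a unique trivial h-cobordism with a lift of $\phi(S)$, mapped homeomorphically by $p$. Pushing across this trivial cobordism (rather than across a ball) is what removes the essential intersection circles. Without this covering-space/h-cobordism input---which is the genuinely three-dimensional, Haken-specific part of the proposition---the proof is incomplete.
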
  
 \begin{proof}
Similar to $\Cref{claim1}$, it is enough to show that $|K^{\delta}_{\bullet}(M,\psi)|$ is contractible. But since the $t$-coordinates of vertices of a simplex in $K^{\delta}_{\bullet}(M,\psi)$ is ordered, there is a natural order on simplices. Therefore $|K^{\delta}_{\bullet}(M,\psi)|$ is a simplicial complex. Let us represent an element of the homotopy group $f:S^k\to |K^{\delta}_{\bullet}(M,\psi)|$ by a PL map with respect to some triangulation $K$ on $S^k$. Similar to \Cref{arc} we shall arrange $f$ so that the vertices of $f(K)$ are pairwise transverse. 

Now we choose another vertex $\phi\in K^{\delta}_{0}(M,\psi)$ so that $\phi(S)$ is transverse to all vertices in $f(K)$ and its $t$-coordinate is different from that of vertices in $f(K)$.  Therefore, the vertices of $f(K)$ do not intersect $\phi(S)$ on the boundary and each component of the intersections is homeomorphic to a circle. We shall change the map $f$ up to homotopy to remove these circles to arrange $f(K)\subset \text{Star}(\phi)$.

\noindent{\bf Step 1:} We first remove all circles on $\phi(S)$ that are nullhomotopic in $M$. Since $\phi(S)$ is incompressible, any circle on $\phi(S)$ that is nullhomotopic in $M$, is in fact nullhomotopic in $\phi(S)$. Hence such circles bound a $2$-disk on $\phi(S)$. 

Among the set of circles in the intersection of $\phi(S)$ and the vertices of $f(K)$, we choose a maximal family of disjoint circles on $\phi(S)$. Since the circles in this family are disjoint, there is an innermost circle $C$. Suppose $C$ is in the intersection of $\phi(S)$ and $\phi_0(S)$ where $\phi_0=f(v)\in f(K)$. Since $\phi_0(S)$ is also incompressible, the circle $C$ bounds a $2$-disk $D'$ on $\phi_0(S)$ and a $2$-disk $D$ on $\phi(S)$. Since $M$ is irreducible, the embedded sphere $D\cup D'$ bounds a disk $B$ in $M$. 

If $\phi'\in \text{Star}(\phi_0)$, then $\phi'(S)$ cannot intersect $D$ and $D'$. The latter is clear, because $\phi''(S)$ does not even intersect $\phi_0(S)$. But if $\phi'(S)$ intersects $D$, since it is disjoint from $\partial D'=C$, their intersection would give circles inside $D$. Given that $C$ was an innermost circle, this is a contradiction. 

By pushing $D'$ across the disk $B$ toward $D$ and considering a nearby parallel copy, we obtain a vertex $\phi''\in K^{\delta}_{0}(M,\psi)$ so that $\phi''(S)$ is disjoint from $\phi_0(S)$ and $\phi'(S)$ for all $\phi'\in \text{Star}(\phi_0)$. Hence, we can find a homotopy $F\colon K\times [0,1]\to |K^{\delta}_{\bullet}(M,\psi)|$ where $F(-,0)=f,\, F(v,1)=\phi'$ and $F(-,1)$ is the same as $f$ on all vertices other than $v$. Therefore, by repeating this process, we can assume that the circles in the intersection of $\phi(S)$ and the vertices of $f(K)$ are not nullhomotopic. 

\noindent{\bf Step 2:} Now we assume that all circles in the intersection of $\phi(S)$ and the vertices in $f(K)$ are not nullhomotopic in $\phi(S)$. In the previous case to remove circles, we used embedded disks in $M$ where we thought of the disk $B$ as a ``{\it pinched product}" between two $2$-disks $D$ and $D'$. By a pinched product $P$ over  $\Sigma$ a surface with boundary, we mean the quotient of the product $\Sigma\times [0,1]$ by all segments $\{x\}\times [0,1]$ where $x\in \partial \Sigma$. This pinched product is a handlebody with corner $\partial \Sigma$ whose boundary $\partial P$ is a union of two copies of $\Sigma$ that we denote them by $\partial_{-}P$ and $\partial_{+}P$. These two copies  intersect in the corner $\partial\partial_{-}P=\partial\partial_{+}P$. To reduce the number of circles in the intersection of $\phi(S)$ and a vertex $\phi'\in K^{\delta}_{0}(M,\psi)$, we shall find a pinched product $P$ so that $\partial_{+}P$ lies on $\phi'(S)$ and $\partial_{-}P$ lies on $\phi(S)$. Then by pushing $\phi'(S)$ across $P$ and a little beyond, we obtain a new vertex $\phi''\in K^{\delta}_{0}(M,\psi)$ that has fewer circles in its intersection with $\phi(S)$. 
\begin{figure}[ht]

\begin{tikzpicture}[scale=.2]

%
%
%
 \draw[line width=1.05pt] [dashed] (0,.-2.5) arc (-90:90:0.5 and 2.5);
\draw [line width=1.05pt] (0,2.5) arc (90:270:0.5 and 2.5);
\draw [line width=1.05pt] (0,2.5)--(13,2.5);
\draw [line width=1.05pt] (13,-2.5) arc (-90:90:2.5);
\draw [line width=1.05pt] (0,-2.5)--(13,-2.5);
\draw [line width=1.1pt] (5.1,.0) arc (250:295:2.5 and 6.75);
\draw   [line width=1.7pt] (6.87, 0.13) arc (48:150:1.2 and 0.9);

\begin{scope}[shift={(6,0)}]
\draw [line width=1.05pt] (5.1,.0) arc (250:295:2.5 and 6.75);
\draw   [line width=1.7pt] (6.87, 0.13) arc (48:150:1.2 and 0.9);
\end{scope}

\begin{scope}[shift={(0,7)}]
\draw [line width=1.05pt] (3,2.5)--(13,2.5);
\draw [line width=1.05pt] (13,-2.5) arc (-90:90:2.5);
\draw [line width=1.05pt] (5,-2.5)--(13,-2.5);
\draw [line width=1.1pt] (5.1,.0) arc (250:295:2.5 and 6.75);
\draw   [line width=1.7pt] (6.87, 0.13) arc (48:150:1.2 and 0.9);

\begin{scope}[shift={(6,0)}]
\draw [line width=1.05pt] (5.1,.0) arc (250:295:2.5 and 6.75);
\draw   [line width=1.7pt] (6.87, 0.13) arc (48:150:1.2 and 0.9);
\end{scope}

\end{scope}
\begin{scope}[shift={(0,-7)}]
\draw [line width=1.05pt,dashed] (5,2.5)--(13,2.5);
\draw [line width=1.05pt,dashed] (13,-2.5) arc (-90:90:2.5);
\draw [line width=1.05pt,dashed] (3,-2.5)--(13,-2.5);
\draw [line width=1.1pt,dashed] (5.1,.0) arc (250:295:2.5 and 6.75);
\draw   [line width=1.7pt,dashed] (6.87, 0.13) arc (48:150:1.2 and 0.9);

\begin{scope}[shift={(6,0)}]
\draw [line width=1.05pt,dashed] (5.1,.0) arc (250:295:2.5 and 6.75);
\draw   [line width=1.7pt,dashed] (6.87, 0.13) arc (48:150:1.2 and 0.9);
\end{scope}

\end{scope}
\draw [line width=1.05pt] (15.5,7)--(15.5,0);
\draw  [line width=1.05pt] (0,2.5) to [out= 0, in=180] (3,9.5);
\draw  [line width=1.05pt] (0,-2.5) to [out= 0, in=180] (5,4.5);

\draw  [line width=1.05pt,dashed] (0,2.5) to [out= 0, in=180] (5,-4.5);
\draw  [line width=1.05pt,dashed] (0,-2.5) to [out= 0, in=180] (3,-9.5);


\end{tikzpicture}  
  \caption{Pushing across the pinched product}

 \end{figure}

Similar to \cite[Page 346]{MR0420620} and \cite[Step 3]{hatcher1999spaces}, we use  the covering  $p:(\widetilde{M},\tilde{x})\to (M,x)$ corresponding to the subgroup $\pi_1(\phi(S),x)$ in $\pi_1(M,x)$ for a base point $x\in \phi(S)$ to do surgery. There is a homeomorphic lift $\widetilde{\phi}(S)$ of $\phi(S)$ in $\widetilde{M}$ containing $\tilde{x}$. For each vertex $f(v)\in f(K)$, if $f(v)(S)$ intersects $\phi(S)$, we choose a lift $\widetilde{f(v)}(S)$ of the surface $f(v)(S)$ that intersects $\widetilde{\phi}(S)$, otherwise we choose any lift of $f(v)(S)$. Given our choice of covering, the map $p$ restricted to $\widetilde{f(v)}(S)$ is a homeomorphism. In this  way, we have a lift $\widetilde{f}(K)$ to $|K_{\bullet}^{\delta}(\widetilde{M};\widetilde{\phi})|$. We shall use $K_{\bullet}^{\delta}(\widetilde{M};\widetilde{\phi})$ as a bookkeeping to change $f$ inductively up to a simplicial homotopy so that its image lies in the star of $\phi(S)$. 

As Hatcher showed in \cite[Step 3]{hatcher1999spaces}, the incompressibility of $\phi(S)$ implies that every connected component of $p^{-1}(\phi(S)) $ separates $\widetilde{M}$ in two components. Let $\widetilde{S}$ be a nearby parallel copy of $\widetilde{\phi(S)}$. For each component $S_i$ of $ p^{-1}(\phi(S))$, let $M_{S_i}$ denote the component of $\widetilde{M}\backslash S_i$ that does not contain the boundary $\partial \widetilde{S}$. We order these components by inclusion. Let $M_{S_i}$ be a minimal component that intersects the union $\cup_{v\in K}\widetilde{f(v)}(S)$. 

Let $C_v$ be a component of $\widetilde{f(v)}(S)\cap M_{S_i}$. Laudenbach showed (see \cite[Corollary II.4.2]{MR0356056} and also \cite[page 8]{hatcher1999spaces}) that there is a unique pinched product $P_v$ so that $\partial_{+} P=C_v$ and $\partial_{-}P_v$ lies on $S_i$. For those $v\in K$ that $\widetilde{f(v)}(S)$ intersects $S_i$, we have a partial order on the subsurfaces $\partial_{-}P_v$ in $S_i$ given by inclusion.

To be able to use the pinched products $P_v$, to change $\widetilde{f}$ up to simplicial homotopy, we need \Cref{tech}. Given this lemma,  the rest of the argument is follows. We choose a maximal family $V$ of vertices $v \in K$ so that their corresponding subsurfaces $\partial_{-}P_v$ on $S_i$ are either disjoint or one includes the other. Let $z\in V$ be a vertex for which $\partial_{-}P_z$ is innermost. Then by moving $\partial_{+}P_z$ along $P_z$ and a little beyond, we obtain a vertex $y\in K_{0}^{\delta}(\widetilde{M};\widetilde{\phi})$ that does not intersect $S_i$ in $\partial_{-}P_z$ anymore. Note that the restriction of the covering map $p:\widetilde{M}\to M$ to the pinched product $P_z$ is a homeomorphism. Hence $p(P_z)$ is also a pinched product. 

Now we shall observe that if $w\in \text{Star}(z)$, then $f(w)\in \text{Star}(p(y))$. Because if $f(w)(S)$ intersects $p(y)(S)$, then  it has to intersect the pinched product $p(P_z)$. Therefore, its lift $ \widetilde{f(w)}(S)$ intersects $\partial_{-}P_z$ and by \Cref{tech} below, we would have $\partial_{-}P_w\subset \partial_{-}P_z$ which contradicts the fact that $\partial_{-}P_z$ was innermost. Also by considering nearby parallel copy, we can assume that $p(y)(S)$ is also disjoint from $f(z)(S)$. Therefore, $p(y)$ is connected to $f(z)$ and $f(w)$ for all $w\in \text{Star}(z)$. Hence, we can find a homotopy $F\colon K\times [0,1]\to |K_{\bullet}^{\delta}({M};\phi)|$ where $F(-,0)=f,\, F(z,1)=p(y)$ and $F(-,1)$ is the same as $f$ on all vertices other than $z$. Therefore, by repeating this process, we can reduce the number of the circles in the intersection of  the vertices of $f(K)$ and  $\phi(S)$ which would give a homotopy of $f$ to a map whose image is in $\text{Star}(\phi)$.
\end{proof}
\begin{lem}\label{tech}
 Suppose $w\in \textnormal{\text{Star}}(v)$ and $\widetilde{f(w)}(S)$  intersects $S_i$, then $\partial_{-}P_v$ and $\partial_{-}P_w$ are either disjoint or one contains the other. 
\end{lem}
\begin{proof}
Suppose the contrary, so the intersection of  $\partial_{-}P_v$ and $\partial_{-}P_w$ is a subsurface $\Sigma\subset S_i$ such that its boundary $\partial \Sigma$ decomposes into two parts $\partial_{v}\Sigma\subset \partial \partial_{-}P_v$ and $\partial_{w}\Sigma\subset \partial \partial_{-}P_w$. Over $\Sigma$ in $P_v$, we shall choose   a ``{\it partial}" pinched product $Q_v$  which is a submanifold in $P_v$ with corner (see Figure \ref{ppinch}). The manifold $Q_v$ is homeomorphic to the quotient of $\Sigma\times[0,1]$ given by pinching $\{x\}\times [0,1]$ for all $x\in \partial_{v}\Sigma$. The boundary of the partial pinched product $Q_v$ is the union of $\partial_{-}Q_v=\Sigma$, a piece $\partial_0 Q_v$ that is homeomorphic to $\partial_w\Sigma\times [0,1]$ and $\partial_{+}Q_v$ that lies on $\partial_{+}P_v$. 
\begin{figure}[ht]\label{ppinch}

\begin{tikzpicture}[scale=.2]

 \draw[line width=1.05pt] [dashed] (0,.-2.5) arc (-90:90:0.5 and 2.5);
\draw [line width=1.05pt] (0,2.5) arc (90:270:0.5 and 2.5);
\draw [line width=1.05pt] (0,2.5)--(23,2.5);
\draw [line width=1.05pt] (0,2.5)--(-4,2.5);
\draw [line width=1.05pt, dashed] (-8,2.5)--(-4,2.5);
\draw [line width=1.05pt, dashed] (25,2.5)--(23,2.5);

\draw [line width=1.05pt] (1,20)--(15,20);
\draw [line width=1.05pt] (1,15)--(15,15);

\draw [line width=1.05pt] (0,-2.5)--(23,-2.5);
\draw [line width=1.05pt] (0,-2.5)--(-4,-2.5);
\draw [line width=1.05pt, dashed] (-8,-2.5)--(-4,-2.5);
\draw [line width=1.05pt, dashed] (25,-2.5)--(23,-2.5);

\draw [line width=1.05pt] (1,20) arc (90:270:2.5);

\draw [line width=1.1pt] (5.1,.0) arc (250:295:2.5 and 6.75);
\draw   [line width=1.7pt] (6.87, 0.13) arc (48:150:1.2 and 0.9);

\begin{scope}[shift={(6,17.5)}]
\draw [line width=1.05pt] (5.1,.0) arc (250:295:2.5 and 6.75);
\draw   [line width=1.7pt] (6.87, 0.13) arc (48:150:1.2 and 0.9);
\end{scope}

\begin{scope}[shift={(0.5,17.5)}]
\draw [line width=1.05pt] (5.1,.0) arc (250:295:2.5 and 6.75);
\draw   [line width=1.7pt] (6.87, 0.13) arc (48:150:1.2 and 0.9);
\end{scope}

\begin{scope}[shift={(-3,17.5)}]
\draw [line width=1.05pt] (5.1,.0) arc (250:295:2.5 and 6.75);
\draw   [line width=1.7pt] (6.87, 0.13) arc (48:150:1.2 and 0.9);
\end{scope}

\begin{scope}[shift={(19,0)}]
 \draw[line width=1.05pt] [dashed] (0,.-2.5) arc (-90:90:0.5 and 2.5);
\draw [line width=1.05pt] (0,2.5) arc (90:270:0.5 and 2.5);
\end{scope}

\begin{scope}[shift={(9,7)}]
 \draw[line width=1.05pt] [dashed] (0,.-2.5) arc (-90:90:0.5 and 2.5);
\draw [line width=1.05pt] (0,2.5) arc (90:270:0.5 and 2.5);
\end{scope}

\begin{scope}[shift={(9,17.5)}]
 \draw[line width=1.05pt] [dashed] (0,.-2.5) arc (-90:90:0.5 and 2.5);
\draw [line width=1.05pt] (0,2.5) arc (90:270:0.5 and 2.5);
\end{scope}

\begin{scope}[shift={(16,0)}]
\draw [line width=1.05pt] (5.1,.0) arc (250:295:2.5 and 6.75);
\draw   [line width=1.7pt] (6.87, 0.13) arc (48:150:1.2 and 0.9);
\end{scope}

\begin{scope}[shift={(-9,0)}]
\draw [line width=1.05pt] (5.1,.0) arc (250:295:2.5 and 6.75);
\draw   [line width=1.7pt] (6.87, 0.13) arc (48:150:1.2 and 0.9);
\end{scope}
\begin{scope}[shift={(0,7)}]

\draw [line width=1.05pt] (3,2.5)--(13,2.5);
\draw [line width=1.05pt] (13,-2.5) arc (-90:90:2.5);
\draw [line width=1.05pt] (5,-2.5)--(13,-2.5);
\draw [line width=1.1pt] (5.1,.0) arc (250:295:2.5 and 6.75);
\draw   [line width=1.7pt] (6.87, 0.13) arc (48:150:1.2 and 0.9);

\begin{scope}[shift={(6,0)}]
\draw [line width=1.05pt] (5.1,.0) arc (250:295:2.5 and 6.75);
\draw   [line width=1.7pt] (6.87, 0.13) arc (48:150:1.2 and 0.9);
\end{scope}

\end{scope}
\begin{scope}[shift={(0,-7)}]

\end{scope}

\draw  [line width=1.05pt] (0,2.5) to [out= 0, in=180] (3,9.5);
\draw  [line width=1.05pt] (0,-2.5) to [out= 0, in=180] (5,4.5);

\draw  [line width=1.05pt] (19,-2.5) to [out= 140, in=0] (15,15);
\draw  [line width=1.05pt] (19,2.5) to [out= 110, in=0] (15,20);

\draw  [line width=1.05pt, dotted] (19,-2.5) to [out= 175, in=2] (9,4.5);
\draw  [line width=1.05pt, dotted] (19,2.5) to [out= 195, in=-60] (9,9.5);

\node at (7.5, 11) {$\partial_{+}P_v$};
\node at (20.5, 20) {$\partial_{+}P_w$};

\node at (9.5, -4) {$\Sigma$};
\node at (0, -4) {$\partial_v\Sigma$};
\node at (19.5, -4) {$\partial_w\Sigma$};

\end{tikzpicture}  
  \caption{The dotted line is $\partial_0 Q_v$}
   \label{ppinch}

 \end{figure}

We shall  change $\partial_0 Q_v$ up to isotopy to make it disjoint from $P_w$.  Since $w\in \textnormal{\text{Star}}(v)$, we know $\partial_{+}P_v$ and $\partial_{+}P_w$ are disjoint. Therefore, $\partial_{+}Q_v$ either lies inside or outside of $P_w$. First, let us assume that it is inside of $P_w$. Each connected component $L_i$ of $\partial_0 Q_v$  is homeomorphic to a cylinder. We first change $\partial_0 Q_v$ relative to its boundary up to isotopy to make it transverse to the surface $\partial_{+}P_w$. The intersection on $L_i$ are circles. If a circle is nullhomotopic on $L_i$, by irreducibility of $\widetilde{M}$ and incompressibility of $\partial_{+}P_w$, we can remove it by an isotopy. So we assume all the circles in the  intersection $L_i\cap \partial_{+}P_w$ are disjoint isotopic circles on the cylinder $L_i$. 

Let  $L_i^{\alpha}$ be a piece of the cylinder $L_i$ that lies outside of $P_w$ and bounds two consecutive circles $C^{\alpha}_1$ and $C^{\alpha}_2$ on $L_i$. Since $C^{\alpha}_1$ and $C^{\alpha}_2$ are also isotopic on $\partial_{+}P_w$, they bound a cylinder $L_w^{\alpha}$ on $\partial_{+}P_w$. 
Hence, by the Laudenbach theorem (\cite[Corollary II.4.2]{MR0356056}) again these two cylinders are isotopic. So we can push  $\partial_0 Q_v$ by an isotopy inside $P_w$. Therefore, the surface $\partial_{+}P_v$ lies inside $P_w$ but this contradicts the incompressibility of $\partial_{+}P_v$. Because the fundamental group of $P_v$ and $P_w$ are the same as that of $\partial_{-}P_v$ and $\partial_{-}P_w$. Since the surfaces $\partial_{-}P_v$ and $\partial_{-}P_w$ are not nested, incompressibility implies that the fundamental groups $P_v$ and $P_w$ as subgroups of $\pi_1(\widetilde{M},\tilde{x})$ are not nested either. Hence, the surface $\partial_{+}P_v$ cannot lie inside $P_w$ which is a contradiction. Similarly if $\partial_{+}Q_v$ lies outside, we arrive at a contradiction by showing that $\partial_{+}P_w$ lies inside $P_v$.
\end{proof}
Using \Cref{cutincomp}, we can resolve $\BdH_{0,\partial}(M)$ and $\mathrm{B}|S_{\bullet}(\tH_{0,\partial}(M))|$ and we have a natural map between them. Therefore, exactly the same argument as \Cref{reduction2}, implies that \Cref{main} holds if it holds for $M\backslash \sigma_p$ for all $\sigma_p\in K^{\delta}_p(M;\psi)$. So, now we can do an inductive argument to prove the Thurston theorem for Haken manifolds with boundary.
\begin{thm}
\Cref{main} holds for Haken manifolds with boundary.
\end{thm}
\begin{proof}
First we assume that $M$ is a handlebody and we induct on the genus. The base case is the disk which is the Mather theorem. We choose an incompressible disk $\psi\colon D^2\to M$. Note that for a $p$-simplex $\sigma_p\in K_p^{\delta}(M;\psi)$, the manifold $M\backslash \sigma_p$ is homeomorphic to the disjoint union of $p$ disks and a handlebody of lower genus. So by induction Thurston's theorem holds for $M\backslash \sigma_p$ for all $p$ and all $p$-simplices $\sigma_p$. Therefore, it also holds for $M$. 

Now for a general Haken manifold $M$, we have a finite Haken hierarchy i.e. there are finite number of incompressible surfaces in $M$ such that if we cut $M$ along those surfaces, we obtain disjoint union of disks. We shall induct on the Haken hierarchy. For an incompressible surface $\psi\colon S\to M$ and a $p$-simplex $\sigma_p\in  K_p^{\delta}(M;\psi)$, the manifold $M\backslash \sigma_p$ is homeomorphic to the disjoint union of $p$ handlebodies, each homeomorphic to $S\times [0,1]$, and the manifold $M\backslash \psi$ which is lower in the hierarchy. Therefore, by induction and the previous case, Thurston's theorem holds for all $M\backslash \sigma_p$. Hence, it also holds for $M$. 
\end{proof}
\section{On the homotopy type of $\tH_{0,\partial}(M)$}\label{sec4} In this section, we use some of the semisimplicial resolutions in previous sections to give new proofs of the contractibility of $\tH_{0,\partial}(M)$ when $M$ is a hyperbolic surface (\cite{hamstrom1974homotopy}) or when it is a Haken $3$-manifold with boundary (\cite{hatcher1999spaces, MR0420620}). 

Our strategy is to show that in these cases the classifying space  $\BH_{0,\partial}(M)$ is acyclic. Since it is simply connected, Whitehead's theorem implies that it should be weakly contractible. Therefore,  the weak contractibility of $\tH_{0,\partial}(M)$ follows from the weak contractibility of its delooping. 

The statements hold for diffeomorphism groups of these manifolds but since we defined resolutions for homeomorphism groups we give the argument for homeomorphism groups. Also another convenience of working with homeomorphism groups is that we can use the Thurston theorem \ref{main} that the map 
\[
\mathrm{B}\tdH_{0,\partial}(M)\to \BH_{0,\partial}(M),
\]
induces a homology isomorphism. Therefore, if we want to show that $\tH_{0,\partial}(M)$ is weakly contractible, it is enough to show that $\tdH_{0,\partial}(M)$ is an acyclic group. 
\subsection{The case of hyperbolic surfaces} In this section we prove
\begin{thm}\label{surface}The group $\tdH_{0,\partial}(\Sigma)$ is an acyclic group when $\Sigma$ is a hyperbolic surface.
\end{thm}
\begin{proof} We consider the case of closed surfaces and surfaces with boundary separately. 

\noindent{\bf Case 1:} Suppose $\Sigma$ has a nonempty boundary. We induct on $-\chi(\Sigma)$. The base case is when $\Sigma$ is a disk which is the Mather theorem (\cite{MR0288777}). If the genus $g(\Sigma)$ is not zero, we choose an arc $\phi$ whose two ends lie on a same boundary component so that cutting along $\phi$ decreases the genus. If the genus is zero, we choose an arc $\phi$ whose two ends lie on different boundary components so that cutting along $\phi$ decreases the number of boundary components.  In either case, the realization of $B_{\bullet}^{\delta}(\Sigma,\phi)$ is contractible (see \Cref{arc}) which gives a semisimplicial resolution 
\[
Y_{\bullet}^{\delta}(\Sigma,\phi)\to \mathrm{B}\tdH_{0,\partial}(\Sigma).
\]
Therefore, as we discussed in \Cref{reduction2}, we have a spectral sequence
\[
E^1_{p,q}=H_q(Y_{p}^{\delta}(\Sigma,\phi))\Rightarrow H_{p+q}(\mathrm{B}\tdH_{0,\partial}(\Sigma)).
\]
The homotopy type of $Y_{p}^{\delta}(\Sigma,\phi)$ is the same as $\coprod_{\text{orbits}}\mathrm{B}\text{\textnormal{Stab}}^{\delta}(\sigma_p)$ where $\sigma_p$  varies over the set of the orbits of the action of $\tdH_{0,\partial}(\Sigma)$ on $B_{p}^{\delta}(\Sigma,\phi)$. 

\noindent{\bf Claim:}  For all $\sigma_p$, the group $\text{\textnormal{Stab}}^{\delta}(\sigma_p)$ is an acyclic group. 

\noindent {\it Proof of the claim:}
Let $\Sigma\backslash \sigma_p$ denote a surface obtained from $\Sigma$ by cutting along the arcs in $\sigma_p$. This surface is homeomorphic to the disjoint union of $\Sigma\backslash \phi$ with $p$ disjoint disks. Hence using the induction hypothesis and the Mather theorem for disks, the group $\tdH_{0,\partial}(\Sigma\backslash \sigma_p)$ is acyclic. Recall from diagram (\ref{ee}), we have a fibration 
\[
\mathrm{B}\tdH_{0,\partial}(\Sigma\backslash \sigma_p)\to \mathrm{B}\text{\textnormal{Stab}}^{\delta}(\sigma_p)\to \mathrm{B}\pi_0(\text{\textnormal{Stab}}(\sigma_p)).
\]
Therefore, if we show that $\pi_0(\text{\textnormal{Stab}}(\sigma_p))$ is trivial, we can conclude that $\text{\textnormal{Stab}}^{\delta}(\sigma_p)$ is an acyclic group. Note that $\pi_0(\text{\textnormal{Stab}}(\sigma_p))$ is the kernel of the map
\[
\pi_0(\tH_{0,\partial}(\Sigma\backslash \sigma_p))\to \pi_0(\tH_{0,\partial}(\Sigma)). 
\]
But this kernel is trivial (see \cite[Corollary 4.2]{MR1752295} for an elementary proof) i.e. if $f\in \tH_{0,\partial}(\Sigma)$ fixes $\sigma$, then it is isotopic to the identity relative the arcs in $\Sigma$. Hence $H_q(\mathrm{B}\text{\textnormal{Stab}}^{\delta}(\sigma_p))=0$ unless $q=0$ in which case it is isomorphic to $\bZ$. $\blacksquare$

On the other hand two $p$-simplices $\sigma=(\phi_0,\phi_1,\dots,\phi_p)$ and $\sigma'=(\phi'_0,\phi'_1,\dots,\phi'_p)$ are on the same orbit if and only if the corresponding $t$-coordinates $t_{\phi_i}=t_{\phi'_i}$ are the same for all $i$. Therefore, the set of orbits of the action of $\tdH_{0,\partial}(\Sigma)$ on $B_p(\Sigma, \phi)$ is the same as  $\text{Conf}_{p+1}(\bR)$ the set configurations of $p+1$ points on the real line.

Hence, $E^1$-page is concentrated in the first line when $q=0$ and $E^1_{0,p}=\bZ[\text{Conf}_{p+1}(\bR)]$. But the chain complex $(\bZ[\text{Conf}_{p+1}(\bR)], d_1)$ calculates the homology of $|\text{Conf}_{\bullet+1}(\bR)|$ which is an infinite simplex on $\bR$. Therefore, the spectral sequence converges to zero in positive degrees which implies that $\tdH_{0,\partial}(\Sigma)$ is an acyclic group. 

\noindent {\bf Case 2:} Suppose $\Sigma$ is a closed surface. Reducing the case of closed surfaces to the case of surfaces with boundary could be done using the long exact sequence for the fibration $\tH(\Sigma)\to \text{Emb}(D^2,\Sigma)$ and Birman's exact sequence. But here, we give an argument using the $0$-handle resolution which might be useful for closed hyperbolic $3$-manifolds as we shall discuss in \Cref{Gabai}. 

In this case we use the semisimplicial set $A_{\bullet}^{\delta}(\Sigma)$ (see \Cref{defn2}) and the resolution 
\[
X_{\bullet}^{\delta}(\Sigma)\to  \mathrm{B}\tdH_{0}(\Sigma).
\]
Let $e_p\in A^{\bf t}_p(M)$ be an embedding of $p+1$ disjoint disks and let $[e_p]\in A_p^{\delta}(M)$ denote its germ. In \Cref{lem1}, we showed that $X_p^{\delta}(\Sigma)$ has the same homotopy type as $ \mathrm{B}\text{\textnormal{Stab}}^{\delta}([e_p])$. But given \Cref{lem3}, the spectral sequence for the semisimplicial resolution $X_{\bullet}^{\delta}(\Sigma)$ can be written as
\begin{equation}\label{s1}
E^1_{p,q}=H_q(\mathrm{B}\text{\textnormal{Stab}}^{\delta}(e_p))\Rightarrow H_{p+q}(\mathrm{B}\tdH_{0}(\Sigma)).
\end{equation}
Again we have a fibration 
\[
\mathrm{B}\tdH_{0,\partial}(\Sigma\backslash e_p)\to \mathrm{B}\text{\textnormal{Stab}}^{\delta}(e_p)\to \mathrm{B}\pi_0(\text{\textnormal{Stab}}(e_p)),
\]
where the group $\tdH_{0,\partial}(\Sigma\backslash e_p)$ is acyclic by the case 1. Therefore, we have
\begin{equation}\label{er}
\mathrm{B}\text{\textnormal{Stab}}^{\delta}(e_p)\to \mathrm{B}\pi_0(\text{\textnormal{Stab}}(e_p)),
\end{equation}
is a homology isomorphism. 

On the other hand, the group $\pi_0(\text{\textnormal{Stab}}(e_p))$ is the kernel of 
\[
\pi_0(\tH_{0,\partial}(\Sigma\backslash e_p))\to \pi_0(\tH_{0,\partial}(\Sigma)). 
\]
To determine $\pi_0(\text{\textnormal{Stab}}(e_p))$, consider the Kan fibration  given by the parametrized isotopy extension and \Cref{emb}
\[
S_{\bullet}((\tH_{0,\partial}(\Sigma\backslash e_p))\to S_{\bullet}(\tH_{0,\partial}(\Sigma))\to S_{\bullet}(A^{\bf t}_p(\Sigma)).
\]
The long exact sequence of homotopy groups for this fibration implies that we have a short exact sequence 
\begin{equation}\label{short}
\pi_1(A^{\bf t}_p(\Sigma))\to\pi_0(\tH_{0,\partial}(\Sigma\backslash e_p))\to \pi_0(\tH_{0}(\Sigma)). 
\end{equation}

\noindent {\bf Claim:} $\pi_1(A^{\bf t}_p(\Sigma))\cong \pi_0(\text{\textnormal{Stab}}(e_p))$

\noindent {\it Proof of the claim:} Given the exact sequence (\ref{short}), we only need to show that the  map $\pi_1(A^{\bf t}_p(\Sigma))\to \pi_0(\tH_{0,\partial}(\Sigma\backslash e_p))$ is injective. The group $\pi_1(A^{\bf t}_p(\Sigma))$ is known as the framed pure surface braid group of $\Sigma$. The pure surface braid group (not framed) is the fundamental group of the space of ordered configurations of points $\text{Conf}_{p+1}(\Sigma)$. These groups sit in a short exact sequence
\[
1\to \bZ^{p+1}\to \pi_1(A^{\bf t}_p(\Sigma))\to \pi_1(\text{Conf}_{p+1}(\Sigma))\to 1, 
\]
where $\bZ^{p+1}$ is the group generated by the Dehn twists around the boundary components in $\Sigma\backslash e_p$.  These twist and all their nonzero powers induce nontrivial inner automorphisms of the fundamental group $\Sigma\backslash e_p$ which is a free group. Therefore,  $\bZ^{p+1}$ as a subgroup of $\pi_1(A^{\bf t}_p(\Sigma))$ injects into $\pi_0(\tH_{0,\partial}(\Sigma\backslash e_p))$ and in fact we have a short exact sequence
\[
1\to \bZ^{p+1}\to\pi_0(\tH_{0,\partial}(\Sigma\backslash e_p))\to \pi_0(\tH_{0}(\Sigma,  c(e_p)))\to 1,
\]
where $\pi_0(\tH_{0}(\Sigma, c(e_p)))$ is the mapping class group of $\Sigma$ with marked points, $c(e_p)$, the centers of the disks in $e_p$. Therefore, to show that $$\pi_1(A^{\bf t}_p(\Sigma))\to\pi_0(\tH_{0,\partial}(\Sigma\backslash e_p)),$$ is injective, it is enough to show the natural map 
\begin{equation}\label{conf}
\pi_1(\text{Conf}_{p+1}(\Sigma))\to \pi_0(\tH_{0}(\Sigma, c(e_p))),
\end{equation}
is injective. This map is induced by the long exact sequence of homotopy groups for the fibration (see also \cite[Section 4]{mccarty1963homeotopy}) given by an evaluation map
\[
\tH_0(\Sigma)\to \text{Conf}_{p+1}(\Sigma).
\]
It is known that $\pi_1(\text{Conf}_{p+1}(\Sigma))$ does not have a center for hyperbolic surfaces (see \cite[Proposition 1.6]{paris1999geometric}). Therefore, the composite of the maps
\[
\pi_1(\text{Conf}_{p+1}(\Sigma))\to \pi_0(\tH_{0}(\Sigma, c(e_p)))\to \text{Aut}(\pi_1(\text{Conf}_{p+1}(\Sigma))),
\]
which is given by inner automorphisms, is injective.  Therefore, the map in (\ref{conf}) is injective. $\blacksquare$

Since a hyperbolic surface and its frame bundle are aspherical, one can use the fibration $A^{\bf t}_p(\Sigma)\to A^{\bf t}_{p-1}(\Sigma)$ inductively to conclude that $A^{\bf t}_p(\Sigma)$ is also aspherical. Therefore, we have 
\[
A^{\bf t}_p(\Sigma)\xrightarrow{\simeq} \mathrm{B}\pi_1(A^{\bf t}_p(\Sigma))\xrightarrow{\simeq}\mathrm{B}\pi_0(\text{\textnormal{Stab}}(e_p)).
\]
Given that the map (\ref{er}) is a homology isomorphism, $A^{\bf t}_p(\Sigma)$ and $\mathrm{B}\text{\textnormal{Stab}}^{\delta}(e_p)$ have the same homology groups. We have a zig-zag of semisimplicial maps 
\[
X_{\bullet}^{\delta}(\Sigma)\leftarrow A^{\bf t, \delta}_{\bullet}(\Sigma)\hcoker \tdH_{0}(\Sigma)\to A^{\bf t}_{\bullet}(\Sigma)\hcoker \tH_{0}(\Sigma)\leftarrow A^{\bf t}_{\bullet}(\Sigma),
\]
where the first two maps induce isomorphisms on $E^1$-pages (see \Cref{redbdry}). Hence, we obtain a comparison map of of the spectral sequence (\ref{s1}) with 
\[
E^1_{p,q}=H_q(A^{\bf t}_p(\Sigma))\Rightarrow H_{p+q}(|A^{\bf t}_{\bullet}(\Sigma)|),
\]
which is an isomorphism on the $E^1$-page. Since $|A^{\bf t}_{\bullet}(\Sigma)|$ is weakly contractible (see \Cref{claim}), the acyclity of the space $\mathrm{B}\tdH_{0}(\Sigma)$ follows. 
\end{proof}
\subsection{The case of Haken $3$-manifolds with boundary}Hatcher computed the homotopy type of the group of homemorphisms of Haken manifolds in \cite{MR0420620}. Given his proof of Smale's conjecture, his computation of  homeomorphisms carries over  to diffeomorphisms of the Haken manifolds (\cite{hatcher1999spaces}).  Hatcher developed a subtle disjunction technique to study embedding spaces (\cite{hatcher1983proof, MR0420620, MR624946}). In the case of  Haken $3$-manifolds, he improved Laudenbach's surgery techniques (\cite[Chapter 2.5]{MR0356056}) to do a parametrized surgery on the space of incompressible surfaces. Here, we also use Laudenbach's results, but instead of Hatcher's disjunction argument, we prove that $\mathrm{B}\tdH_{0,\partial}(M)$ is acyclic for Haken $3$-manifolds with boundary similar to the case of surfaces.

\begin{thm}Let $M$ be Haken $3$-manifold with non-empty boundary. The space $\BdH_{0,\partial}(M)$ is acyclic.
\end{thm}
\begin{proof}
We induct on the Haken hierarchy. We know that there are finite number of incompressible surfaces that if we cut along those surfaces, we obtain disjoint union of disks. Let $\psi:S\hookrightarrow M$ be an incompressible surface in $M$. Since $K^{\delta}_{\bullet}(M,\psi)$ has a contractible realization (see \Cref{cutincomp}), similar to the case of surfaces with boundary, we obtain a spectral sequence
\[
E^1_{p,q}=H_q(\coprod_{\text{orbits}}\mathrm{B}\text{Stab}^{\delta}(\sigma_p))\Rightarrow H_{p+q}(\BdH_{0,\partial}(M)),
\]
where $\sigma_p$ varies over a representative of the set of orbits of the action of $\tdH_{0,\partial}(M)$ on $K^{\delta}_{p}(M,\psi)$. First note that the orbit of $\sigma_p$ is completely determined by the $t$-coordinates of the surfaces in $\sigma_p$. Therefore, the set of orbits of the action of $\tdH_{0,\partial}(M)$ on $K^{\delta}_{p}(M,\psi)$ is identified with $\text{Conf}_{p+1}(\bR)$. 

\noindent {\bf Claim:} $\text{Stab}^{\delta}(\sigma_p)\cong \tdH_{0,\partial}(M\backslash \sigma_p)$.

This is a consequence of the Laudenbach result (\cite[pp. 48-62]{MR0356056}). In other words, we need to show that $\pi_0(\text{Stab}(\sigma_p))$ is trivial. Laudenbach showed that if $f\in \tH_{0,\partial}(M)$ fixes an incompressible surface $S$, then $f$ is isotopic to the identity relative to $S$ which implies that $\pi_0(\text{Stab}(\sigma_p))$ is trivial for all $p$.

On the other hand, note that $M\backslash \sigma_p$ is homeomorphic to the disjoint union of $M\backslash \psi$ with $p$ disjoint handlebodies homeomorphic to $S\times [0,1]$. By the induction hypothesis, we know that $\tdH_{0,\partial}(M\backslash \psi)$ is an acyclic group. For the handlebodies we can induct on the genus and do exactly the same argument to cut along incompressible disks to deduce that $\tdH_{0,\partial}(S\times [0,1])$ is also acyclic. Hence, $\text{Stab}^{\delta}(\sigma_p)$ is an acyclic group. So the $E^1$-page is concentrated in the first line when $q=0$ and is isomorphic to the chain complex $(\bZ[\text{Conf}_{\bullet+1}(\bR)], d_1)$ which calculates the homology of an infinite simplex. Therefore, the spectral sequence converges to zero in positive degrees. 
\end{proof}
\subsection{Further discussion}\label{Gabai}We end with a question about hyperbolic three manifolds. Let $M$ be closed hyperbolic $3$-manifold. Gabai in \cite{gabai2001smale} used his high powered ``insulator" machinary (see \cite{gabai1997geometric}) and minimal surface theory to prove that $\tH_0(M)$ is contractible by reducing to the case of Haken manifolds with boundary. It would be interesting if the techniques of this paper could prove Gabai's theorem without using high powered tools in geometry. It is desirable to have an argument similar to the case of closed surfaces (Case 2 in the proof of \Cref{surface}). In that case, we used a semisimplicial resolution to reduce to the case of surfaces with boundary. Surfaces with boundary behave like Haken $3$-manifolds. It would be interesting to define a semisimplicial resolution by cutting certain submanifolds, like solid tori, to reduce to the case of Haken $3$-manifolds with boundary.  One candidate for  a semisimplicial resolution for $\BH_0(M)$ could be as follows. Let  $\gamma$ be a closed geodesic in $M$. Fix a parametrized tubular neighborhood of $\gamma$ by embedding $\phi: D^2\times S^1\hookrightarrow M$ so that $\phi(\{(0,0)\}\times S^1)=\gamma$. 
\begin{defn}Let $C_{\bullet}(M)$ be a semisimplicial set whose set of $0$ simplices is given by  oriented closed curves that are isotopic to $\gamma$. We define $C_p(M)$ as a subset of $C_0(M)^{p+1}$ to be the set of $(p+1)$-tuples $\sigma_p=(\gamma_0,\gamma_1,\dots, \gamma_p)$ so that there exists a homeomorphism $f_{\sigma_p}\in \tH_0(M)$ where $f_{\sigma_p}(\gamma_i)=\phi(\{(t_i,0)\}\times S^1)$ for a $t_i$ such that $ t_0<t_1<\dots<t_p$. The $i$-th face maps is given by forgetting the $i$-th curve.
\end{defn}
\begin{quest}
Is $|C_{\bullet}(M)|$ weakly contractible?
\end{quest}
We need  to show that realization of the semi-simplicial set $C_{\bullet}(M)$ is contractible. If the answer to this question is affirmative, one could give a simpler proof of Gabai's theorem as follows: Consider the semisimplicial resolution
\[
C_{\bullet}(M)\hcoker \tdH_0(M)\to \BdH_0(M).
\]
Since the action of $\tdH_0(M)$ on $C_{\bullet}(M)$ is transitive, for a $p$-simplex $\sigma_p$ in $C_p(M)$, we have $C_p(M)\hcoker \tdH_0(M)\simeq \mathrm{B}\text{Stab}(\sigma_p)$. Given that the complement of $\sigma_p$ in $M$ is a Haken manifold, the identity component of $\text{Stab}(\sigma_p)$ is contractible, therefore $ \mathrm{B}\text{Stab}(\sigma_p)\simeq  \mathrm{B}\pi_0(\text{Stab}(\sigma_p))$. On the other hand, using JSJ decomposition and some hyperbolic geometry, it is not hard to show that $\pi_0(\text{Stab}(\sigma_p))$ is isomorphic to the pure braid group $\text{PBr}_{p+1}$. Hence, one might have a spectral sequence
\[
E^1_{p,q}=H_q(\mathrm{B}\text{PBr}_{p+1})\Rightarrow H_{p+q}(\BdH_0(M);\bZ),
\]
but recall that a model for $\mathrm{B}\text{PBr}_{p+1}$ is an ordered configuration space $\text{Emb}([p], D^2)$. Thus the above spectral sequence converges to the realization of the semi simplicial space $\text{Emb}([\bullet], D^2)$. Now from \Cref{claim1} we know that the realization of $\text{Emb}([\bullet], D^2)$ is weakly contractible, therefore the above spectral sequence converges to zero in positive degrees.  

 \bibliographystyle{alpha}
\bibliography{reference}

\newcommand{\etalchar}[1]{$^{#1}$}
\begin{thebibliography}{\v{C}69}

\bibitem[Arm70]{MR0266221}
M.~A. Armstrong.
\newblock Collars and concordances of topological manifolds.
\newblock {\em Comment. Math. Helv.}, 45:119--128, 1970.

\bibitem[Ban97]{MR1445290}
Augustin Banyaga.
\newblock {\em The structure of classical diffeomorphism groups}, volume 400 of
  {\em Mathematics and its Applications}.
\newblock Kluwer Academic Publishers Group, Dordrecht, 1997.

\bibitem[BL74]{burghelea1974homotopy}
Dan Burghelea and Richard Lashof.
\newblock The homotopy type of the space of diffeomorphisms. i.
\newblock {\em Transactions of the American Mathematical Society}, 196:1--36,
  1974.

\bibitem[Bro62]{brown1962locally}
Morton Brown.
\newblock Locally flat imbeddings of topological manifolds.
\newblock {\em Annals of Mathematics}, pages 331--341, 1962.

\bibitem[Cer61]{cerf1961topologie}
Jean Cerf.
\newblock Topologie de certains espaces de plongements.
\newblock {\em Bulletin de la Soci{\'e}t{\'e} Math{\'e}matique de France},
  89:227--380, 1961.

\bibitem[EE69]{earle1969fibre}
Clifford~J Earle and James Eells.
\newblock A fibre bundle description of teichm{\"u}ller theory.
\newblock {\em Journal of Differential Geometry}, 3(1-2):19--43, 1969.

\bibitem[ERW17]{ebert2017semi}
Johannes Ebert and Oscar Randal-Williams.
\newblock Semi-simplicial spaces.
\newblock {\em arXiv preprint arXiv:1705.03774}, 2017.

\bibitem[ES70]{MR0277000}
C.~J. Earle and A.~Schatz.
\newblock Teichm\"uller theory for surfaces with boundary.
\newblock {\em J. Differential Geometry}, 4:169--185, 1970.

\bibitem[FM11]{farb2011primer}
Benson Farb and Dan Margalit.
\newblock {\em A Primer on Mapping Class Groups (PMS-49)}.
\newblock Princeton University Press, 2011.

\bibitem[Gab97]{gabai1997geometric}
David Gabai.
\newblock On the geometric and topological rigidity of hyperbolic 3-manifolds.
\newblock {\em Journal of the American Mathematical Society}, 10(1):37--74,
  1997.

\bibitem[Gab01]{gabai2001smale}
David Gabai.
\newblock The {S}male conjecture for hyperbolic 3-manifolds: {$\text{Isom}
  (M^3)\simeq \text{Diff}(M^3)$}.
\newblock {\em Journal of Differential Geometry}, 58(1):113--149, 2001.

\bibitem[GRW17]{galatius2014homological}
S{\o}ren Galatius and Oscar Randal-Williams.
\newblock Homological stability for moduli spaces of high dimensional
  manifolds. {I}.
\newblock {\em Journal of the American Mathematical Society}, 2017.

\bibitem[Hae71]{haefliger1971homotopy}
Andr{\'e} Haefliger.
\newblock Homotopy and integrability.
\newblock In {\em Manifolds-Amsterdam 1970}, pages 133--163. Springer, 1971.

\bibitem[Hak62]{MR0160196}
Wolfgang Haken.
\newblock \"uber das {H}om\"oomorphieproblem der 3-{M}annigfaltigkeiten. {I}.
\newblock {\em Math. Z.}, 80:89--120, 1962.

\bibitem[Ham74]{hamstrom1974homotopy}
Mary-Elizabeth Hamstrom.
\newblock Homotopy in homeomorphism spaces, {$ TOP $} and {$ PL$}.
\newblock {\em Bulletin of the American Mathematical Society}, 80(2):207--230,
  1974.

\bibitem[Hat]{hatcher2000notes}
Allen Hatcher.
\newblock Notes on basic 3-manifold topology.
\newblock https://www.math.cornell.edu/~hatcher/3M/3M.pdf.

\bibitem[Hat76]{MR0420620}
Allen Hatcher.
\newblock Homeomorphisms of sufficiently large {$P^{2}$}-irreducible
  {$3$}-manifolds.
\newblock {\em Topology}, 15(4):343--347, 1976.

\bibitem[Hat81]{MR624946}
A.~Hatcher.
\newblock On the diffeomorphism group of {$S^{1}\times S^{2}$}.
\newblock {\em Proc. Amer. Math. Soc.}, 83(2):427--430, 1981.

\bibitem[Hat83]{hatcher1983proof}
Allen~E Hatcher.
\newblock A proof of the {S}male conjecture, {{$\text{Diff}(S^3)\simeq O(4)$}}.
\newblock {\em Annals of Mathematics}, pages 553--607, 1983.

\bibitem[Hat99]{hatcher1999spaces}
Allen Hatcher.
\newblock Spaces of incompressible surfaces.
\newblock {\em arXiv preprint math/9906074}, 1999.

\bibitem[Jek12]{MR2871163}
Solomon Jekel.
\newblock Powers of the {E}uler class.
\newblock {\em Adv. Math.}, 229(3):1949--1975, 2012.

\bibitem[Kir69]{MR0242165}
Robion~C. Kirby.
\newblock Stable homeomorphisms and the annulus conjecture.
\newblock {\em Ann. of Math. (2)}, 89:575--582, 1969.

\bibitem[KS77]{MR0645390}
Robion~C. Kirby and Laurence~C. Siebenmann.
\newblock {\em Foundational essays on topological manifolds, smoothings, and
  triangulations}.
\newblock Princeton University Press, Princeton, N.J.; University of Tokyo
  Press, Tokyo, 1977.
\newblock With notes by John Milnor and Michael Atiyah, Annals of Mathematics
  Studies, No. 88.

\bibitem[Kup15]{kupers2015proving}
Alexander Kupers.
\newblock Proving homological stability for homeomorphisms of manifolds.
\newblock {\em arXiv preprint arXiv:1510.02456}, 2015.

\bibitem[Las76]{lashof1976}
R.~Lashof.
\newblock Embedding spaces.
\newblock {\em Illinois J. Math.}, 20(1):144--154, 03 1976.

\bibitem[Lau74]{MR0356056}
Fran{\c{c}}ois Laudenbach.
\newblock {\em Topologie de la dimension trois: homotopie et isotopie}.
\newblock Soci\'et\'e Math\'ematique de France, Paris, 1974.
\newblock With an English summary and table of contents, Ast{\'e}risque, No.
  12.

\bibitem[Mat71]{MR0288777}
John~N. Mather.
\newblock The vanishing of the homology of certain groups of homeomorphisms.
\newblock {\em Topology}, 10:297--298, 1971.

\bibitem[Mat73]{MR0356085}
John~N. Mather.
\newblock Integrability in codimension {$1$}.
\newblock {\em Comment. Math. Helv.}, 48:195--233, 1973.

\bibitem[McC63]{mccarty1963homeotopy}
George~Shultz McCarty.
\newblock Homeotopy groups.
\newblock {\em Transactions of the American Mathematical Society},
  106(2):293--304, 1963.

\bibitem[McD80]{mcduff1980homology}
Dusa McDuff.
\newblock The homology of some groups of diffeomorphisms.
\newblock {\em Commentarii Mathematici Helvetici}, 55(1):97--129, 1980.

\bibitem[Mil57]{MR0084138}
John Milnor.
\newblock The geometric realization of a semi-simplicial complex.
\newblock {\em Ann. of Math. (2)}, 65:357--362, 1957.

\bibitem[Nar17]{nariman2014homologicalstability}
Sam Nariman.
\newblock Homological stability and stable moduli of flat manifold bundles.
\newblock {\em Advances in Mathematics}, 320:1227--1268, 2017.

\bibitem[Per02]{perelman2002entropy}
Grisha Perelman.
\newblock The entropy formula for the {R}icci flow and its geometric
  applications.
\newblock {\em arXiv preprint math/0211159}, 2002.

\bibitem[Per03]{perelman2003ricci}
Grisha Perelman.
\newblock Ricci flow with surgery on three-manifolds.
\newblock {\em arXiv preprint math/0303109}, 2003.

\bibitem[PR99]{paris1999geometric}
Luis Paris and Dale Rolfsen.
\newblock Geometric subgroups of surface braid groups.
\newblock In {\em Annales de l'institut Fourier}, volume~49, pages 417--472,
  1999.

\bibitem[PR00]{MR1752295}
Luis Paris and Dale Rolfsen.
\newblock Geometric subgroups of mapping class groups.
\newblock {\em J. Reine Angew. Math.}, 521:47--83, 2000.

\bibitem[Q{\etalchar{+}}88]{quinn1988topological}
Frank Quinn et~al.
\newblock Topological transversality holds in all dimensions.
\newblock {\em Bulletin (New Series) of the American Mathematical Society},
  18(2):145--148, 1988.

\bibitem[Qui73]{MR0338129}
Daniel Quillen.
\newblock Higher algebraic {$K$}-theory. {I}.
\newblock In {\em Algebraic {$K$}-theory, {I}: {H}igher {$K$}-theories ({P}roc.
  {C}onf., {B}attelle {M}emorial {I}nst., {S}eattle, {W}ash., 1972)}, pages
  85--147. Lecture Notes in Math., Vol. 341. Springer, Berlin, 1973.

\bibitem[RW16]{randal2009resolutions}
Oscar Randal-Williams.
\newblock Resolutions of moduli spaces and homological stability.
\newblock {\em Journal of the European Mathematical Society}, 18:1--81, 2016.

\bibitem[Seg78]{segal1978classifying}
Graeme Segal.
\newblock Classifying spaces related to foliations.
\newblock {\em Topology}, 17(4):367--382, 1978.

\bibitem[Sma59]{MR0112149}
Stephen Smale.
\newblock Diffeomorphisms of the {$2$}-sphere.
\newblock {\em Proc. Amer. Math. Soc.}, 10:621--626, 1959.

\bibitem[Thu74a]{thurston1974foliations}
William Thurston.
\newblock Foliations and groups of diffeomorphisms.
\newblock {\em Bulletin of the American Mathematical Society}, 80(2):304--307,
  1974.

\bibitem[Thu74b]{MR0370619}
William Thurston.
\newblock The theory of foliations of codimension greater than one.
\newblock {\em Comment. Math. Helv.}, 49:214--231, 1974.

\bibitem[Thu76]{MR0425985}
W.~P. Thurston.
\newblock Existence of codimension-one foliations.
\newblock {\em Ann. of Math. (2)}, 104(2):249--268, 1976.

\bibitem[\v{C}69]{MR0259925}
A.~V. \v{C}ernavski\u{\i}.
\newblock Local contractibility of the group of homeomorphisms of a manifold.
\newblock {\em Mat. Sb. (N.S.)}, 79 (121):307--356, 1969.

\bibitem[Wei05]{weiss2005does}
Michael Weiss.
\newblock What does the classifying space of a category classify?
\newblock {\em Homology Homotopy Appl}, 7(1):185--195, 2005.

\end{thebibliography}
\end{document}